\setlist[enumerate]{topsep=3pt, parsep=0pt, partopsep=0pt, itemsep=3pt, leftmargin=36pt, label=\rm{(\roman*)}}
\numberwithin{equation}{section}
\newtheorem{theorem}{Theorem}[section]
\newtheorem{lemma}[theorem]{Lemma}
\newtheorem{proposition}[theorem]{Proposition}
\theoremstyle{definition}
\theoremstyle{remark}
\newtheorem{remark}[theorem]{Remark}
\newtheorem*{remark*}{Remark}
\renewcommand{\r}{\right}
\newcommand{\eps}{\varepsilon}
\newcommand{\R}{{\mathbb R}}
\newcommand{\C}{{\mathbb C}}
\renewcommand{\Re}{\operatorname{Re}}
\renewcommand{\Im}{\operatorname{Im}}
\newcommand{\pt}{\partial}
\newcommand{\cleq}{\lesssim}
\newcommand{\cgeq}{\gtrsim}
\newcommand{\wto}{\rightharpoonup}
\def\tbra[#1,#2]{\left\langle #1 , #2\right\rangle} 
\def\rbra[#1,#2]{\left( #1 , #2 \right)} 
\newcommand{\ce}{\mathrel{\mathop:}=}
\def\norm[#1]{\left\Vert #1 \right\Vert}
\def\abs[#1]{\left\vert #1 \right\vert}
\newcommand{\supp}{\operatorname{supp}}
\newcommand{\sgn}{\operatorname{sgn}}
\newcommand{\scF}{{\mathscr F}}
\newcommand{\cE}{{\mathcal E}}
\title[]{The Cauchy problem for the logarithmic Schr\"odinger equation revisited}
\author[M. Hayashi]{Masayuki Hayashi}
\address{Dipartimento di Matematica, Universit\`a di Pisa, Largo Bruno Pontecorvo, 5 56127 Pisa, Italy
\newline\indent
Waseda Research Institute for Science and Engineering, Waseda University, Tokyo 169-8555, Japan
}
 \email{masayuki.hayashi@dm.unipi.it}
 \author[T. Ozawa]{Tohru Ozawa}
\address{Department of Applied Physics, Waseda University, Tokyo 169-8555, Japan}
\email{txozawa@waseda.jp}
\begin{document}

\maketitle
\setcounter{tocdepth}{1}

\begin{abstract}
We revisit the Cauchy problem for the logarithmic Schr\"odinger equation and construct strong solutions in $H^1$, the energy space, and the $H^2$-energy space. The solutions are provided in a constructive way, which does not rely on compactness arguments, that a sequence of approximate solutions forms a Cauchy sequence in a complete function space and then actual convergence is shown to be in a strong sense.
\end{abstract}

\tableofcontents

\section{Introduction}

We consider the logarithmic Schr\"odinger equation
\begin{align}
\label{eq:1.1}
\left\{
\begin{aligned}
&i\pt_t u+\Delta u+\lambda u\log (|u|^2)=0,
\\
& u(0,x)=\varphi(x),
\end{aligned}
\quad(t,x)\in\R\times\R^d,~\lambda\in\R\setminus\{0\}.
\r.
\end{align}
This model was first introduced in \cite{BM76} and later found to be suitable for describing various physical phenomena \cite{Hef85, KEB00, Z10, AZ11} (mainly the case $\lambda>0$ is of physical interests).  
The study of the Cauchy problem for \eqref{eq:1.1} goes back to \cite{CH80} and is discussed in detail later. It is known that the properties of the solution of \eqref{eq:1.1} differ considerably depending on the sign of $\lambda$. For instance, on one hand, when $\lambda>0$ the equation has a non-dispersive structure \cite{C83} and explicit standing waves called Gaussons \cite{BM79, dMS14}, which are shown to be stable \cite{C83, A16}. Multi-Gaussons and multi-breathers are also studied numerically and rigorously \cite{BCST19b, F20, F21}. On the other hand, when $\lambda<0$ it is shown in \cite{CG18} that \eqref{eq:1.1} has an interesting dispersive structure that is very different from nonlinear Schr\"odinger equation with standard power nonlinearities. 
More complete references can be found in the recent survey \cite{C22-survey}. 

In this paper we revisit the Cauchy problem for \eqref{eq:1.1}. The energy for this equation is given by
\begin{align}
\label{eq:1.2}%
\cE (u)=\frac{1}{2}\int|\nabla u|^2 -\frac{\lambda}{2}\int |u|^2\log(|u|^2).
\end{align}
The integrand with logarithmic functions has an indeterminate sign, which changes at $|u|=1$. If we rewrite
\begin{align*}
-\frac{\lambda}{2}\int |u|^2\log(|u|^2)=-\frac{\lambda}{2}\left(\int_{|u|\le1}+\int_{|u|\ge1} \r)|u|^2\log(|u|^2),
\end{align*}
then the tricky term is 
\begin{align}
\label{eq:1.3}
-\frac{\lambda}{2}\int_{|u|\le1}|u|^2\log(|u|^2),
\end{align}
which cannot be controlled only for $u\in H^1(\R^d)$. 
%
The main difficulty in the Cauchy problem for \eqref{eq:1.1} is that the nonlinear term has a singularity at the origin and breaks the local Lipschitz continuity.
On one hand, the nonlinear term is roughly estimated as 
\begin{align}
\label{eq:1.4}
\abs[u\log(|u|^2) ]\cleq  |u|^{1-\delta}+|u|^{1+\delta} 
\end{align}
for any $\delta\in(0,1)$, and the problem arises that if $u\in H^1(\R^d)$, the first term on the right-hand side of \eqref{eq:1.4} does not in general belong to any $L^p(\R^d)$ for $p\le 2$, nor to $H^{-1}(\R^d)$.
On the other hand, the inequality
\begin{align}
\label{eq:1.5}
\bigl| \Im ( u\log |u|-v\log |v| )(\overline{u}-\overline{v})\bigr|
\le |u-v|^2\quad\text{for all}~u,v\in\C
\end{align}
is known to hold (see \cite[Lemme 1.1.1]{CH80}) and this helps overcome the lack of the local Lipschitz continuity of the nonlinear term.

The energy \eqref{eq:1.2} is well-defined in the subset of $H^1(\R^d)$
\begin{align}
\label{eq:1.6}
W_1\ce\left\{ \varphi\in H^1(\R^d) :|\varphi|^2\log(|\varphi|^2)\in L^1(\R^d) \r\},
\end{align}
which is known to be represented as a Banach space using Orlicz spaces (see \cite{C83} and Appendix \ref{sec:B}). 
The Cauchy problem for \eqref{eq:1.1} in the energy space $W_1$
was first investigated in \cite{CH80} and it was proved that if $\lambda>0$ and $u_0\in W_1$, then there exists a unique solution $u\in C(\R, W_1)$ to \eqref{eq:1.1} (see \cite{H18} for an alternative proof). In \cite{GLN10} the authors studied \eqref{eq:1.1} including the case $\lambda<0$ and they proved that if $\varphi\in H^1(\R^3)\cap\scF(H^{1/2})$, where 
\begin{align*}
\scF(H^\alpha)\ce\left\{  \varphi\in L^2(\R^d) : x\mapsto\braket{x}^\alpha\varphi(x)\in L^2(\R^d) \r\}
\end{align*} 
for $\alpha>0$ and $\braket{x}\ce (1+|x|^2)^{1/2}$, then there exists a unique solution in $L^\infty_{\rm loc}(\R,H^1(\R^3))$.
Later, this existence result in weighted Sobolev spaces is improved in \cite{CG18} and it was proved that if $\varphi\in H^m(\R^d)\cap\scF(H^\alpha)$ with $m=1,2$ and $\alpha\in(0,1]$, then there exists a unique solution $u\in L^\infty_{\rm loc}(\R, H^m(\R^d)\cap\scF(H^\alpha) )$. The space $H^1(\R^d)\cap\scF(H^\alpha)$ enables us to control both \eqref{eq:1.3} and the first term on the right-hand side of \eqref{eq:1.4}. Note, however, that this space is strictly narrower than the energy space (see \eqref{eq:1.8}).

Regarding the construction of solutions to \eqref{eq:1.1}, in most previous works compactness arguments are used probably due to the singularity of the nonlinear term. 
Here, as with \cite{H18}, we construct solutions in a more constructive way that does not rely on compactness arguments. Specifically, we construct solutions by showing that a sequence of approximate solutions forms a Cauchy sequence in a complete function space. The advantage of this approach is that the strong convergence can be obtained directly without going through the weak convergence and without taking subsequences.
The novelty of this paper is as follows:
\vspace{3pt}
\begin{itemize}
\setlength{\itemsep}{3pt}

\item We adopt the approximation of the nonlinear term used in \cite{BCST19a} to simplify the argument of \cite{H18} and treat the case $\lambda<0$ as well as $\lambda>0$.

\item The strong $H^1$-solution is constructed in a way that does not rely on the conservation law of the energy. 

\item The strong $W_1$-solution is newly constructed for the case $\lambda<0$.



\item The strong solution in the $H^2$-energy space is newly constructed for the case $\lambda>0$.

\end{itemize}
\vspace{3pt}
The $H^2$-energy space is defined by
\begin{align}
\label{eq:1.7}
W_2&=\left\{ \varphi\in H^2(\R^d) : \varphi\log(|\varphi|^2)\in L^2(\R^d) \r\},
\end{align}
which can be represented as a Banach space using the Orlicz space in the same way as $W_1$. 
We note that the inclusion relation in function spaces
\begin{align}
\label{eq:1.8}
\begin{aligned}
&H^1(\R^d)\cap\scF(H^\alpha)\subsetneq W_1\subsetneq H^1(\R^d),
\\
&H^2(\R^d)\cap\scF(H^\alpha)\subsetneq W_2\subsetneq H^2(\R^d)
\end{aligned}
\end{align}
holds for $\alpha\in(0,1]$. One can see from Lemma \ref{lem:B.5} below that the difference in the logarithmic decay order of the functions distinguishes these function spaces. This seemingly slight difference is not a small issue in \eqref{eq:1.1}.

We now state our main results. The first main result is on the construction of strong solutions in $H^1$ and $W_1$.
\begin{theorem}
\label{thm:1.1}
Let $\lambda\in\R\setminus\{0\}$. For any $\varphi\in H^1(\R^d)$, there exists a unique solution $C(\R, H^1(\R^d))$
to \eqref{eq:1.1} in the sense of
\begin{align}
\label{eq:1.9}
i\pt_t u +\Delta u+\lambda u\log(|u|^2)=0\quad\text{in}~H^{-1}(\Omega)
\end{align}
for all bounded open sets $\Omega\subset\R^d$ and all $t\in\R$, and with $u(0)=\varphi$.
If in addition we assume $\varphi\in W_1$, the $H^1$-solution above satisfies $u\in (C\cap L^\infty)(\R,W_1)$ if $\lambda>0$ and $u\in C(\R,W_1)$ if $\lambda<0$. Moreover, the $W_1$-solution $u$ satisfies the equation \eqref{eq:1.9} in the sense of $W_1^*$, where $W_1^*$ is the dual space of $W_1$.
\end{theorem}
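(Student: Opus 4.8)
The plan is to build the $H^1$-solution by the standard Galerkin-type parabolic regularization / nonlinearity-truncation scheme, but arranged so that strong Cauchy convergence replaces compactness. First I would regularize the nonlinearity: following \cite{BCST19a}, replace $z\log(|z|^2)$ by a family $g_\eps(z)$ that is globally Lipschitz on $\C$, agrees with the original nonlinearity away from the origin and for $|z|$ not too large, and satisfies the key monotonicity-type bound $\bigl|\Im\,(g_\eps(u)-g_\eps(v))(\ol u-\ol v)\bigr|\le C|u-v|^2$ uniformly in $\eps$, together with the one-sided/energy estimates that make the energy $\cE_\eps$ of the regularized flow bounded on bounded sets of $H^1$. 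For each fixed $\eps>0$ the truncated equation $i\pt_t u_\eps+\Delta u_\eps+\lambda g_\eps(u_\eps)=0$ has a unique global solution $u_\eps\in C(\R,H^1(\R^d))$ by the usual contraction-mapping argument using the Strichartz or the $L^2$/$H^1$ energy estimates (the Lipschitz constant is finite, so this is routine), with $\norm[u_\eps(t)]_{L^2}=\norm[\varphi]_{L^2}$ and an $H^1$-bound uniform in $\eps$ and locally uniform in $t$ coming from the (approximate) energy conservation together with \eqref{eq:1.3}-type control of the sign-indefinite part.

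The core of the argument is the Cauchy estimate. For $\eps,\eps'>0$ set $w=u_\eps-u_{\eps'}$; testing the difference equation against $w$ and taking imaginary parts, the Laplacian term drops and we are left with
\begin{align*}
\frac{1}{2}\frac{d}{dt}\norm[w(t)]_{L^2}^2
=\lambda\,\Im\int\bigl(g_\eps(u_\eps)-g_{\eps'}(u_{\eps'})\bigr)\ol w.
\end{align*}
I would split the right-hand side as $\lambda\,\Im\int(g_\eps(u_\eps)-g_\eps(u_{\eps'}))\ol w+\lambda\,\Im\int(g_\eps(u_{\eps'})-g_{\eps'}(u_{\eps'}))\ol w$; the first piece is $\le C|\lambda|\norm[w]_{L^2}^2$ by the uniform inequality \eqref{eq:1.5} for $g_\eps$ (this is exactly the substitute for Lipschitz continuity), and the second piece is controlled by the rate at which $g_\eps\to g_{\eps'}$ in, say, $L^1+L^2$ on the (uniformly bounded in $H^1$, hence in $L^{2^*}$) orbit, which tends to $0$ as $\eps,\eps'\to 0$. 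Grönwall then gives $\sup_{|t|\le T}\norm[u_\eps(t)-u_{\eps'}(t)]_{L^2}\to 0$, so $(u_\eps)$ is Cauchy in $C([-T,T],L^2)$; interpolating with the uniform $H^1$-bound yields convergence in $C([-T,T],H^s)$ for every $s<1$, and weak-$*$ convergence in $L^\infty(-T,T;H^1)$. This is enough to pass to the limit in the equation tested against $H^1_0(\Omega)$ for bounded $\Omega$: the linear terms pass by the strong $L^2_{\mathrm{loc}}$ convergence, and $g_\eps(u_\eps)\to u\log(|u|^2)$ in $L^1_{\mathrm{loc}}$ (dominated convergence using \eqref{eq:1.4} with the $L^{2^*}$-bound), giving \eqref{eq:1.9}; the initial condition is preserved by continuity, and strong continuity $u\in C(\R,H^1)$ comes from the standard "weak continuity + norm upper semicontinuity via the energy, then $L^2$-continuity" bootstrap, or more cleanly by redoing the Cauchy estimate to show $t\mapsto u(t)$ is actually the $C(\R,H^1)$-limit. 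Uniqueness in $C(\R,H^1)$ is immediate from \eqref{eq:1.5}: two solutions $u,v$ satisfy $\frac{d}{dt}\norm[u-v]_{L^2}^2\le 2|\lambda|\norm[u-v]_{L^2}^2$, hence coincide.

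For the $W_1$-part, assume in addition $\varphi\in W_1$. When $\lambda>0$ the term $-\tfrac{\lambda}{2}\int|u|^2\log|u|^2$ that enters $\cE$ is, after moving the good $\int_{|u|\ge1}$ piece to the left, bounded above, so the energy identity for the regularized flow gives a uniform-in-$\eps$ bound on the Orlicz norm controlling \eqref{eq:1.3}, i.e. a uniform bound in $W_1$; lower semicontinuity of the Orlicz norm under the already-established convergence then places $u(t)\in W_1$ with $\sup_t\norm[u(t)]_{W_1}<\infty$, and continuity in the weak topology of $W_1$ plus conservation of $\cE$ upgrades this to $u\in(C\cap L^\infty)(\R,W_1)$; testing against $W_1$ and using that $u\log|u|^2\in W_1^*$ (Appendix \ref{sec:B}) gives \eqref{eq:1.9} in $W_1^*$. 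When $\lambda<0$ the sign is unfavourable, so one cannot get an a priori bound from the energy alone; instead I would run the Cauchy estimate directly at the level of the $W_1$-structure — propagate the quantity $\int A(|u_\eps(t)|^2)$ (with $A$ the relevant Orlicz function) using the equation and a commutator/chain-rule computation, showing it stays finite on any bounded time interval and that $(u_\eps)$ is Cauchy in the Orlicz norm — which yields $u\in C(\R,W_1)$ but not the uniform-in-time bound (hence only $C$, not $C\cap L^\infty$). \textbf{The main obstacle} is precisely this $\lambda<0$ case: controlling the logarithmic-decay quantity \eqref{eq:1.3} along the flow without the energy's help, and doing so uniformly in the regularization parameter so that the Cauchy argument closes; this is where the precise form of the \cite{BCST19a} approximation and the sharp inequality \eqref{eq:1.5} must be exploited most carefully.
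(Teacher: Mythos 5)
There are genuine gaps. The most serious one is the source of the uniform-in-$\eps$ bounds. You derive the $H^1$ bound for $u_\eps$ from ``(approximate) energy conservation together with \eqref{eq:1.3}-type control of the sign-indefinite part,'' but the theorem is stated for $\varphi\in H^1(\R^d)$ only: the quantity \eqref{eq:1.3} is exactly what cannot be controlled by the $H^1$ norm, and for $\varphi\notin W_1$ the regularized energies $E_\eps(\varphi)$ need not stay bounded as $\eps\downarrow0$ (and for $\lambda<0$ the energy does not control $\norm[\nabla u]_{L^2}$ even for $W_1$ data). The paper avoids the energy entirely at this stage: Lemma \ref{lem:2.4} gives $\norm[\nabla u_\eps(t)]_{L^2}^2\le e^{4|\lambda||t|}\norm[\nabla\varphi]_{L^2}^2$ by a direct Gronwall argument using only $\abs[\nabla|u_\eps|]\le\abs[\nabla u_\eps]$, valid for both signs of $\lambda$; the same bound (not the energy) is what later yields strong $H^1$-continuity via weak lower semicontinuity plus the upper bound \eqref{eq:2.6} (Lemma \ref{lem:2.11}, the Kato--Lai device). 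Your ``norm upper semicontinuity via the energy'' step inherits the same defect. A second gap is the Cauchy estimate itself: the cross term $\lambda\Im\int(g_\eps(u_{\eps'})-g_{\eps'}(u_{\eps'}))\overline{w}$ is pointwise of size $\abs[\eps-\eps']\,\abs[w]$ (this is the content of \eqref{eq:2.8}), so globally in space it is an $L^1$-type quantity, and $w=u_\eps-u_{\eps'}$ is only in $L^2(\R^d)$; no global Lebesgue norm of $g_\eps-g_{\eps'}$ along an $H^1$ orbit is available, for the same reason the term $|u|^{1-\delta}$ in \eqref{eq:1.4} lies in no global $L^p$. Hence your claimed Cauchy property in $C([-T,T],L^2(\R^d))$ does not follow as stated. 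The paper localizes with a cutoff $\zeta_R$, obtains \eqref{eq:2.9} and hence a Cauchy sequence only in $C_T(L^2_{\rm loc})$, and recovers global $L^2$ convergence a posteriori from $L^2$-conservation and uniqueness.

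For the $W_1$ part with $\lambda<0$ you correctly identify the obstacle but do not resolve it; the sketch of propagating an Orlicz quantity ``by a commutator/chain-rule computation'' and proving Cauchyness in the Orlicz norm is not carried out and proves more than is needed. The paper's resolution (Proposition \ref{prop:2.15}) is simpler: use the conserved $\eps$-energy to solve for the bad term $\int|F_{1\eps}(u_\eps)|$, bound the remaining terms by the energy-independent $H^1$ bound of Lemma \ref{lem:2.4}, pass to the limit by Fatou to get $u\in L^\infty_{\rm loc}(\R,W_1)$, and obtain continuity at $t=0$ by combining the explicit upper bound $\tfrac12 e^{4|\lambda||t|}\norm[\nabla\varphi]_{L^2}^2$ with Fatou's lower semicontinuity of $t\mapsto\int|F_1(u(t))|$. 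Note also that the paper does not claim energy conservation in $W_1$ when $\lambda<0$ (Remark \ref{rem:2.17}), so any route through conservation of $\cE$ in that case would be claiming more than is known. The $\lambda>0$ part of your argument is essentially the paper's.
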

The strong $H^1$-solution to \eqref{eq:1.1} can be constructed for the initial data $\varphi\in H^1(\R^d)$ (not assuming $\varphi\in W_1$), which seems to have not been pointed out in previous works. We use the idea of \cite[Remarks (c)]{KL84} to construct strong solutions in $H^1$ and $W_1$. More specifically,  the lower semicontinuity with respect to the time variable is obtained by the construction of weak solutions, and the upper semicontinuity is obtained by energy inequalities. This argument can also be applied to  regularity issues in $W_2$ (but more delicate). 

The second main result is on the construction of strong solutions in $W_2$.
\begin{theorem}
\label{thm:1.2}
Let $\lambda\in\R\setminus\{0\}$. For any $\varphi\in W_2$, there exists a unique solution 
\begin{gather*}
u\in W^{1,\infty}_{\rm loc}(\R, L^2(\R^d)),~
\pt_t u\in C_w(\R, L^2_{\rm loc}(\R^d)),
\\
\Delta u \in (C_w\cap L^\infty_{\rm loc})(\R, L^2_{\rm loc}(\R^d)),
\end{gather*}
to \eqref{eq:1.1} in the sense of
\begin{align}
\label{eq:1.10}
i\pt_t u +\Delta u+\lambda u\log(|u|^2)=0\quad\text{in}~L^2(\Omega)
\end{align}
for all bounded open sets $\Omega\subset\R^d$ and a.e. $t\in\R$, with $u(0)=\varphi$.
Moreover, when $\lambda>0$, $u\in C(\R, W_2)$ and \eqref{eq:1.10} holds in $L^2(\R^d)$ and for all $t\in\R$.
\end{theorem}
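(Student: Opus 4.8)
The plan is to construct the $W_2$-solution by the same approximation-and-Cauchy-sequence philosophy used for Theorem~\ref{thm:1.1}, working at one higher level of regularity. Following the $\lambda$-sign dichotomy already in play, I would first regularize the nonlinearity: replace $u\log(|u|^2)$ by a family $f_\eps(u)$ (adopting the $\eps$-regularization of \cite{BCST19a}, e.g.\ $f_\eps(u)=u\log\bigl((|u|^2+\eps)/(1+\eps|u|^2)\bigr)$ or similar) which is globally Lipschitz and $C^1$ away from degeneracy, so that the approximate equation $i\pt_t u_\eps+\Delta u_\eps+\lambda f_\eps(u_\eps)=0$ with data $\varphi\in W_2$ has a unique global solution in $C(\R,H^2)$ by standard semigroup/contraction theory (the $H^2$-bound comes from commuting $\Delta$ through the equation and using that $f_\eps$ maps $H^2\to H^2$ with at-most-logarithmic constants). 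The key \emph{a priori} estimates are: (i) an $L^2$ bound and the energy-type bound that keeps $\cE_\eps(u_\eps)$, hence $\|\nabla u_\eps\|_{L^2}$ and $\|u_\eps\log|u_\eps|^2\|$-type quantities, controlled uniformly in $\eps$ (this is where the Orlicz-space machinery of Appendix~\ref{sec:B} and the sign structure enter, cleanly for $\lambda>0$, more delicately for $\lambda<0$ where one leans on the analogue of \eqref{eq:1.3} being absorbable); and (ii) a second-order bound obtained by testing the differentiated equation, i.e.\ estimating $\tfrac{d}{dt}\|\Delta u_\eps\|_{L^2}^2$ using $\Delta f_\eps(u_\eps)=f_\eps'(u_\eps)\Delta u_\eps+(\text{lower order})$ and the crucial observation that $\Re\bigl(\overline{i^{-1}\Delta u}\,f_\eps'(u)\Delta u\bigr)$ has a sign or is bounded thanks to an inequality in the spirit of \eqref{eq:1.5} applied at the level of derivatives.

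Next I would show the approximating family is Cauchy. For two parameters $\eps,\eps'$, set $w=u_\eps-u_{\eps'}$; testing the difference equation against $\overline{w}$ and invoking the uniform analogue of \eqref{eq:1.5} for $f_\eps$ (with an error term measuring $|f_\eps-f_{\eps'}|$ that vanishes as $\eps,\eps'\to0$ on bounded sets, controlled by the uniform $H^2\hookrightarrow L^\infty$ bound in low dimension or by local arguments in general $d$) yields a Gronwall estimate giving $w\to0$ in $C_{\rm loc}(\R,L^2)$. Interpolating this $L^2$-convergence against the uniform $H^2$-bound from step (ii) upgrades convergence to $C_{\rm loc}(\R,H^s)$ for every $s<2$, which is enough to pass to the limit in the \emph{nonlinear} term (using that $u\log|u|^2$ is continuous $H^s_{\rm loc}\to L^2_{\rm loc}$ for $s$ close to $2$, via \eqref{eq:1.4} and Sobolev embedding) and in the linear terms. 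This produces the limit $u$ with $\Delta u\in L^\infty_{\rm loc}(\R,L^2_{\rm loc})$ and $\pt_t u\in L^\infty_{\rm loc}(\R,L^2)$, solving \eqref{eq:1.10} in $L^2(\Omega)$ for a.e.\ $t$; weak continuity $\pt_t u,\Delta u\in C_w$ follows from the uniform bounds plus the equation (a standard Strauss-type lemma: bounded in a reflexive space plus continuous into a weaker space implies weakly continuous).

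For the final, sharper claim when $\lambda>0$ — namely $u\in C(\R,W_2)$ and \eqref{eq:1.10} in all of $L^2(\R^d)$ for \emph{all} $t$ — I would run the semicontinuity argument flagged after Theorem~\ref{thm:1.1} (the idea from \cite[Remarks~(c)]{KL84}), now at the $W_2$ level. Weak continuity already gives $\liminf_{s\to t}\|u(s)\|_{W_2}\ge\|u(t)\|_{W_2}$ (lower semicontinuity of the norm / of the relevant Orlicz functional under weak convergence). For the reverse inequality one needs an energy-type identity at second order: the quantity $\|\Delta u(t)\|_{L^2}^2$ plus the appropriate logarithmic correction should be shown non-increasing-from-both-sides, i.e.\ conserved, using the differentiated equation together with the conservation of mass and energy; for $\lambda>0$ the $L^1\log L$ terms have the favorable sign that makes this functional coercive and the identity rigorous. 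Combining upper and lower semicontinuity yields norm-continuity, hence (with weak continuity) strong continuity in $W_2$, and then the equation holds globally in $L^2(\R^d)$.

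The main obstacle I expect is step (ii), the uniform second-order \emph{a priori} estimate: unlike \eqref{eq:1.5}, which tames the \emph{first-order} (energy-space) lack of Lipschitz continuity, at the $H^2$-level one differentiates $u\log|u|^2$ twice and meets terms like $(\Delta u)\cdot(\text{singular factor})$ and $|\nabla u|^2/|u|$-type expressions that are genuinely singular where $u$ vanishes. The resolution must exploit that the \emph{real part} of the critical pairing $\Re\bigl(\overline{-i\Delta u}\,\lambda\Delta(u\log|u|^2)\bigr)$ enjoys a cancellation — morally $\lambda\,\Re\bigl(\overline{\Delta u}\,(\log|u|^2)\Delta u\bigr)+\lambda|\nabla(|u|)|^2$-type terms that, after integration by parts, recombine into something sign-definite or bounded by $\|\Delta u\|_{L^2}^2$ times a logarithmic factor — so that Gronwall closes; the delicacy (already advertised in the paper as ``more delicate'') is doing this rigorously at the level of the $\eps$-regularized $f_\eps$ and checking the bound is uniform as $\eps\to0$. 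This is precisely why the statement only asserts $C_w$/$L^\infty_{\rm loc}$ regularity for general $\lambda$ and upgrades to $C(\R,W_2)$ only for $\lambda>0$.
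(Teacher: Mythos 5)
Your overall philosophy (regularize, uniform bounds, Cauchy sequence in a weaker norm, then a Kato--Lai semicontinuity upgrade for $\lambda>0$) matches the paper, but the core technical step is not actually carried out, and the route you propose for it would fail. Your step (ii) seeks the second-order a priori bound by differentiating the equation in space and running Gronwall on $\frac{d}{dt}\norm[\Delta u_\eps]_{L^2}^2$; as you yourself note, $\Delta\bigl(u\log(|u|^2)\bigr)$ produces terms of the type $\nabla u\cdot\nabla|u|/|u|$ and $u\,\Delta\log(|u|^2)$ which are genuinely singular where $u$ vanishes, and the only cancellation available at that level is that $\Im\rbra[(\log|u|^2)\Delta u,\Delta u]=0$; the remaining singular terms are not bounded by $\norm[\Delta u_\eps]_{L^2}^2$ times anything controlled, and no inequality in the spirit of \eqref{eq:1.5} is exhibited (or known) that tames them. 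Flagging this as ``the main obstacle'' and hoping for a sign-definite recombination is precisely the missing idea: the paper never differentiates the nonlinearity in space. Instead it estimates $\norm[\pt_t u_\eps(t)]_{L^2}$ by a Gronwall argument (Lemma \ref{lem:3.1}), which only needs $\abs[\pt_t|u_\eps|]\le\abs[\pt_t u_\eps]$ and hence meets no singularity, and then recovers $\norm[\Delta u_\eps]_{L^2}$ and $\norm[u_\eps\log(|u_\eps|+\eps)]_{L^2}$ \emph{simultaneously} from the purely algebraic identity \eqref{eq:3.3} obtained by squaring the equation (the $\eps$-version of \eqref{eq:1.11}); there, for $\lambda>0$, the singular piece $-4\lambda\int_{\{|u_\eps|\ \rm small\}}|\nabla u_\eps|^2\log(|u_\eps|+\eps)$ is nonnegative and simply dropped, while the region $|u_\eps|\gtrsim1$ is absorbed via the interpolation inequality \eqref{eq:3.4}. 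This sign observation, not a derivative-level cancellation, is what closes the estimate, and it also explains cleanly why the full $W_2$ bound is restricted to $\lambda>0$.

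Two further points where your plan diverges in a way that matters. First, your limit passage for the nonlinear term relies on ``the uniform $H^2$-bound from step (ii)'', but for $\lambda<0$ no such uniform bound is available (nor claimed in the theorem, which only asserts $\Delta u\in L^\infty_{\rm loc}(\R,L^2_{\rm loc})$); the paper passes to the limit in $g_\eps(u_\eps)$ using only $H^1$-level information and elementary logarithmic inequalities (Lemma \ref{lem:2.6}), and then reads the regularity of $\Delta u$ and $\pt_t u$ off the equation together with the $\pt_t$-bound \eqref{eq:3.1}, so the general-$\lambda$ statement does not need your step (ii) at all. Second, for the upgrade to $C(\R,W_2)$ when $\lambda>0$ you invoke a conserved second-order energy; no such conservation law is established or used. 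The paper instead passes \eqref{eq:3.3} to the limit to get the one-sided inequality \eqref{eq:3.13} (with an exponential factor coming from \eqref{eq:3.1}), proves continuity of $t\mapsto\norm[\nabla|u|(t)]_{L^2}^2$ and lower semicontinuity of the remaining terms, and deduces $\limsup_{t\to0}\norm[\Delta u(t)]_{L^2}^2\le\norm[\Delta\varphi]_{L^2}^2$, which combined with weak continuity gives strong $W_2$-continuity. So the skeleton of your argument is right, but the decisive estimates --- the time-derivative bound plus the squared-equation identity with the $\lambda>0$ sign structure --- are absent, and the substitute you propose does not close.
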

The key to constructing the solution in $W_2$ is the $H^2$-identity
\begin{align}
\label{eq:1.11}%
\norm[\pt_t u]_{L^2}^2=
\begin{aligned}[t]
&\norm[\Delta u]_{L^2}^2-4\lambda\norm[\nabla |u|]_{L^2}^2
\\
&{}~-2\lambda\rbra[\abs[\nabla u]^2,\log(|u|^2)]_{L^2}
+\lambda^2\norm[u\log(|u|^2)]_{L^2}^2,
\end{aligned}
\end{align}
which is formally obtained by the equation \eqref{eq:1.1} and integration by parts.  
As in \cite{CG18}, the left-hand side of \eqref{eq:1.11} can be controlled by the energy inequality of time derivatives. The difficulty in constructing solutions in $W_2$ is that we need to obtain a priori estimates for $\norm[\Delta u]_{L^2}$ and $\norm[u\log(|u|^2)]_{L^2}$ at the same time.
If we rewrite the third term on the right-hand side of \eqref{eq:1.11} as
\begin{align*}
-\lambda\rbra[\abs[\nabla u]^2,\log(|u|^2)]_{L^2}=-\lambda\left(\int_{|u|\le 1}+\int_{|u|\ge 1}\r)|\nabla u|^2\log (|u|^2), 
\end{align*}
the most tricky term is the integral term near the origin. However, when $\lambda>0$, this term can be dropped as a non-negative term, which enables us to obtain the desired a priori estimates.  
It is more difficult to improve the regularity of the solution from $L^\infty_{\rm loc}(\R, W_2)$ to $C(\R,W_2)$. We take advantage of \eqref{eq:1.11} again and it is solved by combining limiting procedures and energy inequalities. 

In constructing solutions in $W_1$ and $W_2$, the sign of $\lambda$ has a sensitive effect on the analysis. We note that this is not seen in the construction of solutions in the weighted Sobolev spaces as in \cite{GLN10, CG18}.

The rest of the paper is organized as follows. In Section \ref{sec:2} we study the Cauchy problem for \eqref{eq:1.1} in $H^1$ and $W_1$, and prove Theorem \ref{thm:1.1}. In Section \ref{sec:3} we study the Cauchy problem in $W_2$ and prove Theorem \ref{thm:1.2}. As with \cite{C03, H18}, our main results still hold for a general domain. For the convenience of the reader, in Section \ref{sec:4} we restate Theorems \ref{thm:1.1} and \ref{thm:1.2} for the case of a general domain.
\subsection*{Notation}
We often write $\int f$ instead of $\int_{\R^d} f(x)dx$. 
The standard scalar product in $L^2(\R^d)=L^2(\R^d,\C)$ is defined by
\begin{align*}
\rbra[f,g]_{L^2}=\int_{\R^d} f(x)\overline{g(x)}dx=\int f\overline{g}.
\end{align*}
We denote by $C(I,X)$ (resp. $C_w(I,X)$) the space of strongly (resp. weakly) continuous functions from $I$ to $X$, where $I\subset\R$ is an interval and $X$ is a Banach space or a Fr\'echet space. We sometimes use the abbreviated notation such as
\begin{align*}
C_T(X)=C([-T,T] ,X),\quad L^\infty_T(X)=L^\infty((-T,T) ,X)\quad\text{for}~T>0.
\end{align*}
We denote by $2^*$ the Sobolev exponent defined by
\begin{align*}
2^*=\left\{
\begin{aligned}
&\frac{2d}{d-2}&&\text{if}~d\ge3,
\\
&\,\infty&&\text{if}~d=1,2.
\end{aligned}
\r.
\end{align*}
We denote by $B_R$ the open ball of radius $R$ with center at the origin of $\R^d$. For open sets $\omega, \Omega\subset\R^d$, we write $\omega\subset\subset\Omega$ if $\bar{\omega}\subset\Omega$ and $\bar{\omega}$ is compact, where $\bar{\omega}$ is the closure of $\omega$ in $\R^d$.

We use $A\cleq B$ to denote the inequality $A\le CB$ for some constant $C>0$. The dependence of $C$ 
is usually clear from the context and we often omit this dependence. We sometimes denote by $C=C(*)$ a constant depending on the quantities appearing in parentheses to clarify the dependence.

\section{The Cauchy problem in $H^1$ and the energy space}
\label{sec:2}

This section is organized as follows. In Section \ref{sec:2.1} we introduce the approximate equation of \eqref{eq:1.1} and the basic properties of approximate solutions. In Section \ref{sec:2.2} we construct weak $H^1$-solutions and study the uniqueness and regularity in Section \ref{sec:2.3}. 
In Section \ref{sec:2.4} we construct solutions in the energy space $W_1$.

\subsection{Approximate problems}
\label{sec:2.1}
We consider the approximate equation
\begin{align}
\label{eq:2.1}
\left\{
\begin{aligned}
&i\pt_t u_\eps+\Delta u_\eps+2\lambda u_\eps\log (|u_\eps|+\eps)=0,
\\
&
u_\eps(0, x)=\varphi(x),
\end{aligned}
\r.
\quad(t,x)\in\R\times\R^d,~\eps>0.
\end{align}
We set 
\begin{align*}
g(u)= 2u\log|u|,\quad g_\eps(u)=2u\log(|u|+\eps)\quad\text{for}~\eps>0. 
\end{align*}
For $s\ge0$ we have
\begin{align*}
\int_0^s g_\eps(\tau)d\tau 
&=\frac{1}{2}s^2\log \left((s+\eps)^2\r)-\frac{1}{2}
\int_0^s\frac{2\tau^2}{\tau+\eps} d\tau.
\end{align*}
We define $G_\eps(u)$ by
\begin{align*}
G_\eps(u)&=\frac{1}{2}\int |u|^2\log\left((|u|+\eps)^2\r)-\frac{1}{2}\int \mu_\eps(|u|)\quad\text{for}~u\in H^1(\R^d),
\end{align*}
where
\begin{align*}
\mu_\eps(s)\ce\int_0^s \frac{2\tau^2}{\tau+\eps}d\tau\quad\text{for}~s\ge0.
\end{align*}
We define $E_\eps(u)$ by 
\begin{align}
\begin{aligned}
 E_\eps(u)&=\frac{1}{2}\int|\nabla u|^2-\lambda G_\eps(u)
 \\
&=\frac{1}{2}\int|\nabla u|^2-\frac{\lambda}{2}\int|u|^2\log\left( (|u|+\eps)^2\r)+\frac{\lambda}{2}\int \mu_\eps(|u|),
\end{aligned}
\label{eq:2.2}
\end{align}
which corresponds to the energy of \eqref{eq:2.1}. It follows from the well-posedness theory (see \cite[Chapter 4]{C03}) that for any $\varphi\in H^1(\R^d)$ there exists a unique solution 
\begin{align*}
u_\eps \in C(\R, H^1(\R^d))\cap C^1(\R, H^{-1}(\R^d))
\end{align*}
to \eqref{eq:2.1}. Moreover, the solution satisfies the conservation laws of the $L^2$-norm
\begin{align}
\label{eq:2.3}
\norm[u_\eps(t)]_{L^2}^2=\norm[\varphi]_{L^2}^2
\end{align}
and of the energy
\begin{align}
\label{eq:2.4}
E_\eps(u_\eps(t))=E_\eps(\varphi)
\end{align}
for all $t\in\R$.
\begin{remark}
\label{rem:2.1}
If we consider \eqref{eq:2.1} in a general domain, the well-posedness of \eqref{eq:2.1} in $H^1$ is proved by applying \cite[Theorem 3.3.9]{C03}, although the compactness arguments are used therein. Alternatively one can also prove this well-posedness of \eqref{eq:2.1} independent of compactness arguments by applying the argument in \cite[Section 2.1]{H18}. Although power type nonlinearities in two dimensions are discussed therein, the argument is applicable to \eqref{eq:2.1} for any dimension thanks to the almost linear growth of $g_\eps$ and the estimate \eqref{eq:2.8}.
\end{remark}
\begin{remark}
\label{rem:2.2}
The energy $E_\eps(u)$ with $\eps=0$ is given by
\begin{align}
\label{eq:2.5}
E(u)=\frac{1}{2}\int|\nabla u|^2-\frac{\lambda}{2}\int|u|^2\log(|u|^2)+\frac{\lambda}{2}\int |u|^2.
\end{align}
We note that \eqref{eq:1.1} is rewritten as the Hamiltonian form $i\pt_t u=E'(u)$, so from this viewpoint $E(u)$ may be more suitable as the energy of \eqref{eq:1.1} than $\cE(u)$.
\end{remark}
\begin{remark}
\label{rem:2.3}
As an approximation for the nonlinear term, the authors in \cite{CG18} adopt $g_\eps(u)=u\log(|u|^2+\eps)$. Our small modification of this approximation is helpful to generalize the inequality \eqref{eq:1.5} to hold for the approximate nonlinearity (see \eqref{eq:2.8}).
\end{remark}

\subsection{Construction of weak $H^1$-solutions}
\label{sec:2.2}
We first derive the uniform estimate of approximate solutions in $H^1$.
\begin{lemma}
\label{lem:2.4}
For all $t\in\R$ we have
\begin{align}
\label{eq:2.6}
\norm[\nabla u_\eps(t)]_{L^2}^2 
\le e^{4\abs[\lambda]\abs[t]}\norm[\nabla\varphi]_{L^2}^2.
\end{align}
\end{lemma}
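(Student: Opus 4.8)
The plan is to obtain the differential inequality for $\norm[\nabla u_\eps(t)]_{L^2}^2$ directly from the approximate equation \eqref{eq:2.1}, bypassing the energy conservation \eqref{eq:2.4} (which is awkward to use here because of the indefinite sign of the logarithmic term). First I would differentiate $\tfrac12\norm[\nabla u_\eps(t)]_{L^2}^2$ in time. Since $u_\eps\in C(\R,H^1)\cap C^1(\R,H^{-1})$ solves \eqref{eq:2.1}, the standard computation gives
\begin{align*}
\frac{d}{dt}\,\frac{1}{2}\norm[\nabla u_\eps(t)]_{L^2}^2
= \Re\rbra[\nabla\pt_t u_\eps,\nabla u_\eps]_{L^2}
= -\Re\rbra[\pt_t u_\eps,\Delta u_\eps]_{L^2}.
\end{align*}
Using the equation to substitute $\Delta u_\eps = -i\pt_t u_\eps - 2\lambda u_\eps\log(|u_\eps|+\eps)$, the first term contributes $-\Re\rbra[\pt_t u_\eps,-i\pt_t u_\eps]_{L^2}=0$, so we are left with
\begin{align*}
\frac{d}{dt}\,\frac{1}{2}\norm[\nabla u_\eps(t)]_{L^2}^2
= 2\lambda\,\Re\rbra[\pt_t u_\eps,\,u_\eps\log(|u_\eps|+\eps)]_{L^2}.
\end{align*}

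The next step is to recognize that $\Re\rbra[\pt_t u_\eps,\,u_\eps\log(|u_\eps|+\eps)]_{L^2} = \tfrac12\frac{d}{dt}\!\int \nu_\eps(|u_\eps|)$ for an appropriate primitive $\nu_\eps$; but this route reintroduces the energy and the sign problem. Instead I would estimate the right-hand side crudely by the Cauchy--Schwarz-type bound together with the almost-linear growth of $g_\eps$. The cleanest way: take the $L^2$ pairing of \eqref{eq:2.1} with $-\Delta u_\eps$, take the imaginary part (rather than the real part of the pairing with $\pt_t u_\eps$), which yields
\begin{align*}
\frac{d}{dt}\,\frac{1}{2}\norm[\nabla u_\eps(t)]_{L^2}^2
= 2\lambda\,\Im\rbra[\Delta u_\eps,\,u_\eps\log(|u_\eps|+\eps)]_{L^2}
= -2\lambda\,\Im\rbra[\nabla u_\eps,\,\nabla\!\bigl(u_\eps\log(|u_\eps|+\eps)\bigr)]_{L^2}.
\end{align*}
Now one computes $\nabla\bigl(u_\eps\log(|u_\eps|+\eps)\bigr) = \log(|u_\eps|+\eps)\nabla u_\eps + u_\eps\,\frac{\nabla|u_\eps|}{|u_\eps|+\eps}$. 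The term $\log(|u_\eps|+\eps)\nabla u_\eps$ pairs with $\nabla u_\eps$ to give something real, hence contributes nothing to the imaginary part. The remaining term satisfies the pointwise bound $\bigl|u_\eps\,\frac{\nabla|u_\eps|}{|u_\eps|+\eps}\bigr|\le |\nabla|u_\eps||\le|\nabla u_\eps|$ a.e., so
\begin{align*}
\abs[\frac{d}{dt}\frac{1}{2}\norm[\nabla u_\eps(t)]_{L^2}^2]
\le 2\abs[\lambda]\,\norm[\nabla u_\eps(t)]_{L^2}
\cdot\norm[\nabla u_\eps(t)]_{L^2}
= 2\abs[\lambda]\,\norm[\nabla u_\eps(t)]_{L^2}^2.
\end{align*}
Gronwall's inequality applied to $y(t)=\norm[\nabla u_\eps(t)]_{L^2}^2$, which satisfies $\abs[y'(t)]\le 4\abs[\lambda]\,y(t)$, then gives $y(t)\le e^{4\abs[\lambda]\abs[t]}y(0)$, which is exactly \eqref{eq:2.6}.

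The main obstacle is justifying the formal manipulations rigorously at the regularity level $u_\eps\in C(\R,H^1)\cap C^1(\R,H^{-1})$: the pairing $\rbra[\Delta u_\eps,u_\eps\log(|u_\eps|+\eps)]_{L^2}$ and the chain rule for $\nabla\bigl(u_\eps\log(|u_\eps|+\eps)\bigr)$ need care because $\log(|u_\eps|+\eps)$ is only Lipschitz (not $C^1$) in $u_\eps$ and, more importantly, $\Delta u_\eps$ lives in $H^{-1}$ rather than $L^2$. The standard remedy is to work with the Kato/Cazenave regularization scheme: one either invokes that $g_\eps$ maps $H^1$ into $H^1$ with the bound $\norm[g_\eps(v)-g_\eps(w)]_{H^1}\lesssim \norm[v-w]_{H^1}$ plus $\norm[g_\eps(v)]_{H^1}\lesssim\norm[v]_{H^1}$ uniformly in $\eps$ (an estimate of the type \eqref{eq:2.8} the paper refers to), so that the pairing $\rbra[\nabla u_\eps,\nabla g_\eps(u_\eps)]_{L^2}$ makes classical sense, or one first proves the identity for smooth data and passes to the limit. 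With that technical justification in hand, the differentiation-in-time of $\norm[\nabla u_\eps(t)]_{L^2}^2$ is legitimate and the computation above goes through verbatim.
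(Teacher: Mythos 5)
Your argument is essentially the paper's own proof: differentiate $\norm[\nabla u_\eps(t)]_{L^2}^2$, use the equation and the imaginary part of the pairing with $\Delta u_\eps$ so that the term $\log(|u_\eps|+\eps)|\nabla u_\eps|^2$ drops out as real, bound the remaining term via $\abs[\nabla|u_\eps|]\le\abs[\nabla u_\eps]$, and conclude by Gronwall, which is exactly what the paper does (your $2\lambda\Im\rbra[\Delta u_\eps, u_\eps\log(|u_\eps|+\eps)]_{L^2}$ has the opposite sign to the correct $2\lambda\Im\rbra[u_\eps\log(|u_\eps|+\eps),\Delta u_\eps]_{L^2}$, but this is immaterial since you only use its absolute value). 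The concluding remarks about justifying the formal computation at the $C(\R,H^1)\cap C^1(\R,H^{-1})$ level are a reasonable supplement and do not change the substance.
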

\begin{proof}
This is proved by the standard energy inequality. Indeed, we have
\begin{align*}
\frac{d}{dt}\norm[\nabla u_\eps(t)]_{L^2}^2
&=2\Re\rbra[\pt_t\nabla u_\eps,\nabla u_\eps]
\\
&=2\Im\rbra[i\pt_tu,-\Delta u_\eps]
\\
&=4\lambda\Im\rbra[u_\eps\log(|u_\eps|+\eps),\Delta u_\eps]
=-4\lambda\Im\rbra[\frac{u_\eps}{|u_\eps|+\eps}\nabla|u_\eps|,\nabla u_\eps].
\end{align*}
From the fact $\abs[\nabla |u_\eps|]\le |\nabla u_\eps|$ (see also \eqref{eq:3.10}), we obtain
\begin{align*}
\frac{d}{dt}\norm[\nabla u_\eps(t)]_{L^2}^2\le 4|\lambda|\norm[\nabla u_\eps]_{L^2}^2\quad\text{for all}~t\in\R.
\end{align*}
Then the result follows from Gronwall's lemma.
\end{proof}
It follows from \eqref{eq:2.3} and \eqref{eq:2.6} that for any $T>0$ we have
\begin{align}
\label{eq:2.7}
M_T\ce\sup_{0<\eps<1}\norm[u_\eps]_{C_T(H^1)}\le C(T,\norm[\varphi]_{H^1}).
\end{align}
Next we prove that $\{u_\eps\}_{0<\eps<1}$ forms a Cauchy sequence in $C_T(L^2_{\rm loc}(\R^d))$ as $\eps\downarrow0$ for any $T>0$.
The inequality  
\begin{align}
\label{eq:2.8}
\bigl|\Im \bigl(u\log (|u|+\eps)-v\log(|v|+\mu) \bigr) (\overline{u}-\overline{v})\bigr|
\le |u-v|^2+\abs[\eps-\mu]\abs[u-v],
\end{align}
which is regarded as a generalization of \eqref{eq:1.5}, is essential in our proof. The proof of \eqref{eq:2.8} is given in Appendix \ref{sec:A}.

Take a function $\zeta\in C^{\infty}_c(\R^d)$ satisfying
\begin{align*}
\zeta (x)=
\left\{
\begin{aligned}
&1&&\text{if}~|x|\le 1,
\\
&0 &&\text{if}~|x|\ge 2,
\end{aligned}
\r.
\qquad 0\le\zeta (x)\le 1\quad\text{for all}~x\in\R^d.
\end{align*}
For $R>0$ we set $\zeta_R\ce \zeta(x/R)$.
For $\eps,\mu\in(0,1)$ we proceed with the calculation of the quantity $\norm[\zeta_R(u_\eps-u_\mu)]_{L^2}^2$. 
A direct calculation, based on \eqref{eq:2.1}, \eqref{eq:2.7}, and \eqref{eq:2.8}, shows that
\begin{align*}
\frac{d}{dt}\norm[\zeta_R(u_\eps-u_\mu)]_{L^2}^2
&=2\Im\rbra[i\zeta_R^2\pt_t(u_\eps-u_\mu),u_\eps-u_\mu]
\\
&=
\begin{aligned}[t]
&2\Im\rbra[\nabla(\zeta_R^2)\nabla(u_\eps-u_\mu),u_\eps-u_\mu]
\\
&{}+4\lambda\Im\rbra[\zeta_R^2
\left(u_\eps\log (|u_\eps|+\eps)-u_\mu\log(|u_\mu|+\mu)\r),
u_\eps-u_\mu]
\end{aligned}
\\
&\le 
\begin{aligned}[t]
&\frac{C}{R}\norm[\nabla(u_\eps-u_\mu)]_{L^2}\norm[u_\eps-u_\mu]_{L^2}
\\
&{}+4\abs[\lambda]\left(\norm[\zeta_R(u_\eps-u_\mu)]_{L^2}^2+\abs[\eps-\mu]\norm[\zeta_R^2(u_\eps-u_\mu)]_{L^1} \r).
\end{aligned}
\end{align*}
Integrating the both sides of the last inequality over $[0,t]$ and applying Gronwall's lemma, we obtain that
\begin{align}
\label{eq:2.9}
\norm[\zeta_R(u_\eps-u_\mu)(t)]_{L^2}^2
\le e^{4|\lambda|T}T\left( \frac{C(M_T)}{R}+\abs[\eps-\mu]\abs[B_{2R}]^{1/2}\norm[\varphi]_{L^2}\r)
\end{align}
for all $t\in[-T,T]$, where we have used 
\begin{align*}
\norm[\zeta_R^2(u_\eps-u_\mu)]_{L^1}\le\norm[u_\eps-u_\mu]_{L^2(B_{2R})}\le2\abs[B_{2R}]^{1/2}\norm[\varphi]_{L^2}.
\end{align*}
We now fix $R_0>0$ and take $R\in(R_0,\infty)$ as a parameter. It follows from \eqref{eq:2.9} that 
\begin{align*}
\norm[u_\eps-u_\mu]_{C_T(L^2(B_{R_0}) )}^2
\le \norm[\zeta_R(u_\eps-u_\mu)]_{C_T(L^2)}^2
\le C(T,\norm[\varphi]_{H^1})\left(\frac{1}{R}+\abs[\eps-\mu]\abs[B_{2R}]^{1/2}\r),
\end{align*}
which yields that
\begin{align*}
\limsup_{\eps,\mu\downarrow0}\norm[u_\eps-u_\mu]_{C_T(L^2(B_{R_0}) )}^2\le \frac{C(T,\norm[\varphi]_{H^1})}{R}
\underset{R\to\infty}{\longrightarrow} 0.
\end{align*}
Since $R_0>0$ is arbitrary, we deduce that $\{u_\eps\}_{0<\eps<1}$ forms a Cauchy sequence of $C_T(L^2_{\rm loc}(\R^d))$. Combining this with \eqref{eq:2.3}, we deduce that there exists $u\in L^{\infty}(\R,L^2(\R^d))$ such that
\begin{align}
\label{eq:2.10}
u_\eps \to u \quad\text{in}~C_T(L^2_{\rm loc}(\R^d) )\quad
\text{as $\eps\downarrow0$}
\end{align}
for all $T>0$.
\begin{lemma}
\label{lem:2.5}
$u\in L^\infty_{\rm loc}(\R, H^1(\R^d))$ and 
\begin{align}
\label{eq:2.11}
u_\eps(t)\wto u(t) \quad\text{in}~H^1(\R^d)\quad\text{for all}~t\in\R.
\end{align}
\end{lemma}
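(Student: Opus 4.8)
\textbf{Proof proposal for Lemma \ref{lem:2.5}.}

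The plan is to combine the uniform $H^1$-bound \eqref{eq:2.7} with the already-established strong $L^2_{\mathrm{loc}}$-convergence \eqref{eq:2.10} and a weak compactness argument, and then to upgrade the weak convergence so that it holds at \emph{every} time $t\in\R$, not merely along subsequences depending on $t$. First I would fix $t\in\R$ and choose $T>|t|$. By \eqref{eq:2.7} the family $\{u_\eps(t)\}_{0<\eps<1}$ is bounded in $H^1(\R^d)$, hence relatively weakly compact: any sequence $\eps_n\downarrow0$ has a subsequence along which $u_{\eps_n}(t)\wto w$ weakly in $H^1(\R^d)$ for some $w\in H^1(\R^d)$ with $\norm[w]_{H^1}\le M_T$. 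Weak $H^1$-convergence implies strong $L^2_{\mathrm{loc}}$-convergence (by Rellich's theorem on each ball $B_R$, along a further subsequence, but the limit is already identified), so $w$ must coincide on every ball with the $L^2_{\mathrm{loc}}$-limit $u(t)$ from \eqref{eq:2.10}; therefore $w=u(t)$. Since the weak limit is the same for every subsequence, the full family converges: $u_\eps(t)\wto u(t)$ in $H^1(\R^d)$, which is \eqref{eq:2.11}. In particular $u(t)\in H^1(\R^d)$ with $\norm[u(t)]_{H^1}\le M_T$ for all $|t|\le T$, giving $u\in L^\infty_{\mathrm{loc}}(\R,H^1(\R^d))$ (indeed with the explicit bound $\norm[\nabla u(t)]_{L^2}^2\le e^{4|\lambda||t|}\norm[\nabla\varphi]_{L^2}^2$ inherited from \eqref{eq:2.6} by weak lower semicontinuity of the norm).

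The one point needing a little care is the identification $w=u(t)$: I should note that $u(t)$ is well-defined as an element of $L^2_{\mathrm{loc}}(\R^d)$ because \eqref{eq:2.10} asserts convergence in $C_T(L^2_{\mathrm{loc}})$, so the limit function has a value at each $t$; and the weak $H^1$-limit of $u_{\eps_n}(t)$ is in particular a weak $L^2_{\mathrm{loc}}$-limit, which by uniqueness of weak limits must agree with the strong $L^2_{\mathrm{loc}}$-limit $u(t)$. I do not expect any genuine obstacle here — the argument is the standard ``uniform bound plus convergence of the full net forces every subsequential weak limit to be the same'' device, and the only mild subtlety is bookkeeping the localization (testing against $\phi\in C^\infty_c(\R^d)$ and against gradients thereof) to pass from $L^2_{\mathrm{loc}}$ to weak $H^1$.
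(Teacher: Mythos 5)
Your argument is correct, but it takes a genuinely different route from the paper, and the difference matters for the paper's stated aims. You rely on weak compactness: boundedness of $\{u_\eps(t)\}$ in $H^1$ gives subsequential weak limits, each of which is identified with $u(t)$ through the strong $L^2_{\rm loc}$ convergence \eqref{eq:2.10} (your Rellich step is superfluous here — as you note yourself, uniqueness of weak limits in $L^2_{\rm loc}$ already does the identification), and then the subsequence-uniqueness principle upgrades this to convergence of the full family. That is a standard and complete argument. The paper, however, deliberately avoids compactness at this step: it first observes that \eqref{eq:2.10} together with the uniform $L^2$ bound gives weak $L^2$ convergence \eqref{eq:2.12} of the \emph{full} family, then tests $u_\eps(t)$ against $\nabla\psi$ for $\psi\in C^1_c(\R^d)$, passes to the limit using \eqref{eq:2.12} to get $\abs[\int u(t)\nabla\psi]\le M_T\norm[\nabla\psi]_{L^2}$, and invokes the characterization of $H^1$ functions (\cite[Proposition 9.3]{Bre11}) to conclude $u(t)\in H^1$ with $\norm[\nabla u(t)]_{L^2}\le M_T$; the weak convergence $\nabla u_\eps(t)\wto\nabla u(t)$ then follows from testing against $C^1_c$ functions, the uniform bound \eqref{eq:2.7}, and density. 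What the paper's route buys is precisely what Remark \ref{rem:2.7} advertises: no extraction of subsequences and no appeal to weak (or Rellich) compactness anywhere, in keeping with the constructive philosophy of the whole paper; what your route buys is brevity and familiarity, at the cost of reintroducing the compactness machinery the authors set out to avoid.
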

\begin{proof}
First it follows from \eqref{eq:2.10} that 
\begin{align}
\label{eq:2.12}
u_\eps(t)\wto u(t) \quad\text{in}~L^2(\R^d)\quad\text{for all}~t\in\R.
\end{align}
To prove $u\in L^\infty_{\rm loc}(\R, H^1(\R^d))$, we use the characterization of $H^1$ functions. For any $\psi\in C^1_c(\R^d)$ and $t\in [-T,T]$, we obtain from \eqref{eq:2.7} that
\begin{align*}
\abs[\int u_\eps(t)\nabla\psi]=\abs[\int \nabla u_\eps(t)\psi]
\le M_T\norm[\nabla\psi]_{L^2}.
\end{align*}
Then, it follows from \eqref{eq:2.12} that
\begin{align*}
\abs[\int u(t)\nabla\psi]\le M_T\norm[\nabla\psi]_{L^2}
\quad\text{for all}~t\in[-T,T].
\end{align*}
Applying \cite[Proposition 9.3]{Bre11}, we deduce that for each $t\in[-T,T]$
\begin{align*}
u(t)\in H^1(\R^d)\quad \text{and}\quad\norm[\nabla u(t)]_{L^2}\le M_T,
\end{align*}
which yields that $u\in L^\infty_{\rm loc}(\R, H^1(\R^d))$.
Moreover, it follows from \eqref{eq:2.12} that 
\begin{align*}
\int\nabla u_\eps(t)\psi \to
\int \nabla u(t)\psi
\end{align*}
for any $\psi\in C^1_c(\R^d)$ and all $t\in\R$. Using \eqref{eq:2.7} and a density argument, we deduce that
\begin{align}
\label{eq:2.13}
\nabla u_\eps(t)\wto \nabla u(t) \quad\text{in}~L^2(\R^d)
\end{align}
for all $t\in\R$. This completes the proof.
\end{proof} 

For the nonlinear term we obtain from \eqref{eq:1.4} that $g(u(t))\in L^2_{\rm loc}(\R^d)$.
Regarding convergence, we prove the following.
\begin{lemma}
\label{lem:2.6}
For all $t\in\R$ we have
\begin{align*}
g_\eps(u_\eps(t))\to g(u(t))~\text{in}~L^2_{\rm loc}(\R^d)
\quad\text{as}~\eps\downarrow0.
\end{align*}
\end{lemma}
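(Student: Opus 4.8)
\textbf{Proof proposal for Lemma \ref{lem:2.6}.}

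The plan is to fix $t\in\R$ and a bounded open set $\Omega\subset\R^d$, and show that $g_\eps(u_\eps(t))\to g(u(t))$ in $L^2(\Omega)$. The starting point is the pointwise splitting
\begin{align*}
g_\eps(u_\eps)-g(u)
=\bigl(g_\eps(u_\eps)-g_\eps(u)\bigr)+\bigl(g_\eps(u)-g(u)\bigr).
\end{align*}
For the second term, we use that $g_\eps(u)-g(u)=2u\log\!\bigl((|u|+\eps)/|u|\bigr)=2u\log(1+\eps/|u|)$ pointwise, which tends to $0$ as $\eps\downarrow0$ and is dominated by the $\eps$-uniform bound $|g_\eps(u)|\lesssim |u|^{1-\delta}+|u|^{1+\delta}$ coming from \eqref{eq:1.4} (the same estimate holds for $g_\eps$ since $\log(|u|+\eps)\le\log(|u|+1)$ and, on $|u|\le1$, $|\log(|u|+\eps)|\le|\log|u||$); with $\delta$ chosen so that $|u|^{1-\delta},|u|^{1+\delta}\in L^2(\Omega)$ using $u(t)\in H^1(\R^d)$ from Lemma \ref{lem:2.5} and Sobolev embedding. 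Dominated convergence then kills the second term in $L^2(\Omega)$.

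For the first term, the key is the elementary Lipschitz-type bound for $g_\eps$: there is a constant independent of $\eps\in(0,1)$ such that $|g_\eps(z)-g_\eps(w)|\lesssim |z-w|\bigl(1+|\log|z||+|\log|w||\bigr)$ for $z,w\in\C$ (this follows from the mean value theorem applied to $s\mapsto 2s\log(s+\eps)$, whose derivative is $2\log(s+\eps)+2s/(s+\eps)$, bounded by $C(1+|\log s|)$ uniformly in $\eps$, together with $|\,|z|-|w|\,|\le|z-w|$ and handling the phase factor). Then, using Hölder on $\Omega$ with the triple of exponents adapted to a small loss $\delta$,
\begin{align*}
\norm[g_\eps(u_\eps)-g_\eps(u)]_{L^2(\Omega)}
\lesssim \norm[u_\eps-u]_{L^{2+\sigma}(\Omega)}\bigl(1+\norm[\,|\log|u_\eps|\,|\,]_{L^{q}(\Omega)}+\norm[\,|\log|u|\,|\,]_{L^{q}(\Omega)}\bigr),
\end{align*}
for suitable $\sigma>0$ and $q<\infty$. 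The logarithmic factors are controlled uniformly in $\eps$ by $M_T$ via $|\log s|\lesssim s^{-\delta}+s^{\delta}$ and the uniform $H^1$-bound \eqref{eq:2.7} together with Sobolev embedding (here one uses $\eps<1$ so $|\log(|u_\eps|+\eps)|\le|\log|u_\eps||$ on the set $|u_\eps|\le1$ and is bounded elsewhere). The factor $\norm[u_\eps-u]_{L^{2+\sigma}(\Omega)}$ is handled by interpolating between the $C_T(L^2_{\rm loc})$-convergence \eqref{eq:2.10} and the uniform $L^{2^*}(\Omega)$-bound from \eqref{eq:2.7}: since $u_\eps(t)\to u(t)$ in $L^2(\Omega)$ and $\{u_\eps(t)\}$ is bounded in $L^{2^*}(\Omega)$ (and $2<2+\sigma<2^*$ for appropriate $\sigma$ — in dimensions $d=1,2$ any $\sigma$ works), $u_\eps(t)\to u(t)$ in $L^{2+\sigma}(\Omega)$ as well. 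Combining these bounds gives $\norm[g_\eps(u_\eps)-g_\eps(u)]_{L^2(\Omega)}\to0$.

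The main obstacle I expect is purely technical bookkeeping of exponents: one must choose $\delta$ (hence $\sigma,q$) so that simultaneously $|u|^{1\pm\delta}\in L^2(\Omega)$, the logarithmic factors lie in $L^q(\Omega)$ uniformly in $\eps$, and $2+\sigma<2^*$, all consistently via Hölder with exponents summing to $1/2$. In $d\ge3$ this is a genuine (but routine) constraint; for $d=1,2$ it is automatic since $H^1\hookrightarrow L^p$ for all $p<\infty$. No compactness is used anywhere: the convergence $u_\eps(t)\to u(t)$ in $L^2(\Omega)$ is the one already established strongly in \eqref{eq:2.10}, and everything else is deterministic domination.
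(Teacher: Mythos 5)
Your decomposition and your treatment of the second piece are fine: $g_\eps(u)-g(u)=2u\log(1+\eps/|u|)$ is in fact bounded pointwise by $2\eps$, so it converges to $0$ in $L^2(\Omega)$ with no domination argument needed, and your use of \eqref{eq:2.7} and \eqref{eq:2.10} is the same input the paper uses. The genuine gap is in the first piece. Your Lipschitz-type bound $|g_\eps(z)-g_\eps(w)|\cleq|z-w|\bigl(1+|\log|z||+|\log|w||\bigr)$, uniform in $\eps\in(0,1)$, is correct, but the next step --- the claim that the factors $\norm[\log|u_\eps(t)|]_{L^q(\Omega)}$ and $\norm[\log|u(t)|]_{L^q(\Omega)}$ are ``controlled uniformly in $\eps$ by $M_T$'' via $|\log s|\cleq s^{-\delta}+s^{\delta}$ and Sobolev embedding --- is false. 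That pointwise bound forces you to integrate the \emph{negative} power $|u_\eps|^{-\delta q}$ over $\Omega$, and the $H^1$-bound \eqref{eq:2.7} gives no lower bound whatsoever on $|u_\eps(t)|$ or $|u(t)|$: these functions can be arbitrarily small, and can even vanish on a set of positive measure in $\Omega$, in which case the $L^q$-norms of the logarithms are infinite. This is precisely the failure of local Lipschitz continuity of $z\mapsto z\log|z|$ at the origin that the paper is organized around, so no choice of exponents $\delta,\sigma,q$ can repair the bookkeeping. (A minor additional slip: $|\log(|u_\eps|+\eps)|\le|\log|u_\eps||$ on $\{|u_\eps|\le1\}$ fails when $|u_\eps|+\eps>1$; the correct bound $|\log(|u_\eps|+\eps)|\le\log 2+|\log|u_\eps||$ is harmless, but it does not remove the unbounded-logarithm obstruction near the zero set.)

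The paper avoids this by never using a Lipschitz (exponent $1$) bound on the difference of nonlinearities. Instead it invokes the H\"older-type inequality \eqref{eq:A.2} of Lemma \ref{lem:A.2}, in which the difference enters as $|u_\eps-u|^{\alpha}$ with $\alpha<1$ (the proof takes $\alpha=1/2$) and the logarithms appear only through $\log^+$, i.e. only on the region where the moduli are at least $1$; there $\log^+$ can be absorbed into a small positive power, so the weight $1+|u_\eps|^{1/2+\delta}+|u|^{1/2+\delta}$ involves only positive powers, which \emph{are} controlled by \eqref{eq:2.7} and Sobolev embedding, while the factor $|u_\eps-u|^{1/2}$ is handled by \eqref{eq:2.10}. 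To close your argument you would have to replace your Lipschitz bound by an estimate of this H\"older-in-$|z-w|$ type (or excise a neighbourhood of the zero set and estimate $g_\eps(u_\eps)$ and $g_\eps(u)$ there separately, which amounts to reproving \eqref{eq:A.2}); as written, the first-term estimate does not go through.
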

\begin{proof}
We show that for any $\Omega\subset\subset\R^d$ and $t\in\R$,
\begin{align}
\label{eq:2.14}
u_\eps(t)\log  (|u_\eps(t)|+\eps) \to u(t)\log |u(t)|\quad\text{in}~L^2(\Omega)\quad\text{as}~\eps\downarrow0 .
\end{align}
It follows from \eqref{eq:A.2} that for any small $\delta>0$ there exists $C(\delta)>0$ such that
\begin{align*}
\bigl| {u_\eps\log(|u_\eps|+\eps)-u\log\abs[u]} \bigr|
\le  
\eps+|u_\eps-u|
+C(\delta) (1+|u_\eps|^{\frac{1}{2}+\delta}+|u|^{\frac{1}{2}+\delta})\abs[u_\eps-u]^{1/2}.
\end{align*}
We only check the most nontrivial estimate on the right-hand side. Fix $\delta>0$ satisfying $2+4\delta<2^*$. Then, we have
\begin{align*}
\norm[ {|u_\eps|^{\frac{1}{2}+\delta}\abs[u_\eps-u]^{1/2} } ]_{L^2(\Omega)}^2
&=\int_{\Omega} |u_\eps|^{1+2\delta}\abs[u_\eps-u]
\\
&\le \left(\int_\Omega|u_\eps|^{2+4\delta}\r)^{1/2}\norm[u_\eps-u]_{L^2(\Omega)}
\\
&\le C(\norm[u_\eps]_{H^1})\norm[u_\eps-u]_{L^2(\Omega)}.
\end{align*}
Therefore the result follows from \eqref{eq:2.7} and \eqref{eq:2.10}.
\end{proof}

We deduce from the equation \eqref{eq:2.1} that for every $\psi\in C^{\infty}_c(\R^d)$ and every $\phi\in C^1_c(\R)$,
\begin{align*}
\int_\R \rbra[i u_\eps,\psi]_{L^2}\phi'(t)dt
&=-\int_\R \tbra[i\pt_t u_\eps,\psi]_{H^{-1},H^1}\phi(t)dt
\\
&=\int_\R \tbra[\Delta u_\eps+2\lambda u_\eps\log(|u_\eps|+\eps),\psi]_{H^{-1},H^1}\phi(t)dt
\\
&=\int_\R\bigl\{ -\rbra[\nabla u_\eps,\nabla\psi]_{L^2}+\rbra[\lambda g_\eps(u_\eps),\psi]_{L^2} \bigr\} \phi(t)dt.
\end{align*}
Passing to the limit as $\eps\downarrow0$ and we obtain from \eqref{eq:2.11} and Lemma~\ref{lem:2.6} that
\begin{align*}
\int_\R \rbra[i u,\psi]_{L^2}\phi'(t)dt
=\int_\R\bigl\{ -\rbra[\nabla u,\nabla\psi]_{L^2}+\rbra[\lambda g(u),\psi]_{L^2} \bigr\} \phi(t)dt.
\end{align*}
It is easily verified from this formula and \eqref{eq:2.11} that for any $\Omega\subset\subset\R^d$,
\begin{align*}
u\in L^{\infty}_{\rm loc}(\R,H^1(\R^d))\cap W^{1,\infty}_{\rm loc}(\R, H^{-1}(\Omega))
\end{align*}
and
\begin{align}
\label{eq:2.15}
i\pt_t u+\Delta u+\lambda g(u)=0\quad\text{in}~H^{-1}(\Omega)
\end{align}
for a.e. $t\in\R$.
\begin{remark}
\label{rem:2.7}
The proof of Lemma \ref{lem:2.5} is proved in a way that does not rely on weak compactness. The advantage of this argument is that we do not need to take subsequences.
\end{remark}
\begin{remark}
\label{rem:2.8}
In the proof of \cite{CH80, C03}, the convergence of the nonlinear term $g_\eps(u_\eps)$ is proved by using some compactness result as in \cite[Lemme 1.2]{JL69}, \cite[Theorem 1.2]{S70}, \cite[Proposition 1.2.1]{C03}. Our approach does not require any compactness argument at this step either. We note that the proof of Lemma~\ref{lem:2.6} depends on elementary logarithmic inequalities.
\end{remark}

\subsection{Uniqueness and regularity}
\label{sec:2.3}
Uniqueness of solutions follows from the following lemma. 
\begin{lemma}[{\cite[Lemme 2.2.1]{CH80}}]
\label{lem:2.9}
Assume that $u,v\in L^\infty_T(H^1(\R^d))$ for some $T>0$ satisfies \eqref{eq:1.1} in the distribution sense. Then, $u=v$.
\end{lemma}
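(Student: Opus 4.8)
The plan is to prove uniqueness by a Gronwall argument on the localized quantity $\norm[\zeta_R(u-v)(t)]_{L^2}^2$, exactly as in the construction of weak solutions above but now comparing two solutions rather than two approximants. Let $w\ce u-v$. The difference equation is $i\pt_t w+\Delta w+\lambda(g(u)-g(v))=0$ in $H^{-1}(\Omega)$ for a.e.\ $t$. Pairing (with the cutoff $\zeta_R^2$) against $w$ and taking imaginary parts, the $\Delta w$ term produces, after integration by parts, a commutator term $2\Im\rbra[\nabla(\zeta_R^2)\nabla w,w]$, while the nonlinear term produces $2\lambda\Im\rbra[\zeta_R^2(g(u)-g(v)),w]$. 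For the latter I would invoke the key pointwise inequality \eqref{eq:1.5} (with $\mu=\eps=0$, i.e.\ the original Cauchy--Haraux bound $|\Im(u\log|u|-v\log|v|)(\bar u-\bar v)|\le|u-v|^2$), which is precisely the estimate designed to bypass the failure of local Lipschitz continuity; it gives $|\Im\rbra[\zeta_R^2(g(u)-g(v)),w]|\le 2\norm[\zeta_R w]_{L^2}^2$ (the factor $2$ from $g(u)=2u\log|u|$).

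First I would address the regularity/integrability issues that make the formal computation legitimate: since $u,v\in L^\infty_T(H^1)$, Sobolev embedding gives $u,v\in L^\infty_T(L^p_{\mathrm{loc}})$ for $p<2^*$, and \eqref{eq:1.4} then shows $g(u)-g(v)\in L^\infty_T(L^2_{\mathrm{loc}})$, so $w\in W^{1,\infty}_T(H^{-1}_{\mathrm{loc}})$ and the pairings make sense. The identity $\frac{d}{dt}\norm[\zeta_R w(t)]_{L^2}^2=2\Im\tbra[i\zeta_R^2\pt_t w,w]$ holds in the a.e./distributional sense (this is the standard regularization lemma, e.g.\ as in \cite[Chapter 3]{C03}); one may need a brief mollification-in-space argument since $\pt_t w$ only lies in $H^{-1}_{\mathrm{loc}}$, but this is routine. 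Then the two terms above yield
\begin{align*}
\frac{d}{dt}\norm[\zeta_R w(t)]_{L^2}^2\le \frac{C}{R}\norm[\nabla w]_{L^2}\norm[w]_{L^2}+4\abs[\lambda]\norm[\zeta_R w]_{L^2}^2\le \frac{C(M_T)}{R}+4\abs[\lambda]\norm[\zeta_R w(t)]_{L^2}^2,
\end{align*}
using $\norm[w]_{L^\infty_T(H^1)}\le M_T<\infty$ for the uniform bound on the commutator term.

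Since $w(0)=0$ (both solutions have the same initial data, or more precisely since the statement as quoted does not fix initial data, one runs the argument forward from any time and concludes the two solutions with matching data at that time agree), Gronwall's lemma gives $\norm[\zeta_R w(t)]_{L^2}^2\le \frac{C(M_T)}{R}e^{4|\lambda|T}$ for all $t\in[-T,T]$. Letting $R\to\infty$ with $t$ fixed forces $w(t)=0$ on every ball $B_{R_0}$, hence $u(t)=v(t)$ in $L^2(\R^d)$ for all $t\in[-T,T]$, and since $T$ is arbitrary, $u=v$. The main obstacle is not any single estimate but the justification that the formal energy identity for $\frac{d}{dt}\norm[\zeta_R w]_{L^2}^2$ is valid given only the weak regularity $w\in L^\infty_T(H^1)\cap W^{1,\infty}_T(H^{-1}_{\mathrm{loc}})$; the nonlinear term's low integrability near $u=0$ is exactly what \eqref{eq:1.5} is there to neutralize, so once the identity is in place the rest is a clean Gronwall argument with the cutoff absorbing the non-decay of $w$ at spatial infinity.
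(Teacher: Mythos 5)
Your proposal is correct and follows essentially the same route as the paper: the localized Gronwall argument on $\norm[\zeta_R(u-v)]_{L^2}^2$, with the cutoff commutator term controlled by the $L^\infty_T(H^1)$ bound and the nonlinear term handled pointwise by \eqref{eq:1.5}, followed by letting $R\to\infty$ (the paper uses Fatou's lemma at this last step). Your additional remarks on justifying the differentiation of $\norm[\zeta_R(u-v)]_{L^2}^2$ and on the initial-data convention are consistent with the paper, which treats these points as routine.
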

\begin{proof}
For completeness we give a proof. The argument runs parallel to the above proof of that $\{u_\eps\}_{0<\eps<1}$ forms a Cauchy sequence. We set
\begin{align*}
M\ce \max\left\{ \norm[u]_{L^\infty_T(H^1)},  \norm[v]_{L^\infty_T(H^1)}\r\}.
\end{align*}
As mentioned above, $u,v$ satisfies the equation in the sense of \eqref{eq:2.15}. Then, using the cutoff function $\zeta_R$, we obtain from \eqref{eq:1.5} that
\begin{align*}
\frac{d}{dt}\norm[\zeta_R(u-v)]_{L^2}^2
&=2\Im\tbra[i\zeta_R^2\pt_t(u-v),u-v]_{H^{-1}(B_{2R}), H^1_0(B_{2R})}
\\
&=
\begin{aligned}[t]
&2\Im\rbra[\nabla(\zeta_R^2)\nabla(u-v),u-v]_{L^2}
\\
&{}+4\lambda\Im\rbra[\zeta_R^2
\left(u\log |u|-v\log|v|\r),
u-v]_{L^2}
\end{aligned}
\\
&\le 
\frac{C(M)}{R}+4\abs[\lambda]\norm[\zeta_R(u-v)]_{L^2}^2.
\end{align*}
Applying Gronwall's lemma, 
\begin{align*}
\norm[\zeta_R(u-v)(t)]_{L^2}^2\le e^{4|\lambda|T} \frac{C(M)}{R}\quad
\text{for all}~t\in [-T,T].
\end{align*}
Applying Fatou's lemma, 
\begin{align*}
\norm[(u-v)(t)]_{L^2}^2
\le\liminf_{R\to\infty}\norm[\zeta_R(u-v)(t)]_{L^2}^2\le0
\end{align*}
for all $t\in[-T,T]$. This yields that $u=v$ on $[-T,T]$.
\end{proof}
Combining Lemma \ref{lem:2.9}, one can improve the regularity of the solution.
\begin{lemma}
\label{lem:2.10}
$u\in C_w(\R, H^1(\R^d))\cap C(\R,L^2(\R^d))$ and 
\begin{align*}
u_\eps (t) \to u(t)\quad \text{in}~L^2(\R^d)
\end{align*}
for all $t\in\R$.
\end{lemma}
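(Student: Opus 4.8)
The plan is to upgrade the convergence and continuity properties of the weak solution $u$ constructed above. First I would establish strong $L^2$-convergence $u_\eps(t)\to u(t)$ for every $t\in\R$. We already know $u_\eps(t)\wto u(t)$ in $L^2$ for all $t$ (see \eqref{eq:2.12}), so by the standard Hilbert-space fact it suffices to show $\norm[u_\eps(t)]_{L^2}\to\norm[u(t)]_{L^2}$. The conservation law \eqref{eq:2.3} gives $\norm[u_\eps(t)]_{L^2}=\norm[\varphi]_{L^2}$ for all $\eps$, so the matter reduces to proving $\norm[u(t)]_{L^2}=\norm[\varphi]_{L^2}$, i.e. that the limit solution itself conserves the $L^2$-norm. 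This I would obtain by a cutoff argument on the equation \eqref{eq:2.15}: testing against $\zeta_R^2 u$, the nonlinear term is real so drops out of the imaginary part, and one gets $\frac{d}{dt}\norm[\zeta_R u(t)]_{L^2}^2=2\Im\rbra[\nabla(\zeta_R^2)\nabla u,u]_{L^2}$, whose right-hand side is bounded in absolute value by $C(M_T)/R$; integrating and letting $R\to\infty$ (using dominated convergence on the left, which is legitimate since $u\in L^\infty_{\rm loc}(\R,L^2)$) yields $\norm[u(t)]_{L^2}=\norm[\varphi]_{L^2}$. Together with weak convergence this gives the pointwise strong $L^2$-convergence.

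Next I would deduce $u\in C(\R,L^2(\R^d))$. From \eqref{eq:2.10} the functions $u_\eps$ converge to $u$ in $C_T(L^2_{\rm loc})$, hence $u$ is continuous from $\R$ into $L^2_{\rm loc}$, and in particular $u(t_n)\wto u(t)$ in $L^2(\R^d)$ whenever $t_n\to t$ (weak limits can be identified locally). Combined with $\norm[u(t_n)]_{L^2}=\norm[\varphi]_{L^2}=\norm[u(t)]_{L^2}$, the same Hilbert-space argument upgrades this to $u(t_n)\to u(t)$ strongly in $L^2(\R^d)$, giving $u\in C(\R,L^2(\R^d))$. (One could alternatively invoke the equation: $\pt_t u\in L^\infty_{\rm loc}(\R,H^{-1}(\Omega))$ forces $u\in C(\R,H^{-1}(\Omega))$, and interpolating with the uniform $H^1$-bound from \eqref{eq:2.7} again yields strong $L^2_{\rm loc}$-continuity, after which the norm-conservation promotes it to global $L^2$-continuity.)

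Finally, for $u\in C_w(\R,H^1(\R^d))$: we already have the uniform bound $\norm[u(t)]_{H^1}\le M_T$ for $t\in[-T,T]$ from Lemma~\ref{lem:2.5}, so given $t_n\to t$ the sequence $u(t_n)$ is bounded in $H^1$ and any weak-$H^1$ cluster point must, by the strong $L^2$-continuity just proved, equal $u(t)$; since the bounded set has a unique cluster point, $u(t_n)\wto u(t)$ in $H^1$, i.e. weak $H^1$-continuity. I expect the main obstacle to be the rigorous justification that the limit $u$ conserves the $L^2$-norm on all of $\R^d$ — the cutoff/Gronwall step itself is routine, but one must be a little careful that the time-derivative pairing $\tbra[i\pt_t u,\zeta_R^2 u]$ makes sense given only $\pt_t u\in H^{-1}(\Omega)$ locally, which is fine because $\zeta_R^2 u\in H^1_0(B_{2R})$, and that the passage $R\to\infty$ commutes with the time integral; once norm-conservation of $u$ is in hand, the rest is soft functional analysis using weak convergence plus convergence of norms.
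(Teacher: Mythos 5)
Your proof is correct, but the key step is handled by a genuinely different route than the paper's. Where the paper first gets only $\norm[u(t)]_{L^2}^2\le\norm[\varphi]_{L^2}^2$ from \eqref{eq:2.3}, \eqref{eq:2.12} and weak lower semicontinuity, and then upgrades this to the equality \eqref{eq:2.16} by invoking uniqueness (Lemma \ref{lem:2.9}) together with the time-translation/reversibility argument of \cite[Theorem 3.3.9]{C03}, you instead prove conservation of the $L^2$-norm of the limit $u$ directly from the limit equation \eqref{eq:2.15}: pairing with $\zeta_R^2u\in H^1_0(B_{2R})$, the logarithmic term contributes $\lambda\zeta_R^2\,2|u|^2\log|u|$, which is real and hence drops out of the imaginary part, leaving only the commutator term of size $C(M_T)/R$, and letting $R\to\infty$ gives $\norm[u(t)]_{L^2}=\norm[\varphi]_{L^2}$. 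This is legitimate — the duality pairing and the absolute continuity of $t\mapsto\norm[\zeta_R u(t)]_{L^2}^2$ are justified exactly as in the paper's own uniqueness proof, since $\zeta_R^2u(t)\in H^1_0(B_{2R})$ and $\pt_t u\in L^\infty_{\rm loc}(\R,H^{-1}(B_{2R}))$ — and it buys you a self-contained argument that does not lean on Lemma \ref{lem:2.9} or on the external reference for this step; the paper's route is shorter on the page but outsources the reversibility argument. After norm conservation, your remaining steps (weak convergence plus convergence of norms gives strong $L^2$-convergence and $C(\R,L^2)$; the uniform $H^1$-bound from Lemma \ref{lem:2.5} plus identification of cluster points gives $C_w(\R,H^1)$) coincide in substance with the paper, only with the order of the $C_w(\R,H^1)$ claim moved to the end, which is harmless.
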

\begin{proof}
First we note that 
$
u\in C_w(\R, H^1(\R^d))$.
Indeed this easily follows from Lemma \ref{lem:2.5} and 
$u\in C(\R,L^2_{\rm loc}(\R^d))$. 
Next, we obtain from \eqref{eq:2.3} and \eqref{eq:2.12} that
\begin{align*}
\norm[u(t)]_{L^2}^2\le\liminf_{\eps\to0}\norm[u_\eps(t)]_{L^2}^2=\norm[\varphi]_{L^2}^2\quad\text{for all}~t\in\R.
\end{align*}
Uniqueness of solutions yields that
\begin{align}
\label{eq:2.16}
\norm[u(t)]_{L^2}^2=\norm[\varphi]_{L^2}^2
\quad\text{for all}~t\in\R,
\end{align}
see the proof of \cite[Theorem 3.3.9]{C03} for more details. Combining this with $u\in C_w(\R,L^2(\R^d))$, we deduce that 
$u\in C(\R,L^2(\R^d))$. Moreover, the claim of the convergence follows from \eqref{eq:2.12}, \eqref{eq:2.3}, and \eqref{eq:2.16}.
\end{proof}
\begin{lemma}
\label{lem:2.11}
$u\in C(\R, H^1(\R^d))$.
\end{lemma}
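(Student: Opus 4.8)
The plan is to upgrade the weak $H^1$-continuity from Lemma~\ref{lem:2.10} to strong $H^1$-continuity, and the natural device is the standard one: weak convergence plus convergence of norms implies strong convergence in a Hilbert space. Since we already know $u\in C_w(\R,H^1(\R^d))$ and $u\in C(\R,L^2(\R^d))$, it suffices to prove that $t\mapsto\norm[\nabla u(t)]_{L^2}^2$ is continuous on $\R$; combined with $t\mapsto\norm[u(t)]_{L^2}^2$ being (trivially) continuous by \eqref{eq:2.16}, this gives continuity of $t\mapsto\norm[u(t)]_{H^1}^2$, and then for a sequence $t_n\to t_0$ we have $u(t_n)\wto u(t_0)$ in $H^1$ together with $\norm[u(t_n)]_{H^1}\to\norm[u(t_0)]_{H^1}$, hence $u(t_n)\to u(t_0)$ strongly in $H^1$.

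First I would establish an energy inequality for the limit solution. From Lemma~\ref{lem:2.4} we have the a priori bound $\norm[\nabla u_\eps(t)]_{L^2}^2\le e^{4\abs[\lambda]\abs[t]}\norm[\nabla\varphi]_{L^2}^2$, and by the weak lower semicontinuity of the $L^2$-norm together with \eqref{eq:2.13} this passes to the limit to give $\norm[\nabla u(t)]_{L^2}^2\le e^{4\abs[\lambda]\abs[t]}\norm[\nabla\varphi]_{L^2}^2$ for all $t\in\R$; more usefully, running the same Gronwall argument from an arbitrary base time and using time-translation invariance (note that $u(s+\cdot)$ is the solution with data $u(s)$, and uniqueness from Lemma~\ref{lem:2.9} identifies it), one gets $\norm[\nabla u(t)]_{L^2}^2\le e^{4\abs[\lambda]\abs[t-s]}\norm[\nabla u(s)]_{L^2}^2$ for all $s,t\in\R$. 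This one-sided estimate, valid with the roles of $s$ and $t$ interchanged, sandwiches $\norm[\nabla u(t)]_{L^2}^2$ between $e^{-4\abs[\lambda]\abs[t-s]}\norm[\nabla u(s)]_{L^2}^2$ and $e^{4\abs[\lambda]\abs[t-s]}\norm[\nabla u(s)]_{L^2}^2$. Letting $t\to s$ forces $\limsup_{t\to s}\norm[\nabla u(t)]_{L^2}^2\le\norm[\nabla u(s)]_{L^2}^2$ and $\liminf_{t\to s}\norm[\nabla u(t)]_{L^2}^2\ge\norm[\nabla u(s)]_{L^2}^2$, i.e. $t\mapsto\norm[\nabla u(t)]_{L^2}^2$ is continuous. (Here the weak $H^1$-continuity of $u$ is what is needed to even make sense of the comparison, since $\norm[\nabla u(t)]_{L^2}$ is then at least lower semicontinuous in $t$; but the Gronwall sandwich gives the full statement directly.)

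Alternatively, and perhaps more in the spirit of the paper's emphasis on avoiding conservation of energy, one can argue as in \cite[Remarks (c)]{KL84}: lower semicontinuity of $t\mapsto\norm[\nabla u(t)]_{L^2}^2$ at a point $s$ comes from $u(t)\wto u(s)$ in $H^1$ as $t\to s$ (itself a consequence of weak $L^2$-continuity plus the uniform local $H^1$-bound), and the reverse, upper semicontinuity, comes from the energy inequality $\norm[\nabla u(t)]_{L^2}^2\le e^{4\abs[\lambda]\abs[t-s]}\norm[\nabla u(s)]_{L^2}^2$ obtained above. Combining the two semicontinuities yields continuity of $t\mapsto\norm[\nabla u(t)]_{L^2}^2$, and the Radon–Riesz argument then completes the proof exactly as above.

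The main obstacle is justifying the energy inequality for the limit $u$ rather than just for the approximations $u_\eps$: one must be careful that the $H^1$-norm bound genuinely propagates to $u$ with the correct base point, which relies on uniqueness (Lemma~\ref{lem:2.9}) to identify translates of $u$ as solutions, and on \eqref{eq:2.13} for the weak lower semicontinuity of $\norm[\nabla\cdot]_{L^2}$. Once the inequality $\norm[\nabla u(t)]_{L^2}^2\le e^{4\abs[\lambda]\abs[t-s]}\norm[\nabla u(s)]_{L^2}^2$ is in hand, everything else is the routine Hilbert-space fact that weak convergence together with norm convergence upgrades to strong convergence, so I would expect the write-up to be short.
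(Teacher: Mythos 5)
Your proposal is correct and is essentially the paper's argument: the paper likewise passes the approximate bound \eqref{eq:2.6} to the limit via \eqref{eq:2.11} and weak lower semicontinuity to get $\limsup_{t\to0}\norm[\nabla u(t)]_{L^2}^2\le\norm[\nabla\varphi]_{L^2}^2$, gets the reverse inequality from the weak $H^1$-continuity at $t=0$, and concludes by weak convergence plus norm convergence, reducing to $t=0$ by the same translation-plus-uniqueness restart that you make explicit. Your two-sided Gronwall ``sandwich'' $e^{-4\abs[\lambda]\abs[t-s]}\norm[\nabla u(s)]_{L^2}^2\le\norm[\nabla u(t)]_{L^2}^2\le e^{4\abs[\lambda]\abs[t-s]}\norm[\nabla u(s)]_{L^2}^2$ is only a minor variant (it replaces the weak-continuity lower bound), and your stated alternative is precisely the proof in the paper.
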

\begin{proof}
It is enough to show the continuity of $t\mapsto u(t) \in H^1(\R^d)$ at $t=0$. It follows from \eqref{eq:2.6}, \eqref{eq:2.11}, and the weak lower semicontinuity of the norm that
\begin{align*}
\norm[\nabla u(t)]_{L^2}^2 
\le e^{4\abs[\lambda]\abs[t]}\norm[\nabla\varphi]_{L^2}^2.
\end{align*}
Passing to the limit as $t\to0$ we have
\begin{align}
\label{eq:2.17}
\limsup_{t\to0}\norm[\nabla u(t)]_{L^2}^2 \le \norm[\nabla\varphi]_{L^2}^2.
\end{align}
On the other hand, it follows from the weak continuity $t\mapsto u(t) \in H^1(\R^d)$ at $t=0$ that 
\begin{align*}
\norm[\nabla\varphi]_{L^2}^2\le\liminf_{t\to0}\norm[\nabla u(t)]_{L^2}^2.
\end{align*}
Combined with \eqref{eq:2.17}, this yields that
\begin{align*}
\lim_{t\to0}\norm[\nabla u(t)]_{L^2}^2 =\norm[\nabla\varphi]_{L^2}^2.
\end{align*}
Since $\nabla u(t)\wto \nabla\varphi$ weakly in $L^2(\R^d)$ as $t\to0$, we deduce that
\begin{align*}
\nabla u(t)\to \nabla\varphi\quad\text{in}~L^2(\R^d).
\end{align*}
This completes the proof.
\end{proof}
This completes the proof of the first part of Theorem \ref{thm:1.1}.
\begin{remark}
\label{rem:2.12}
In \cite{CH80} the result of Lemma \ref{lem:2.11} is proved via the conservation of the energy, and the argument requires that $\lambda>0$. We note that our proof of Lemma \ref{lem:2.11} is independent of the conservation of the energy and the sign of $\lambda$.
\end{remark}

\subsection{Construction of solutions in $W_1$}
\label{sec:2.4}
We now assume that $\varphi\in W_1{\,\subset H^1(\R^d)}$. From the dominated convergence theorem we have
\begin{align*}
E_\eps(\varphi)\to E(\varphi)\quad\text{as}~\eps\downarrow0,
\end{align*}
where we recall that $E_\eps(\varphi)$ and $E(\varphi)$ are defined in \eqref{eq:2.2} and \eqref{eq:2.5}, respectively.
The nontrivial task is to check the limit of $E_\eps(u_\eps(t))$ as $\eps\downarrow0$. 
We take a function $\theta\in C^{1}_c(\C,\R)$ satisfying
\begin{align}
\label{eq:2.18}
\theta (z)=
\left\{
\begin{aligned}
&1&&\text{if}~|z|\le 1/4,
\\
&0 &&\text{if}~|z|\ge 1/2,
\end{aligned}
\r.
\qquad 0\le\theta (z)\le 1\quad\text{for}~z\in\C,
\end{align}
and set
\begin{align*}
F_{1\eps}(u)&=\theta(u)|u|^2\log\left((|u|+\eps)^2\r),~
F_{2\eps}(u)=(1-\theta(u))|u|^2\log\left((|u|+\eps)^2\r)
\quad\text{for}~\eps>0,
\\
F_1(u)&=\theta(u)|u|^2\log (|u|^2),~
F_2(u)=(1-\theta(u))|u|^2\log (|u|^2).
\end{align*}
In the following, we restrict the range of $\eps$ to $(0,1/2)$.
The energy of \eqref{eq:2.1} is rewritten as
\begin{align}
\label{eq:2.19}
E_\eps(u)
=\frac{1}{2}\int|\nabla u|^2-\frac{\lambda}{2}\int F_{1\eps}(u)
-\frac{\lambda}{2}\int F_{2\eps}(u)+\frac{\lambda}{2}\int \mu_\eps(|u|).
\end{align}
Taking $\delta>0$ such that $2+\delta\in(2,2^*)$, we obtain
\begin{align}
\label{eq:2.20}
\int|F_2(u)|\cleq \int|u|^{2+\delta}\cleq \norm[u]_{H^1}^{2+\delta}.
\end{align}
Therefore we note that
\begin{align*}
\text{if $u\in H^1(\R^d)$, then}~u\in W_1{\iff} \int |F_1(u)|<\infty.
\end{align*}

We first check the third and fourth terms on the right-hand side of \eqref{eq:2.19}. 
\begin{lemma}
\label{lem:2.13}
For all $t\in\R$ we have
\begin{align*}
\int \mu_\eps(\abs[u_\eps(t)]) \to \int |u(t)|^2,\quad 
\int F_{2\eps}(u_\eps(t)) \to \int F_2(u(t))
\end{align*}
as $\eps\downarrow0$.
\end{lemma}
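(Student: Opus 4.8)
The plan is to establish the two convergences separately, in both cases exploiting the pointwise convergence $u_\eps(t)\to u(t)$ a.e.\ (after passing to the limit in $L^2_{\rm loc}$, a subsequence converges a.e., but since the target $u(t)$ is the same for the whole family we get the full convergence in measure on bounded sets, which is enough for dominated convergence arguments) together with uniform $H^1$-bounds from \eqref{eq:2.7}. For the first limit, recall $\mu_\eps(s)=\int_0^s 2\tau^2/(\tau+\eps)\,d\tau$. Pointwise in $s\ge0$ one has $0\le 2\tau^2/(\tau+\eps)\le 2\tau$, so $0\le\mu_\eps(s)\le s^2$, and as $\eps\downarrow0$ monotone (or dominated) convergence gives $\mu_\eps(s)\to\int_0^s 2\tau\,d\tau=s^2$. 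Thus $\mu_\eps(|u_\eps(t)|)\to|u(t)|^2$ pointwise a.e., while the uniform domination $\mu_\eps(|u_\eps(t)|)\le|u_\eps(t)|^2$ together with the strong $L^2_{\rm loc}$-convergence $u_\eps(t)\to u(t)$ allows us to pass to the limit by a generalized dominated convergence theorem (Vitali's theorem, using that $\{|u_\eps(t)|^2\}$ is uniformly integrable on any bounded set thanks to the uniform $H^1$-bound and hence uniform $L^{2^*}$-bound). This yields $\int\mu_\eps(|u_\eps(t)|)\to\int|u(t)|^2$.

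For the second limit, the cutoff $1-\theta$ localizes to the region $|u|\ge1/4$, where the logarithm is bounded below and the integrand grows at most like $|u|^{2+\delta}$. More precisely, $F_{2\eps}(u)=(1-\theta(u))|u|^2\log((|u|+\eps)^2)$ is dominated, uniformly in $\eps\in(0,1/2)$, by $C|u|^{2+\delta}$ for any fixed $\delta\in(0,2^*-2)$ — on $\{|u|\le1/2\}\cap\{|u|\ge1/4\}$ it is bounded, and on $\{|u|\ge1/2\}$ one uses $\log((|u|+\eps)^2)\lesssim|u|^\delta$. By \eqref{eq:2.20} and the uniform $H^1$-bound \eqref{eq:2.7}, the family $\{F_{2\eps}(u_\eps(t))\}$ is bounded in $L^{(2+\delta)/(2+\delta)}$... more usefully, the functions $|u_\eps(t)|^{2+\delta}$ are bounded in $L^{2^*/(2+\delta)}$ on $\R^d$, hence uniformly integrable on bounded sets; combined with $u_\eps(t)\to u(t)$ in $L^2_{\rm loc}$ (so a.e.\ on a subsequence, but again the limit is fixed) and the continuity of $z\mapsto(1-\theta(z))|z|^2\log((|z|+\eps)^2)$ with $\eps\to0$, Vitali's convergence theorem on any ball $B_R$ gives $\int_{B_R}F_{2\eps}(u_\eps(t))\to\int_{B_R}F_2(u(t))$. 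Finally, the tail estimate $\int_{|x|>R}|F_{2\eps}(u_\eps(t))|\le C\int_{|x|>R}|u_\eps(t)|^{2+\delta}\to0$ as $R\to\infty$, uniformly in $\eps$ by Hölder and the vanishing of the $L^{2^*}$-mass at infinity combined with the uniform $L^2$-bound — more carefully, one writes $\int_{|x|>R}|u_\eps|^{2+\delta}\le\|u_\eps\|_{L^2(|x|>R)}^{2-\theta'}\|u_\eps\|_{L^{2^*}}^{\theta'(2+\delta)/2^*}\cdots$; the cleanest route is to note $\|u_\eps(t)\|_{L^2}=\|\varphi\|_{L^2}$ and use that $u_\eps(t)\to u(t)$ in $L^2_{\rm loc}$ forces the tail $L^2$-masses to be controlled uniformly, whence the interpolated $L^{2+\delta}$-tails are uniformly small. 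Letting $R\to\infty$ completes the argument.

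The main obstacle is the second convergence, specifically handling the behavior at spatial infinity: unlike the $\mu_\eps$ term which is cleanly dominated by $|u_\eps|^2\in L^1$ with uniform bound $\|\varphi\|_{L^2}^2$, the term $F_{2\eps}$ involves the superquadratic growth $|u|^{2+\delta}$, so neither a single $L^1$ dominating function nor naive dominated convergence applies directly. The resolution is a two-step splitting: uniform integrability on bounded sets (to invoke Vitali locally) plus a uniform-in-$\eps$ smallness of the tails (to control the complement), and the delicate point is extracting the uniform tail control from the conservation of the $L^2$-norm \eqref{eq:2.3} and the strong local convergence \eqref{eq:2.10} rather than from any pointwise domination. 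An alternative, perhaps simpler, is to observe that since $\delta$ can be taken with $2+\delta<2^*$, the embedding $H^1(\R^d)\hookrightarrow L^{2+\delta}(\R^d)$ gives $\{u_\eps(t)\}$ bounded in $L^{2+\delta}$, and then the combination of $u_\eps(t)\to u(t)$ in $L^2_{\rm loc}$ with this reflexive bound yields $u_\eps(t)\to u(t)$ in $L^{2+\delta}_{\rm loc}$ and, together with the weak-in-$L^{2+\delta}$ tail control, one concludes; in any case the technical heart lies in upgrading local to global convergence for the superquadratic integrand.
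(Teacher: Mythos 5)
There is a genuine gap, and it sits exactly where you say the ``technical heart'' lies: the passage from local to global convergence. Your argument rests on the claim that the strong convergence $u_\eps(t)\to u(t)$ in $L^2_{\rm loc}$ together with the conservation $\norm[u_\eps(t)]_{L^2}=\norm[\varphi]_{L^2}$ ``forces the tail $L^2$-masses to be controlled uniformly.'' That implication is false in general: a family of the form $v+w(\cdot-x_\eps)$ with $|x_\eps|\to\infty$ has constant $L^2$-norm and converges in $L^2_{\rm loc}$ to $v$, yet its tails carry a fixed amount of mass. Uniform tail smallness (tightness) follows only if one also knows that the limit has the \emph{same} $L^2$-norm, i.e.\ $\norm[u(t)]_{L^2}=\norm[\varphi]_{L^2}$, which is precisely \eqref{eq:2.16} and is the content of Lemma \ref{lem:2.10} ($u_\eps(t)\to u(t)$ strongly in $L^2(\R^d)$, proved via uniqueness before Section \ref{sec:2.4}). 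You never invoke this, so neither of your two limits is actually closed: the Vitali argument on balls gives $\int_{B_R}\mu_\eps(|u_\eps|)\to\int_{B_R}|u|^2$ and $\int_{B_R}F_{2\eps}(u_\eps)\to\int_{B_R}F_2(u)$ for each fixed $R$, but without tightness the full-space statements do not follow (mass could escape to spatial infinity); your first limit in particular is asserted with no tail discussion at all, and your sketched interpolation for the $|u_\eps|^{2+\delta}$-tails again presupposes uniformly small $L^2$-tails.

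Once Lemma \ref{lem:2.10} is brought in, the proof simplifies well below the Vitali machinery you set up, and this is what the paper does: since $\supp F_{2\eps}\subset\{|z|\ge 1/2\}$, one has the Lipschitz-type bound $\abs[F_{2\eps}(z)-F_2(w)]\le C(\delta)\left(|z|^{1+\delta}+|w|^{1+\delta}\r)|z-w|$ for all $z,w\in\C$, uniformly in $\eps\in(0,1/2)$; Hölder, the uniform $H^1$-bound \eqref{eq:2.7} (hence a uniform $L^{2+2\delta}$-bound), and $\norm[u_\eps(t)-u(t)]_{L^2}\to0$ then give the second convergence in one line, and the first is handled by the standard argument of \cite[Lemma 3.3.7]{C03} (again using global $L^2$-convergence). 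So your overall strategy (pointwise convergence plus uniform integrability plus tails) is workable, but as written it is circular at the tail step; the repair is simply to cite Lemma \ref{lem:2.10} or \eqref{eq:2.16}, after which most of the uniform-integrability apparatus becomes unnecessary.
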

\begin{proof}
The first convergence is easily proved, for example, by applying the argument in \cite[Lemma 3.3.7]{C03}. From the fact
\begin{align*}
\supp F_{2\eps} \subset \{ z\in\C : |z|\ge1/2\},
\end{align*}
we deduce that for any $\delta\in(0,1)$ there exists $C(\delta)>0$ such that
\begin{align*}
\abs[F_{2\eps}(z)-F_2(w)]\le C(\delta)(|z|^{1+\delta}+|w|^{1+\delta})|z-w|
\quad\text{for all}~z,w\in\C.
\end{align*}
By combining \eqref{eq:2.7} and Lemma \ref{lem:2.10}, this yields the second convergence of the claim.
\end{proof}
The following result is already proved in the previous works but we give a self-contained proof to clarify the difference from the case $\lambda<0$.
\begin{proposition}[\cite{CH80, C83}]
\label{prop:2.14}
Let $\lambda>0$. Then, $u\in (C\cap L^\infty)(\R,W_1)$ and $E(u(t))=E(\varphi)$ for all $t\in\R$.
\end{proposition}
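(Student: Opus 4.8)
The plan is to prove Proposition~\ref{prop:2.14} by establishing the energy conservation $\cE(u(t)) = \cE(\varphi)$ (equivalently $E(u(t)) = E(\varphi)$) together with the membership $u(t) \in W_1$ and strong continuity into $W_1$. The starting point is the identity \eqref{eq:2.4}, $E_\eps(u_\eps(t)) = E_\eps(\varphi)$, whose right-hand side converges to $E(\varphi)$ by dominated convergence. So everything reduces to passing to the limit in $E_\eps(u_\eps(t))$, using the decomposition \eqref{eq:2.19}. By Lemma~\ref{lem:2.13}, the terms $\int \mu_\eps(|u_\eps(t)|)$ and $\int F_{2\eps}(u_\eps(t))$ converge to $\int |u(t)|^2$ and $\int F_2(u(t))$ respectively, and by Lemma~\ref{lem:2.11} (together with weak lower semicontinuity, as in Lemma~\ref{lem:2.5}) we have $\|\nabla u(t)\|_{L^2}^2 \le \liminf_\eps \|\nabla u_\eps(t)\|_{L^2}^2$. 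The crux is therefore the term $\int F_{1\eps}(u_\eps(t))$, the part of the logarithmic energy near the origin.

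First I would record the sign structure. Since $\supp\theta \subset \{|z| \le 1/2\}$, on that set $\log((|u_\eps|+\eps)^2) \le 0$, so $F_{1\eps}(u_\eps) \le 0$ and $-\tfrac{\lambda}{2}\int F_{1\eps}(u_\eps(t)) \ge 0$ when $\lambda > 0$; this is exactly the place where the sign of $\lambda$ enters. From \eqref{eq:2.10} we have $u_\eps(t) \to u(t)$ in $L^2_{\rm loc}$, hence (along a subsequence) pointwise a.e.\ on any ball, so $F_{1\eps}(u_\eps(t)) \to F_1(u(t))$ pointwise a.e.; note also $F_{1\eps}(u_\eps) \le F_{1\eps}(u_\eps) \cdot \mathbf{1} \le 0$ is uniformly bounded below on $\{|u_\eps|\le 1/2\}$ by $-\tfrac{2}{e}|u_\eps|$ times a constant (the function $s \mapsto s^2|\log s^2|$ is bounded on $[0,1/2]$), so $|F_{1\eps}(u_\eps(t))| \le C\mathbf{1}_{\{|u_\eps(t)|\le 1/2\}}$ which is not integrable — so I cannot use dominated convergence directly. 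Instead, since $-F_{1\eps}(u_\eps(t)) \ge 0$, Fatou's lemma gives
\begin{align*}
-\int F_1(u(t)) \le \liminf_{\eps\downarrow 0}\Bigl( -\int F_{1\eps}(u_\eps(t))\Bigr).
\end{align*}
Feeding these three facts into \eqref{eq:2.19} yields $\cE(u(t)) \le \liminf_\eps E_\eps(u_\eps(t)) = E(\varphi) = \cE(\varphi)$ after accounting for the $\mu_\eps$ and $|u|^2$ bookkeeping (using \eqref{eq:2.16}); in particular $\int F_1(u(t)) > -\infty$, so $u(t) \in W_1$ for every $t$, and $u \in L^\infty(\R, W_1)$ follows from the uniform bound in \eqref{eq:2.7} plus the uniform energy bound.

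The reverse inequality $\cE(\varphi) \le \cE(u(t))$ is obtained by time-reversal: the equation is autonomous and reversible ($t \mapsto \overline{u(-t)}$ solves the same equation), so running the argument from time $t$ back to time $0$ — using $u(t)$ as initial data and noting $u$ is already known to be the solution by the uniqueness Lemma~\ref{lem:2.9} — gives $\cE(\varphi) \le \cE(u(t))$. Hence $\cE(u(t)) = \cE(\varphi)$ for all $t$, and consequently all three inequalities used above are equalities: in particular $\|\nabla u_\eps(t)\|_{L^2} \to \|\nabla u(t)\|_{L^2}$ and $\int F_{1\eps}(u_\eps(t)) \to \int F_1(u(t))$. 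The first of these, combined with the weak convergence \eqref{eq:2.13}, upgrades to $\nabla u_\eps(t) \to \nabla u(t)$ strongly in $L^2$. Finally, for strong continuity $t \mapsto u(t) \in W_1$: I would argue as in Lemma~\ref{lem:2.11}, using that $\cE$ is conserved, that $t \mapsto u(t)$ is continuous into $H^1$ (Lemma~\ref{lem:2.11}) and weakly continuous into $W_1$, and that the ``potential'' part of the energy is continuous along $H^1$-convergent sequences in the $F_2$ region while, in the $F_1$ region, $H^1 \hookrightarrow L^p$ convergence plus conservation of $\int |u|^2\log(|u|^2)$ forces convergence of $\int \theta(u)|u|^2\log(|u|^2)$ (a norm-convergence-plus-weak-convergence argument in the Orlicz space, cf.\ Appendix~\ref{sec:B}); equivalently, $u(t_n) \to u(t_0)$ in $H^1$ and $\cE(u(t_n)) \to \cE(u(t_0))$ together with the lower-semicontinuity of the Orlicz norm give strong $W_1$-convergence. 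The main obstacle is precisely this last step — controlling the near-origin logarithmic term $\int_{|u|\le 1}|u|^2\log(|u|^2)$ in the limit without any dominating function — and the device that makes it work is the one-sided sign $\lambda > 0$, which turns the offending term into a non-negative quantity amenable to Fatou's lemma and, once energy conservation is in hand, to an equality-in-Fatou argument.
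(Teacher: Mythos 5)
Your proposal follows essentially the same route as the paper's proof: exploit the sign of the near-origin term $F_{1\eps}$ for $\lambda>0$ so that Fatou's lemma, weak lower semicontinuity of $\|\nabla\cdot\|_{L^2}$, and Lemma \ref{lem:2.13} give $u(t)\in W_1$ with $E(u(t))\le E(\varphi)$, obtain the reverse inequality by uniqueness/time reversal, and then upgrade to $C(\R,W_1)$ from energy conservation, Lemma \ref{lem:2.11}, and the Orlicz-space machinery of Appendix \ref{sec:B}. One small correction: the global-in-time bound $u\in L^\infty(\R,W_1)$ does not follow from \eqref{eq:2.7}, which is only locally uniform in time; as in the paper, it comes from the conserved energy and $L^2$-norm after absorbing $\int F_2(u)$ into a fraction of $\|\nabla u\|_{L^2}^2$ via \eqref{eq:2.20} and Gagliardo--Nirenberg interpolation.
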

\begin{proof}
Since $\eps\in(0,1/2)$, we note that ${-F_{1\eps}(u)}$ is the nonnegative term. Hence the first two terms of $E_\eps(u)$ are rewritten as
\begin{align*}
\frac{1}{2}\int|\nabla u|^2-\frac{\lambda}{2}\int F_{1\eps}(u)
=\frac{1}{2}\int|\nabla u|^2+\frac{\lambda}{2}\int |F_{1\eps}(u)|.
\end{align*}
Using the weak lower semicontinuity of the norm for the first term, Fatou's lemma for the second term, and Lemma \ref{lem:2.13}, we deduce that
\begin{align*}
\begin{aligned}
\frac{1}{2}\int|\nabla u(t)|^2+\frac{\lambda}{2}\int |F_1(u(t))|
&\le\liminf_{\eps\to0}\left(E_\eps(u_\eps(t))+\frac{\lambda}{2}\int F_{2\eps}(u_\eps(t))-\frac{\lambda}{2}\int \mu_\eps(u_\eps(t)) \r)
\\
&\le E(\varphi)+\frac{\lambda}{2}\int F_2(u(t))-\frac{\lambda}{2}\int |u(t)|^2
\end{aligned}
\end{align*}
for all $t\in\R$. This yields that
\begin{align*}
u(t)\in W_1,\quad E (u(t))\le E(\varphi)
\quad\text{for all}~t\in\R.
\end{align*}
Combining Lemma \ref{lem:2.10}, we obtain the conservation of the energy
\begin{align}
\label{eq:2.21}
E(u(t))= E(\varphi)\quad\text{for all}~t\in\R.
\end{align}
From \eqref{eq:2.20} and \eqref{eq:2.16} we obtain
\begin{align*}
\frac{1}{4}\int|\nabla u(t)|^2+\frac{\lambda}{2}\int |F_1(u(t))|
&\le E(\varphi)+C(\norm[\varphi]_{L^2})
\end{align*}
for all $t\in\R$. Therefore we deduce that
\begin{align*}
u\in L^\infty(\R, H^1(\R^d))\quad\text{and}\quad t\mapsto 
\int |u(t)|^2\log(|u(t)|^2) \in L^\infty(\R),
\end{align*}
which is equivalent to the fact that $u\in L^\infty(\R,W_1)$. Moreover, it follows from \eqref{eq:2.21} and Lemma \ref{lem:2.11} that
\begin{align*}
t\mapsto \int |u(t)|^2\log(|u(t)|^2) \in C(\R),
\end{align*}
which yields $u\in C(\R,W_1)$.
\end{proof}
For the case $\lambda<0$ the difficulty comes from conflicting signs of the first two terms in $E_\eps(u_\eps)$
\begin{align*}
\frac{1}{2}\int|\nabla u_\eps|^2-\frac{|\lambda|}{2}\int |F_{1\eps}(u_\eps)|
\end{align*}
and the unknown convergence of them. To overcome that, we take advantage of the time-local boundedness of $\{u_\eps\}_{0<\eps<1}$ in $H^1(\R^d)$. 
\begin{proposition}
\label{prop:2.15}
Let $\lambda<0$. Then, $u\in C(\R,W_1)$.
\end{proposition}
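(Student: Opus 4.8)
The plan is to show first that $u(t)\in W_1$ for every $t$, then upgrade to continuity, in the same spirit as in the case $\lambda>0$ but without the sign advantage. The key point is that we already have, from \eqref{eq:2.7}, the time-local uniform bound $\sup_{0<\eps<1}\norm[u_\eps]_{C_T(H^1)}=M_T<\infty$, so the term $\frac{1}{2}\int|\nabla u_\eps(t)|^2$ is under control independently of $\eps$; this lets us avoid the cancellation issue between the two conflicting terms in $E_\eps(u_\eps)$. Concretely, I would rewrite $E_\eps(u_\eps(t))=E_\eps(\varphi)$ (by \eqref{eq:2.4}) in the form \eqref{eq:2.19} and isolate $\int|F_{1\eps}(u_\eps(t))|=-\int F_{1\eps}(u_\eps(t))$ (nonnegative since $\eps<1/2$). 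Then
\begin{align*}
\frac{|\lambda|}{2}\int|F_{1\eps}(u_\eps(t))|
=\frac{1}{2}\int|\nabla u_\eps(t)|^2-E_\eps(\varphi)-\frac{\lambda}{2}\int F_{2\eps}(u_\eps(t))+\frac{\lambda}{2}\int\mu_\eps(|u_\eps(t)|),
\end{align*}
and using \eqref{eq:2.7}, $E_\eps(\varphi)\to E(\varphi)$, and Lemma \ref{lem:2.13}, the right-hand side is bounded uniformly in $\eps\in(0,1/2)$ on $[-T,T]$. By Fatou's lemma applied along $\eps\downarrow0$ together with $u_\eps(t)\to u(t)$ in $L^2_{\rm loc}$ (hence, after passing to a.e.-convergent data via Lemma \ref{lem:2.10} in $L^2$, pointwise a.e.\ along a subsequence) we get $\int|F_1(u(t))|\le\liminf_\eps\int|F_{1\eps}(u_\eps(t))|<\infty$, and combined with \eqref{eq:2.20} this gives $u(t)\in W_1$ with $t\mapsto\int|u(t)|^2\log(|u(t)|^2)$ bounded on compacts, i.e.\ $u\in L^\infty_{\rm loc}(\R,W_1)$.

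Next I would pass to the energy identity and continuity. The lower semicontinuity argument above, applied to $E_\eps(u_\eps(t))$ itself, yields $E(u(t))\le E(\varphi)$ for all $t$ — but here one must be careful: the gradient term and $\int|F_1|$ both contribute with signs that make lower semicontinuity work in the same direction, namely $\frac{1}{2}\liminf\int|\nabla u_\eps(t)|^2+\frac{|\lambda|}{2}\liminf\int|F_{1\eps}(u_\eps(t))|\ge\frac{1}{2}\int|\nabla u(t)|^2+\frac{|\lambda|}{2}\int|F_1(u(t))|$, while $\int F_{2\eps}(u_\eps(t))\to\int F_2(u(t))$ and $\int\mu_\eps(|u_\eps(t)|)\to\int|u(t)|^2$ by Lemma \ref{lem:2.13}; adding these gives $E(u(t))\le\liminf_\eps E_\eps(u_\eps(t))=\lim_\eps E_\eps(\varphi)=E(\varphi)$. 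Reversing time (the problem is time-reversible, or one re-runs the estimate starting from any $t_0$) gives $E(u(t))\ge E(\varphi)$, hence $E(u(t))=E(\varphi)$ for all $t\in\R$.

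Finally, to obtain $u\in C(\R,W_1)$, I would argue as in Proposition \ref{prop:2.14}: it suffices to show $t\mapsto\int|u(t)|^2\log(|u(t)|^2)$ is continuous. We already know $u\in C(\R,H^1(\R^d))$ by Lemma \ref{lem:2.11}, so $\int|\nabla u(t)|^2$ and (by Sobolev embedding and \eqref{eq:2.20}) $\int F_2(u(t))$ and $\int|u(t)|^2$ are continuous in $t$. Writing the conserved energy $E(u(t))=\frac12\int|\nabla u(t)|^2-\frac{\lambda}{2}\int F_1(u(t))-\frac{\lambda}{2}\int F_2(u(t))+\frac{\lambda}{2}\int|u(t)|^2=E(\varphi)$ and solving for $\int F_1(u(t))$ exhibits it as a sum of continuous functions of $t$, hence $\int F_1(u(t))$ is continuous; combined with continuity of $\int F_2(u(t))$ this gives continuity of $t\mapsto\int|u(t)|^2\log(|u(t)|^2)$, and therefore $u\in C(\R,W_1)$.

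The main obstacle is the lower semicontinuity step for $\int|F_1|$: since $F_1$ is not convex and $u_\eps$ converges only in $L^2_{\rm loc}$, I would justify passing to pointwise a.e.\ convergence (on a subsequence, which is harmless because the limit $u$ is already identified) so that Fatou's lemma applies to the nonnegative integrand $|F_{1\eps}(u_\eps)|$; the fact that $-F_{1\eps}(u)\ge0$ for $\eps<1/2$ and that $F_{1\eps}(z)\to F_1(z)$ uniformly on the support of $\theta$ modulo the $\eps$-shift is what makes this go through, and the uniform $H^1$-bound \eqref{eq:2.7} is what prevents the hidden cancellation between the gradient and logarithmic terms from spoiling the estimate.
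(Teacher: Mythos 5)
Your first half (the $L^\infty_{\rm loc}(\R,W_1)$ bound obtained by rearranging $E_\eps(u_\eps(t))=E_\eps(\varphi)$ as in \eqref{eq:2.19}, using the uniform bound \eqref{eq:2.7} and Fatou's lemma along an a.e.\ convergent subsequence) is exactly the paper's argument and is fine. The gap is in the second half. Your claim that the gradient term and $\int|F_1|$ "contribute with signs that make lower semicontinuity work in the same direction" is backwards when $\lambda<0$: in $E(u)$ the singular term enters as $-\tfrac{\lambda}{2}\int F_1(u)=-\tfrac{|\lambda|}{2}\int|F_1(u)|$, i.e.\ with a \emph{negative} sign, so Fatou's inequality $\int|F_1(u(t))|\le\liminf_{\eps}\int|F_{1\eps}(u_\eps(t))|$ controls this contribution from the wrong side, and adding it to the weak lower semicontinuity of $\int|\nabla u_\eps(t)|^2$ yields no comparison between $E(u(t))$ and $\liminf_\eps E_\eps(u_\eps(t))$ at all. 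This is precisely the "conflicting signs" obstruction stated just before Proposition \ref{prop:2.15}, and Remark \ref{rem:2.17} points out that energy conservation for $\lambda<0$ is not known in the class $W_1$; it is only proved in the strictly narrower weighted space, where one can show the actual convergence $\int F_{1\eps}(u_\eps(t))\to\int F_1(u(t))$. Since your continuity step solves for $\int F_1(u(t))$ from the conserved energy, it falls with it, and the time-reversal remark cannot help because even the one-sided bound $E(u(t))\le E(\varphi)$ has not been established (that would require strong $L^2$-convergence of $\nabla u_\eps(t)$ or convergence of $\int|F_{1\eps}(u_\eps(t))|$, neither of which follows from \eqref{eq:2.11}).

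The paper closes the argument without energy conservation: starting from the same identity, it replaces $\tfrac12\int|\nabla u_\eps(t)|^2$ by the Gronwall bound \eqref{eq:2.6}, $\tfrac12 e^{4|\lambda||t|}\|\nabla\varphi\|_{L^2}^2$, so that after Fatou in $\eps$ and Lemma \ref{lem:2.13} one gets an upper bound for $\tfrac{|\lambda|}{2}\int|F_1(u(t))|$ whose limit as $t\to0$ is exactly $\tfrac{|\lambda|}{2}\int|F_1(\varphi)|$; combined with the lower semicontinuity in $t$ of $t\mapsto\int|F_1(u(t))|$ (Fatou again, using $u\in C(\R,L^2(\R^d))$), this gives $\lim_{t\to0}\int|F_1(u(t))|=\int|F_1(\varphi)|$, hence continuity of $t\mapsto\int F_1(u(t))$ at $t=0$, and then $u\in C(\R,W_1)$ by Lemma \ref{lem:2.11} (and the argument at a general time $t_0$ is identical). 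If you want to keep your structure, you should replace the energy-conservation step by this one-sided $\limsup$ estimate; as written, the conservation law is both unproved and, according to the paper, genuinely open in $W_1$ for $\lambda<0$.
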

\begin{proof}
First we show that $u\in L^\infty_{\rm loc}(\R,W_1)$. 
It follows from \eqref{eq:2.19} and \eqref{eq:2.4} that for any $T>0$ and $t\in[-T,T]$,
\begin{align*}
\frac{|\lambda|}{2}\int |F_{1\eps}(u_\eps(t))|
&=\frac{\lambda}{2}\int F_{1\eps}(u_\eps(t))
\\
&=-E_\eps(u_\eps(t))+\frac{1}{2}\int|\nabla u_\eps(t)|^2
-\frac{\lambda}{2}\left( \int F_{2\eps}(u_\eps(t))-\int \mu_\eps(|u_\eps(t)|)\r).
\end{align*}
Noting that \eqref{eq:2.7}, we obtain from Fatou's lemma and \eqref{eq:2.7} that
\begin{align*}
\frac{|\lambda|}{2}\int |F_1(u(t))|
\le
\liminf_{\eps\to0}\frac{|\lambda|}{2}\int |F_{1\eps}(u_\eps(t))|
\le -E(\varphi)+C(M_T) 
\end{align*}
for all $t\in [-T,T]$. Therefore we deduce that
\begin{align*}
t\mapsto \int |u(t)|^2\log(|u(t)|^2) \in L^\infty_{\rm loc}(\R),
\end{align*}
which shows the desired claim. 

Next we show $u\in C(\R, W_1)$. It is easily verified that
$
t\mapsto \int F_2(u(t)) \in C(\R)$,
and therefore we need to check that $t\mapsto \int F_1(u(t))\in C(\R)$. As in the proof of Lemma \ref{lem:2.11}, it is enough to show the continuity at $t=0$. From the calculation in the previous paragraph we obtain
\begin{align*}
\frac{|\lambda|}{2}\int |F_{1\eps}(u_\eps(t))|
&=-E_\eps(u_\eps(t))+\frac{1}{2}\int|\nabla u_\eps(t)|^2
-\frac{\lambda}{2}\left( \int F_{2\eps}(u_\eps(t))-\int \mu_\eps(|u_\eps(t)|)\r)
\\
&\le -E_\eps(\varphi)+\frac{1}{2}e^{4\abs[\lambda]\abs[t]}\norm[\nabla\varphi]_{L^2}^2
-\frac{\lambda}{2}\left( \int F_{2\eps}(u_\eps(t))-\int \mu_\eps(|u_\eps(t)|)\r)
\end{align*}
for all $t\in\R$. It follows from Fatou's lemma and Lemma \ref{lem:2.13} that
\begin{align*}
\frac{|\lambda|}{2}\int |F_1(u(t))|
&\le -E(\varphi)+\frac{1}{2}e^{4\abs[\lambda]\abs[t]}\norm[\nabla\varphi]_{L^2}^2
-\frac{\lambda}{2}\int F_{2}(u(t))+\frac{\lambda}{2}\int |u(t)|^2.
\end{align*}
Passing to the limit as $t\to0$ we deduce that
\begin{align*}
\limsup_{t\to0} \frac{|\lambda|}{2}\int |F_1(u(t))|
&\le -E(\varphi)+\frac{1}{2}\norm[\nabla\varphi]_{L^2}^2
-\frac{\lambda}{2}\int F_{2}(\varphi)+\frac{\lambda}{2}\int |\varphi|^2
\\
&=  \frac{\lambda}{2}\int F_1(\varphi)=\frac{|\lambda|}{2}\int \abs[F_1(\varphi)].
\end{align*}
On the other hand, it follows from Fatou's lemma that
$
t\mapsto\int |F_1(u(t))|
$
is lower semicontinuous. Therefore, combining with the previous calculation, we deduce that $t\mapsto \int F_1(u(t))$ is continuous at $t=0$. Hence, it follows from Lemma \ref{lem:2.11} that $u\in C(\R,W_1)$.
\end{proof}
Finally, it follows from $u\in C(\R, W_1)$ and \cite[Lemma 2.6]{C83} that 
\begin{align*}
i\pt_t u+\Delta u+\lambda u\log(|u|^2)=0\quad\text{in}~W_1^*
\end{align*}
for all $t\in\R$. This completes the proof of Theorem \ref{thm:1.1}. 
\begin{remark}
\label{rem:2.16}
Unlike the case of $\lambda>0$, $u\in L^\infty(\R,W_1)$ cannot hold when $\lambda<0$. In fact it is proved in \cite{CG18} that 
in this case the solution of \eqref{eq:1.1} for $\varphi\in H^1(\R^d)\cap\scF(H^1)$ satisfies 
\begin{align*}
\norm[\nabla u(t)]_{L^2}^2 \sim \log t 
\quad\text{as}~t\to\infty.
\end{align*}
\end{remark}
\begin{remark}
\label{rem:2.17}
When $\lambda<0$, it seems non-trivial whether the conservation of the energy holds in the solution class of $W_1$.
If we consider 
$\varphi\in H^1(\R^d)\cap\scF(H^\alpha)$ for some $\alpha\in(0,1]$, then one can prove that
\begin{align*}
\int F_{1\eps}(u_\eps (t))\to \int F_1(u (t))
\quad\text{for}~ t\in\R,
\end{align*}
which yields \eqref{eq:2.21}. However, this discussion is done in a strictly narrower solution class than $W_1$.
\end{remark}


\section{The Cauchy problem in the $H^2$-energy space}
\label{sec:3}

In this section we construct the solution of \eqref{eq:1.1} in the $H^2$-energy space $W_2$.
We first note that for any $\eps>0$ and $\varphi\in W_2\subset H^2(\R^d)$ there exists a unique solution 
\begin{align*}
u_\eps\in C(\R, H^2(\R^d))\cap C^1(\R, L^2(\R^d))
\end{align*}
to \eqref{eq:2.1}. This is proved by applying the regularity result\footnote{For example one can apply the argument of \cite[Proposition 4.3.9]{CH98} to \eqref{eq:2.1}.} to the $H^1$-solution constructed in Section \ref{sec:2.1}.
Alternatively, the $H^2$-solution of \eqref{eq:2.1} can be reconstructed in the manner described in Section \ref{sec:2.1}.
\subsection{Construction of weak $H^2$-solutions}
\label{sec:3.1}
As seen in \cite{K87, C03} we obtain $H^2$-estimates by $L^2$-estimates of time derivatives. We first derive analogous estimates of Lemma \ref{lem:2.4} for time derivatives.
\begin{lemma}[\cite{CG18}]
\label{lem:3.1}%
For all $t\in\R$ we have
\begin{align}
\label{eq:3.1}%
\norm[\pt_t u_\eps(t)]_{L^2}^2 
\le e^{4\abs[\lambda]\abs[t]}\norm[\pt_tu_\eps (0)]_{L^2}^2.
\end{align}
\end{lemma}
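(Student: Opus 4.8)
The plan is to mimic the proof of Lemma~\ref{lem:2.4}, replacing the energy identity for $\nabla u_\eps$ by the one for $\pt_t u_\eps$. Formally, set $w=\pt_t u_\eps$; differentiating the approximate equation \eqref{eq:2.1} in $t$ gives
\begin{align*}
i\pt_t w+\Delta w+2\lambda\pt_t\bigl(u_\eps\log(|u_\eps|+\eps)\bigr)=0,
\end{align*}
so that
\begin{align*}
\frac{d}{dt}\norm[w(t)]_{L^2}^2
&=2\Re\rbra[\pt_t w,w]
=2\Im\rbra[i\pt_t w,-w]
\\
&=2\Im\rbra[\Delta w+2\lambda\pt_t\bigl(u_\eps\log(|u_\eps|+\eps)\bigr),w]
=4\lambda\Im\rbra[\pt_t\bigl(u_\eps\log(|u_\eps|+\eps)\bigr),w],
\end{align*}
since $\Im\rbra[\Delta w,w]=0$. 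The point is that $\pt_t\bigl(u_\eps\log(|u_\eps|+\eps)\bigr)=\pt_t u_\eps\,\log(|u_\eps|+\eps)+\tfrac{u_\eps}{|u_\eps|+\eps}\,\pt_t|u_\eps|$, and the first summand is $w\log(|u_\eps|+\eps)$, which is purely a real multiple of $w$ in the sense that $\Im\rbra[w\log(|u_\eps|+\eps),w]=\int\log(|u_\eps|+\eps)\,\Im(w\overline{w})=0$. So only the second summand survives:
\begin{align*}
\frac{d}{dt}\norm[w(t)]_{L^2}^2=4\lambda\Im\rbra[\frac{u_\eps}{|u_\eps|+\eps}\pt_t|u_\eps|,w].
\end{align*}
Now $\pt_t|u_\eps|$ is real, $\bigl|\tfrac{u_\eps}{|u_\eps|+\eps}\bigr|\le1$, and $\bigl|\pt_t|u_\eps|\bigr|=\bigl|\Re(\tfrac{\overline{u_\eps}}{|u_\eps|}\pt_t u_\eps)\bigr|\le|w|$ pointwise (on $\{u_\eps\ne0\}$; the set $\{u_\eps=0\}$ contributes nothing), exactly as the inequality $\abs[\nabla|u_\eps|]\le|\nabla u_\eps|$ was used in Lemma~\ref{lem:2.4}. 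Hence Cauchy--Schwarz gives
\begin{align*}
\frac{d}{dt}\norm[w(t)]_{L^2}^2\le 4|\lambda|\,\norm[w(t)]_{L^2}^2\quad\text{for all }t\in\R,
\end{align*}
and Gronwall's lemma yields \eqref{eq:3.1}.

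The only genuinely delicate point is justifying these manipulations rigorously: one needs $\pt_t u_\eps\in C(\R,L^2)$ (which holds, since $u_\eps\in C^1(\R,L^2)$ for $W_2$ data as recalled at the start of Section~\ref{sec:3}) and one needs to make sense of differentiating the equation in $t$ and of the chain rule for $\pt_t\bigl(u_\eps\log(|u_\eps|+\eps)\bigr)$ at the level of $L^2$ functions, together with the a priori regularity $\pt_t u_\eps\in C(\R,H^2)$ (equivalently $u_\eps\in C^1(\R,H^2)\cap C^2(\R,L^2)$ for $H^2$ data, obtained from the well-posedness/regularity theory for \eqref{eq:2.1}, whose nonlinearity $g_\eps$ has almost linear growth). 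With that regularity in hand, $t\mapsto\norm[\pt_t u_\eps(t)]_{L^2}^2$ is $C^1$ and the differential inequality above is rigorous; alternatively one can run the argument on the difference quotients $\tfrac{1}{h}(u_\eps(\cdot+h)-u_\eps(\cdot))$, which solve a perturbed equation for which the analogue of \eqref{eq:2.8}/\eqref{eq:1.5} applies, and pass to the limit $h\to0$ using $u_\eps\in C^1(\R,L^2)$ — this avoids invoking second-order time regularity. I expect the bookkeeping in this last justification to be the main (but routine) obstacle; the core computation is a verbatim adaptation of Lemma~\ref{lem:2.4}. Since the statement is attributed to \cite{CG18}, one may simply cite the corresponding estimate there, noting that the small modification $g_\eps(u)=2u\log(|u|+\eps)$ used here behaves identically for this purpose.
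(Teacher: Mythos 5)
Your proof is correct and is essentially the paper's own argument: an energy estimate for $\pt_t u_\eps$ based on the pointwise bounds $\abs[\pt_t|u_\eps|]\le\abs[\pt_t u_\eps]$ and $\abs[u_\eps/(|u_\eps|+\eps)]\le1$ followed by Gronwall, with the paper computing directly with $2\Im\rbra[i\pt_t^2u_\eps,\pt_t u_\eps]$ and tacitly relying on the same $H^2$-regularity of $u_\eps$ you invoke. Only a harmless sign slip: $2\Re\rbra[\pt_t w,w]=2\Im\rbra[i\pt_t w,w]$, not $2\Im\rbra[i\pt_t w,-w]$, so the correct identity is $\frac{d}{dt}\norm[w]_{L^2}^2=-4\lambda\Im\rbra[\tfrac{u_\eps}{|u_\eps|+\eps}\pt_t|u_\eps|,w]$; since you only use its modulus, the differential inequality and the conclusion are unaffected.
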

\begin{proof}
For completeness we give a proof. Similarly to the proof of Lemma~\ref{lem:2.4}, we have
\begin{align*}
\frac{d}{dt}\norm[\pt_t u_\eps]_{L^2}^2
&=2\Im\rbra[i\pt_t^2u_\eps,\pt_t u_\eps]
\\
&=2\Im\rbra[\pt_t\left\{-\Delta u_\eps -2\lambda u_\eps\log(|u_\eps|+\eps) \r\},\pt_t u_\eps]
\\
&=-4\lambda\Im\rbra[\frac{u_\eps}{|u_\eps|+\eps}\pt_t|u_\eps|,\pt_t u_\eps].
\end{align*}
Noting that $\abs[\pt_t |u_\eps|]\le |\pt_t u_\eps|$ we obtain
\begin{align*}
\frac{d}{dt}\norm[\pt_t u_\eps(t)]_{L^2}^2\le 4|\lambda|\norm[\pt_t u_\eps(t)]_{L^2}^2\quad\text{for all}~t\in\R.
\end{align*}
Then the result follows from Gronwall's lemma.
\end{proof}
It follows from \eqref{eq:3.1} and the equation \eqref{eq:2.1} that
\begin{align*}
\norm[\pt_t u_\eps(t)]_{L^2}^2 
&\le e^{4\abs[\lambda]\abs[t]}
\norm[-\Delta\varphi-2\lambda\varphi\log(|\varphi|+\eps)]_{L^2}^2 
\\
&\cleq e^{4\abs[\lambda]\abs[t]} \left(\norm[\Delta\varphi]_{L^2}^2+\norm[\varphi \log(|\varphi|^2) ]_{L^2}^2 \r).
\end{align*}
Combined with \eqref{eq:2.7}, this yields that 
\begin{align}
\label{eq:3.2}%
N_T\ce&\sup_{\eps\in(0,1)}
\left(\norm[u_\eps]_{C_T(H^1)}+\norm[\pt_t u_\eps]_{C_T(L^2)} \r)
\le C(T, \norm[\varphi]_{H^2},\norm[\varphi \log(|\varphi|^2) ]_{L^2})
\end{align}
for any $T>0$.
Applying the argument in Sections~\ref{sec:2.2} and \ref{sec:2.3}, there exists a unique solution $u\in C(\R,H^1(\R^d))$ to \eqref{eq:1.1} in the sense of \eqref{eq:1.9}, and $u_\eps(t)\wto u(t)$ weakly in $H^1(\R^d)$ for all $t\in\R$.
Moreover, we deduce from \eqref{eq:3.2} that 
\begin{gather*}
u\in W^{1,\infty}_{\rm loc}(\R,L^2(\R^d)),
\quad
\pt_t u_\eps(t) \wto \pt_t u(t)\quad\text{in}~L^2(\R^d)
\end{gather*}
for all $t\in\R$. Furthermore it follows from \eqref{eq:1.9} that  
\begin{align*}
\pt_t u \in C_w(\R, L^2_{\rm loc}(\R^d)),\quad \Delta u \in (C_w\cap L^\infty_{\rm loc})(\R, L^2_{\rm loc}(\R^d))
\end{align*}
and \eqref{eq:1.10} holds for any $\Omega\subset\subset\R^d$.
\begin{remark}
\label{rem:ch}
In \cite{CH80} (see also \cite{Haraux81}) the authors apply the theory of maximal monotone operators to construct the solution belonging to the set
\begin{align*}
Z=\left\{\varphi\in L^2(\R^d)\cap H^1_{\rm loc}(\R^d) : \Delta \varphi+\lambda\varphi\log(|\varphi|^2) \in L^2(\R^d) \r\}.
\end{align*}
One can recover this result by using the equation \eqref{eq:2.1} and the limiting procedure based on \eqref{eq:3.1}.
\end{remark}

\subsection{A priori estimates in $W_2$}
\label{sec:3.2}
We now construct solutions in the $H^2$-energy space $W_2$ when $\lambda>0$.
The following lemma plays a key role in obtaining a priori estimates in $W_2$. 
\begin{lemma}
\label{lem:3.2}
For all $t\in\R$ the following relation holds:
\begin{align}
\label{eq:3.3}%
\norm[\pt_t u_\eps]_{L^2}^2
=
\begin{aligned}[t]
&\norm[\Delta u_\eps]_{L^2}^2-4\lambda\Re\rbra[\nabla u_\eps\cdot\nabla|u_\eps|, \frac{u_\eps}{|u_\eps|+\eps}]_{L^2}
\\
&{}~-4\lambda\rbra[ |\nabla u_\eps|^2,\log(|u_{\eps}|+\eps)]_{L^2}+4\lambda^2\norm[u_\eps\log(|u_\eps|+\eps)]_{L^2}^2,
\end{aligned}
\end{align}
where $u_\eps$ is abbreviated as $u_\eps=u_\eps(t)$.
\end{lemma}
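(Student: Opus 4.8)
The plan is to derive \eqref{eq:3.3} by starting from the equation \eqref{eq:2.1}, which gives the pointwise identity $\pt_t u_\eps = i\Delta u_\eps + 2i\lambda u_\eps\log(|u_\eps|+\eps)$, and then expanding the $L^2$-norm of the right-hand side. Writing $h_\eps = u_\eps\log(|u_\eps|+\eps)$, we have
\begin{align*}
\norm[\pt_t u_\eps]_{L^2}^2 = \norm[\Delta u_\eps]_{L^2}^2 + 4\lambda^2\norm[h_\eps]_{L^2}^2 + 4\lambda\,\Re\rbra[i\Delta u_\eps, i h_\eps]_{L^2},
\end{align*}
so the whole task reduces to computing the cross term $\Re\rbra[i\Delta u_\eps, ih_\eps]_{L^2} = \Re\rbra[\Delta u_\eps, h_\eps]_{L^2} = -\Re\rbra[\nabla u_\eps, \nabla h_\eps]_{L^2}$ after an integration by parts (justified since $u_\eps \in H^2(\R^d)$ and $h_\eps \in H^1(\R^d)$, the latter because $g_\eps$ has almost linear growth). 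The key is then to compute $\nabla h_\eps$ via the chain rule: $\nabla h_\eps = \log(|u_\eps|+\eps)\,\nabla u_\eps + \dfrac{u_\eps}{|u_\eps|+\eps}\,\nabla|u_\eps|$. Taking the inner product with $\nabla u_\eps$ and taking real parts, the first piece contributes $-\Re\bigl(\log(|u_\eps|+\eps)\,|\nabla u_\eps|^2\bigr) = -|\nabla u_\eps|^2\log(|u_\eps|+\eps)$ (real since the logarithm is real-valued), and the second piece contributes $-\Re\Bigl(\dfrac{u_\eps}{|u_\eps|+\eps}\,\overline{\nabla u_\eps}\cdot\nabla|u_\eps|\Bigr)$. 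Multiplying by $4\lambda$ and integrating yields exactly the two middle terms of \eqref{eq:3.3}.

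For the chain-rule step I would be slightly careful about differentiability of $|u_\eps|$ where $u_\eps$ vanishes, but since $\eps > 0$ the function $s \mapsto \log(s+\eps)$ is smooth on $[0,\infty)$, and the standard fact that $\nabla|u_\eps| = \Re(\overline{u_\eps/|u_\eps|}\,\nabla u_\eps)$ a.e.\ (with $\nabla|u_\eps| = 0$ a.e.\ on $\{u_\eps = 0\}$, cf.\ \eqref{eq:3.10}) makes all the manipulations rigorous; alternatively one can first establish the identity for the regularized modulus $\sqrt{|u_\eps|^2 + \delta}$ and pass $\delta \downarrow 0$. The only genuinely nontrivial technical point is the integration by parts moving $\Delta u_\eps$ onto $h_\eps$: one needs $h_\eps \in H^1(\R^d)$ with the gradient formula above, which follows from $u_\eps \in H^1(\R^d)$ together with the boundedness of $\log(|u_\eps|+\eps)$ on any set where $|u_\eps|$ is bounded and the at-most-logarithmic growth at infinity (controlled by \eqref{eq:1.4}), so $h_\eps$ and $\nabla h_\eps$ lie in $L^2$; then the identity $\rbra[\Delta u_\eps, h_\eps]_{L^2} = -\rbra[\nabla u_\eps, \nabla h_\eps]_{L^2}$ holds by density of $C_c^\infty$ in $H^1$ and $u_\eps \in H^2$. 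I do not expect any serious obstacle here — this is a formal computation made rigorous by the $\eps$-regularization — so the proof is essentially a bookkeeping exercise once the gradient of $h_\eps$ is written out correctly.
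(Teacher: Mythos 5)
Your proposal is correct and takes essentially the same route as the paper: both expand $\Vert\Delta u_\eps+2\lambda u_\eps\log(|u_\eps|+\eps)\Vert_{L^2}^2$ using the equation \eqref{eq:2.1} and evaluate the cross term by a single integration by parts, justified by the $H^2$-regularity of $u_\eps$ and the $\eps$-regularization. The paper organizes the cross term via the identity $2\Re(\overline{u_\eps}\,\Delta u_\eps)=\Delta|u_\eps|^2-2|\nabla u_\eps|^2$ rather than by differentiating $u_\eps\log(|u_\eps|+\eps)$ with the product rule, but this is only a cosmetic difference in bookkeeping.
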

\begin{proof}
It follows from \eqref{eq:2.1} that
\begin{align*}
\norm[\pt_t u_\eps]_{L^2}^2&=\norm[\Delta u_\eps+2\lambda u_\eps\log(|u_\eps|+\eps)]_{L^2}^2
\\
&=\norm[\Delta u_\eps]_{L^2}^2+4\lambda\Re\rbra[\Delta u_\eps,u_\eps\log(|u_\eps|+\eps)]_{L^2}+4\lambda^2\norm[u_\eps\log(|u_\eps|+\eps)]_{L^2}^2.
\end{align*}
We note that
\begin{align*}
2\Re\rbra[\Delta u_\eps,u_\eps\log(|u_\eps|+\eps)]_{L^2}&=\rbra[\Delta|u_\eps|^2-2\abs[\nabla u_\eps]^2, \log(|u_\eps|+\eps)]_{L^2},
\\
\rbra[\Delta|u_\eps|^2, \log(|u_\eps|+\eps)]_{L^2}
&=-\rbra[\nabla(|u_\eps|^2),\frac{\nabla|u_\eps|}{|u_\eps|+\eps}]_{L^2}
=-2\Re\rbra[\overline{u_\eps}\nabla u_\eps,\frac{\nabla|u_\eps|}{|u_\eps|+\eps}]_{L^2}.
\end{align*}
Combining these equalities, we obtain that
\begin{align*}
\norm[\pt_t u_\eps]_{L^2}^2&=\norm[\Delta u_\eps]_{L^2}^2
+4\lambda\Re\rbra[\Delta u_\eps,u_\eps\log(|u_\eps|+\eps)]_{L^2}+4\lambda^2\norm[u_\eps\log(|u_\eps|+\eps)]_{L^2}^2
\\
&=
\begin{aligned}[t]
&\norm[\Delta u_\eps]_{L^2}^2
-4\lambda\Re\rbra[\overline{u_\eps}\nabla u_\eps,\frac{\nabla|u_\eps|}{|u_\eps|+\eps}]_{L^2}
\\
&{}~-4\lambda\rbra[|\nabla u_\eps|^2,\log(|u_\eps|+\eps)]_{L^2}
+4\lambda^2\norm[u_\eps\log(|u_\eps|+\eps)]_{L^2}^2.
\end{aligned}
\end{align*}
This completes the proof.
\end{proof}
Based on the relation \eqref{eq:3.3}, we prove that $u(t)\in W_2$ for all $t\in\R$ through the limit $\eps\downarrow0$. 
\begin{proposition}
\label{prop:3.3}
Let $\lambda>0$. Then $u\in L^\infty_{\rm loc}(\R, W_2)$.
\end{proposition}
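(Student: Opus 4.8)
The plan is to derive an $\eps$-uniform a priori bound on $\norm[\Delta u_\eps]_{L^2}^2+\norm[u_\eps\log(|u_\eps|+\eps)]_{L^2}^2$ on each compact time interval $[-T,T]$, and then pass to the limit $\eps\downarrow0$ using weak lower semicontinuity and Fatou to conclude $u(t)\in W_2$ with a uniform-in-$t$ bound. The starting point is the identity \eqref{eq:3.3} from Lemma \ref{lem:3.2}. First I would rewrite the second term on the right-hand side: since $\nabla u_\eps\cdot\nabla|u_\eps|$ paired against $u_\eps/(|u_\eps|+\eps)$ has real part equal to $\abs[\nabla|u_\eps|]^2\,|u_\eps|/(|u_\eps|+\eps)$ (using $\Re(\overline{u_\eps}\nabla u_\eps)=|u_\eps|\nabla|u_\eps|$), this term is $-4\lambda\int \frac{|u_\eps|}{|u_\eps|+\eps}\abs[\nabla|u_\eps|]^2\le 0$ when $\lambda>0$, so it may be dropped. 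Likewise I split the third term as $-4\lambda(\int_{|u_\eps|\le1}+\int_{|u_\eps|\ge1})|\nabla u_\eps|^2\log(|u_\eps|+\eps)$; on $\{|u_\eps|\le1\}$ (for $\eps<1/2$, say, so $|u_\eps|+\eps<3/2$) the logarithm need not be negative, but on $\{|u_\eps|\le 1\}$ we have $\log(|u_\eps|+\eps)\le\log(3/2)$, hence $-4\lambda\int_{|u_\eps|\le1}|\nabla u_\eps|^2\log(|u_\eps|+\eps)\ge -4\lambda\log(3/2)\norm[\nabla u_\eps]_{L^2}^2$, which is controlled by $N_T$ via \eqref{eq:3.2}. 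More importantly, on $\{|u_\eps|\ge1\}$ the integrand $|\nabla u_\eps|^2\log(|u_\eps|+\eps)\ge0$, so with the sign $-4\lambda<0$ this piece is $\le0$ and can be dropped — wait, that is the wrong direction for an upper bound on $\norm[\Delta u_\eps]_{L^2}^2$; let me instead solve \eqref{eq:3.3} for $\norm[\Delta u_\eps]_{L^2}^2+4\lambda^2\norm[u_\eps\log(|u_\eps|+\eps)]_{L^2}^2$ is not right either. The correct move is: from \eqref{eq:3.3}, bringing the nonpositive terms (the dropped second term and the $\{|u_\eps|\ge1\}$ part of the third) to the left,
\begin{align*}
\norm[\Delta u_\eps]_{L^2}^2+4\lambda\!\!\int_{|u_\eps|\ge1}\!\!|\nabla u_\eps|^2\log(|u_\eps|+\eps)+4\lambda^2\norm[u_\eps\log(|u_\eps|+\eps)]_{L^2}^2
\le \norm[\pt_t u_\eps]_{L^2}^2+4\lambda\log(3/2)\norm[\nabla u_\eps]_{L^2}^2,
\end{align*}
and every term on the left is nonnegative while the right-hand side is bounded by $C(N_T)$ via \eqref{eq:3.2}. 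This yields the uniform bounds $\sup_{\eps\in(0,1/2)}\sup_{t\in[-T,T]}(\norm[\Delta u_\eps(t)]_{L^2}+\norm[u_\eps(t)\log(|u_\eps(t)|+\eps)]_{L^2})\le C(T,\norm[\varphi]_{H^2},\norm[\varphi\log(|\varphi|^2)]_{L^2})$.

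Next I would upgrade these $\eps$-uniform bounds to the conclusion $u\in L^\infty_{\rm loc}(\R,W_2)$. For the $H^2$-part: we already know $u_\eps(t)\wto u(t)$ in $H^1$; combining the uniform $\norm[\Delta u_\eps(t)]_{L^2}$ bound with the $L^2$-convergence and an argument as in Lemma \ref{lem:2.5} (characterizing $H^2$ via $\abs[\int u(t)\Delta\psi]=\abs[\int\Delta u(t)\,\psi]\le C\norm[\psi]_{L^2}$ for $\psi\in C_c^\infty$, using that $\int u_\eps(t)\Delta\psi=\int\Delta u_\eps(t)\psi$ and passing to the limit), we get $u(t)\in H^2(\R^d)$ with $\norm[\Delta u(t)]_{L^2}\le C$ uniformly on $[-T,T]$, hence $\norm[u(t)]_{H^2}\le C$ by elliptic regularity combined with the $L^2$-conservation \eqref{eq:2.16}. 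For the logarithmic part: since $u_\eps(t)\to u(t)$ in $L^2_{\rm loc}$, along a subsequence pointwise a.e., so $u_\eps(t)\log(|u_\eps(t)|+\eps)\to u(t)\log|u(t)|$ pointwise a.e.; Fatou then gives $\norm[u(t)\log(|u(t)|^2)]_{L^2}\le 2\liminf_\eps\norm[u_\eps(t)\log(|u_\eps(t)|+\eps)]_{L^2}\le C$ uniformly on $[-T,T]$. Together these say exactly $u(t)\in W_2$ for every $t$ with a bound uniform on compact intervals, i.e. $u\in L^\infty_{\rm loc}(\R,W_2)$.

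The step I expect to be the main obstacle is the sign analysis of the right-hand side of \eqref{eq:3.3} — specifically making the decomposition of the term $-4\lambda\rbra[|\nabla u_\eps|^2,\log(|u_\eps|+\eps)]_{L^2}$ genuinely work to bound $\norm[\Delta u_\eps]_{L^2}$ from above rather than just from below. The key realization is that one should not try to bound $\norm[\Delta u_\eps]_{L^2}^2$ in isolation but rather the \emph{sum} $\norm[\Delta u_\eps]_{L^2}^2 + (\text{nonnegative }W_2\text{-type terms})$, exploiting that when $\lambda>0$ the cross term $4\lambda\Re\rbra[\Delta u_\eps, u_\eps\log(|u_\eps|+\eps)]_{L^2}$ decomposes into a nonpositive gradient-modulus piece plus the manifestly nonpositive contribution $-4\lambda\int_{|u_\eps|\ge1}|\nabla u_\eps|^2\log(|u_\eps|+\eps)$ plus a harmless piece supported on $\{|u_\eps|\le1\}$ controlled by $\norm[\nabla u_\eps]_{L^2}^2$. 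This is precisely the mechanism flagged in the introduction: "when $\lambda>0$, this term can be dropped as a non-negative term." A secondary technical point is keeping track that $\mu_\eps$ and the $\{|u_\eps|\le1\}$-logarithm contributions are uniformly bounded, which follows from $0\le\mu_\eps(s)\le s^2$ and $\log(|u_\eps|+\eps)\le\log(3/2)$ there; these are routine once the sign structure is in hand.
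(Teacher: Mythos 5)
The limiting half of your argument (weak convergence of $\Delta u_\eps(t)$ in $L^2$ as in Lemma \ref{lem:2.5}, Fatou for the logarithmic term) is the same as the paper's, but the central a priori estimate rests on a sign/bookkeeping error, and as a result the actual difficulty of the proof is never addressed. Solving \eqref{eq:3.3} for the quantity you need to bound gives
\begin{align*}
\norm[\Delta u_\eps]_{L^2}^2+4\lambda^2\norm[u_\eps\log(|u_\eps|+\eps)]_{L^2}^2
&=\norm[\pt_t u_\eps]_{L^2}^2+4\lambda\int\frac{|u_\eps|}{|u_\eps|+\eps}\abs[\nabla|u_\eps|]^2
\\
&\quad+4\lambda\int_{|u_\eps|\le1}|\nabla u_\eps|^2\log(|u_\eps|+\eps)
+4\lambda\int_{|u_\eps|\ge1}|\nabla u_\eps|^2\log(|u_\eps|+\eps).
\end{align*}
The first two terms on the right are $\le\norm[\pt_t u_\eps]_{L^2}^2+4\lambda\norm[\nabla u_\eps]_{L^2}^2\le C(N_T)$ (so the second term of \eqref{eq:3.3} cannot simply be ``dropped'', but it is harmless), and the third is $\le 4\lambda\log(3/2)\norm[\nabla u_\eps]_{L^2}^2$ after discarding its nonpositive near-origin part, where $\log(|u_\eps|+\eps)<0$; that near-origin part is the only term the introduction's phrase ``dropped as a non-negative term'' refers to. The last term, $4\lambda\int_{|u_\eps|\ge1}|\nabla u_\eps|^2\log(|u_\eps|+\eps)\ge0$, appears with a plus sign on the side that must be estimated: it is not a favorable term that can be kept next to $\norm[\Delta u_\eps]_{L^2}^2$. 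Your displayed inequality therefore does not follow from \eqref{eq:3.3}; comparing with the identity above, it would amount to controlling $8\lambda\int_{|u_\eps|\ge1}|\nabla u_\eps|^2\log(|u_\eps|+\eps)$ by quantities of size $\norm[\nabla u_\eps]_{L^2}^2$, for which nothing is available ($N_T$ only bounds $H^1$ and $\pt_t$-$L^2$ norms, while this integrand grows like $|\nabla u_\eps|^2|u_\eps|^\delta$).

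This missing term is precisely where the paper's main analytic step enters: on $\{|u_\eps|\ge1/4\}$ one has $\log(|u_\eps|+\eps)\cleq\log(1+|u_\eps|)\cleq|u_\eps|^{\delta}$, and H\"older together with the interpolation inequality \eqref{eq:3.4} yields
\begin{align*}
\int|\nabla u_\eps|^2|u_\eps|^\delta
\cleq\norm[\Delta u_\eps]_{L^2}^{1+\frac{d}{2p}}\norm[\varphi]_{L^2}^{1-\frac{d}{2p}+\frac{2}{p}}
\le\frac12\norm[\Delta u_\eps]_{L^2}^2+C(\norm[\varphi]_{L^2}),
\end{align*}
with $p$ fixed so that $1+\frac{d}{2p}<2$ and $\delta=2/p$; the $\frac12\norm[\Delta u_\eps]_{L^2}^2$ is then absorbed into the left-hand side, which is what produces the uniform bound \eqref{eq:3.5}. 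Without this interpolation-and-absorption argument (or some substitute for it) the $\eps$-uniform bound on $\norm[\Delta u_\eps(t)]_{L^2}$ and $\norm[u_\eps(t)\log(|u_\eps(t)|+\eps)]_{L^2}$ is not established, so the proposal has a genuine gap exactly at the step you identified as the main obstacle; your proposed resolution of that obstacle is based on placing the $\{|u_\eps|\ge1\}$ term on the wrong side of the identity.
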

\begin{proof}
We take the same function $\theta\in C^1(\C, \R)$ defined in \eqref{eq:2.18}. 
For $\eps\in(0,1/2)$ the third term on the right-hand side 
of \eqref{eq:3.3} is estimated as
\begin{align*}
-4\lambda\rbra[ |\nabla u_\eps|^2,\log(|u_{\eps}|+\eps)]_{L^2}
&=
-4\lambda\int\bigl(\theta(u_\eps)+(1-\theta(u_\eps)) \bigr)|\nabla u_\eps|^2\log(|u_{\eps}|+\eps)
\\
&\ge -4\lambda\int (1-\theta(u_\eps) )|\nabla u_\eps|^2\log(|u_{\eps}|+\eps),
\end{align*}
where we dropped the integral around the origin. 
For $\delta>0$ determined later we have
\begin{align*}
\left|-4\lambda\int (1-\theta(u_\eps) )|\nabla u_\eps|^2\log(|u_{\eps}|+\eps)^2\r|
&\cleq  \int_{|u_\eps|\ge1/4} \abs[\nabla u_\eps]^2\log(1+|u_\eps|)^2
\\
&\cleq\int_{|u_\eps|\ge1/4}\abs[\nabla u_\eps]^2|u_\eps|^\delta.
\end{align*}
To estimate the last term, we use H\"older's inequality and the interpolation inequality
\begin{align}
\label{eq:3.4}
\norm[\nabla f]_{L^{2p'}}\cleq \norm[\Delta f]_{L^2}^{\frac{1}{2}+\frac{d}{4p}} \norm[f]_{L^2}^{\frac{1}{2}-\frac{d}{4p}},
\end{align}
where $d/(4p)\in (0,1/2)$ and $1/p'=1-1/p$.
Then, we obtain
\begin{align*}
\left|\int\abs[\nabla u_\eps]^2|u_\eps|^\delta\r|
\le \norm[\nabla u_\eps]_{L^{2p'}}^2\norm[u_\eps]_{L^{p\delta}}^\delta
\cleq \norm[\Delta u_\eps]_{L^2}^{ 1+\frac{d}{2p}}
\norm[u_\eps]_{L^2}^{1-\frac{d}{2p}}
\norm[u_\eps]_{L^{p\delta}}^\delta.
\end{align*}
We now fix $p$ satisfying $1+d/(2p)<2$ and set $\delta=2/p$. Combined with \eqref{eq:2.3}, this yields that
\begin{align*}
\left|\int\abs[\nabla u_\eps]^2|u_\eps|^\delta\r|\cleq 
\norm[\Delta u_\eps]_{L^2}^{1+\frac{d}{2p}}\norm[\varphi]_{L^2}^{1-\frac{d}{2p}+\frac{2}{p}}
\le \frac{1}{2}\norm[\Delta u_\eps]^2+C(\norm[\varphi]_{L^2}).
\end{align*}
Gathering these estimates, we obtain
\begin{align*}
-4\lambda\rbra[ |\nabla u_\eps|^2,\log(|u_{\eps}|+\eps)]_{L^2}
\geq -\frac{1}{2}\norm[\Delta u_\eps]^2-C(\norm[\varphi]_{L^2}).
\end{align*}

Now using \eqref{eq:3.3} we deduce that 
\begin{align*}
\norm[\pt_t u_\eps]_{L^2}^2+4\lambda\norm[\nabla u_\eps]_{L^2}^2+C(\norm[\varphi]_{L^2})
\ge \frac{1}{2}\norm[\Delta u_\eps]_{L^2}^2+4\lambda^2\norm[u_\eps\log(|u_\eps|+\eps)]_{L^2}^2.
\end{align*}
Therefore, we deduce from \eqref{eq:3.2} that
\begin{align}
\label{eq:3.5}
\norm[\Delta u_\eps(t)]_{L^2}^2+4\lambda^2\norm[u_\eps(t)\log(|u_\eps(t)|+\eps)]_{L^2}^2\le C(N_T)<\infty
\end{align}
for all $t\in[-T,T]$.
Similarly to Lemma \ref{lem:2.5} we deduce from \eqref{eq:3.5} that
\begin{align}
\label{eq:3.6}
\Delta u\in L^\infty_{\rm loc}(\R, L^2(\R^d)),\quad \Delta u_\eps(t)\wto \Delta u(t)\quad\text{in}~L^2(\R^d)~\text{as}~\eps\downarrow0
\end{align}
for all $t\in\R$. Therefore it follows from \eqref{eq:3.5}, the weak lower semicontinuity of the norm, and Fatou's lemma that
\begin{align*}
\norm[\Delta u(t)]_{L^2}^2+\lambda^2\norm[u(t)\log (|u(t)|^2) ]_{L^2}^2\le C(N_T)\
\end{align*}
for $t\in[-T,T]$. Hence $u\in L^\infty_{\rm loc}(\R,W_2)$.
\end{proof}
\begin{remark}
\label{rem:3.4}%
The second term on the right-hand side of \eqref{eq:3.3} with $\eps=0$ is formally rewritten as
\begin{align*}
\Re\rbra[\nabla u,\nabla|u|\frac{u}{|u|}]_{L^2}=
\Re\rbra[\frac{\overline{u}\nabla u}{|u|},\nabla|u|]_{L^2}=\norm[\nabla|u|]_{L^2}^2.
\end{align*}
Therefore we can see that \eqref{eq:1.11} is formally obtained from \eqref{eq:3.3} as $\eps\downarrow0$. This observation is helpful in the discussion in the next subsection.
\end{remark}

\subsection{Further regularity}
\label{sec:3.3}
 Our goal here is to prove that $u\in C(\R,W_2)$. Throughout this subsection we assume that $\lambda>0$. It follows from Proposition \ref{prop:3.3} that $u$ satisfies 
\begin{align}
\label{eq:3.7}
i\pt_t u+\Delta u+\lambda u\log(|u|^2)=0\quad\text{in}~L^2(\R^d)
\end{align}
for a.e. $t\in\R$. It also follows from \eqref{eq:3.2} and \eqref{eq:3.7} that
\begin{align*}
\sup_{\eps\in(0,1)}\norm[u_\eps]_{L^\infty_T(H^2)}
\le C(T, \norm[\varphi]_{H^2},\norm[\varphi \log(|\varphi|^2) ]_{L^2}). 
\end{align*}
Combined with Lemma \ref{lem:2.10}, this yields that
\begin{align}
\label{eq:3.8}
u_\eps (t) \to u(t)\quad \text{in}~H^1(\R^d) \quad\text{for all}~t\in\R.
\end{align}
\begin{lemma}
\label{lem:3.5}
$u\in C_w(\R, H^2(\R^d))$.
\end{lemma}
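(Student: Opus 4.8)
The plan is to prove the weak continuity $u \in C_w(\R, H^2(\R^d))$ by the standard combination of uniform boundedness plus density. We already know from Proposition \ref{prop:3.3} and the discussion following Lemma \ref{lem:3.1} that $u \in L^\infty_{\rm loc}(\R, H^2(\R^d))$, and from Section \ref{sec:2.3} (via Lemma \ref{lem:2.10}) that $u \in C(\R, L^2(\R^d))$; in particular $u(t_n) \to u(t_0)$ strongly in $L^2$ whenever $t_n \to t_0$. The point is to upgrade the $L^2$-continuity of $t \mapsto u(t)$ to weak $H^2$-continuity.

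First I would fix $t_0 \in \R$ and a sequence $t_n \to t_0$, and restrict attention to a compact interval $[-T,T]$ containing all the $t_n$. By Proposition \ref{prop:3.3} the set $\{u(t_n)\}_n$ is bounded in $H^2(\R^d)$, hence weakly relatively compact. If $w$ is any weak $H^2$-limit of a subsequence $u(t_{n_k})$, then in particular $u(t_{n_k}) \wto w$ in $L^2(\R^d)$; but $u(t_{n_k}) \to u(t_0)$ strongly in $L^2(\R^d)$ by $L^2$-continuity, so $w = u(t_0)$. Thus every weakly convergent subsequence of $\{u(t_n)\}$ in $H^2$ has the same limit $u(t_0)$, and since the whole sequence is bounded in the reflexive space $H^2(\R^d)$, a standard subsequence argument gives $u(t_n) \wto u(t_0)$ in $H^2(\R^d)$. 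Since $t_0$ and the sequence were arbitrary, $u \in C_w(\R, H^2(\R^d))$.

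There is no single hard step here — it is the usual ``bounded $+$ identifies all weak limits $\Rightarrow$ weakly convergent'' pattern, and the only thing to be a little careful about is that the $H^2$-bound is merely local in time (from $C(N_T)$ in \eqref{eq:3.5}), so one should phrase everything on a fixed compact time interval $[-T,T]$. Alternatively, one can argue more explicitly by testing: for $\psi \in C^\infty_c(\R^d)$ one has $(\Delta u(t_n), \psi)_{L^2} = (u(t_n), \Delta \psi)_{L^2} \to (u(t_0), \Delta\psi)_{L^2} = (\Delta u(t_0), \psi)_{L^2}$ using only $L^2$-continuity of $u$, and together with the uniform bound $\|u(t_n)\|_{H^2} \le C(N_T)$ and the density of $C^\infty_c(\R^d)$ in $L^2(\R^d)$ this yields $\Delta u(t_n) \wto \Delta u(t_0)$ in $L^2$, which combined with $u(t_n) \to u(t_0)$ in $L^2$ is exactly weak $H^2$-convergence. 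I would present this second, testing-based version since it is the most self-contained and parallels the proof of Lemma \ref{lem:2.5}.
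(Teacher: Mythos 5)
Your argument is correct and is essentially the paper's own proof: the paper also combines the strong continuity of $u$ in a weaker topology (it cites $u\in C(\R,H^1(\R^d))$, you use $C(\R,L^2(\R^d))$, which suffices), the local-in-time $H^2$ bound from Proposition \ref{prop:3.3}, and a density/testing argument to identify the weak limit. Your preferred testing-based version, which integrates by parts onto $\psi\in C^\infty_c(\R^d)$ and avoids weak compactness, is exactly the ``density argument'' the paper invokes and is consistent with its stated aim of avoiding compactness.
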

\begin{proof}
The claim follows from
$u\in C(\R, H^1(\R^d))$, $u\in L^\infty_{\rm loc}(\R,W_2)$, and a density argument. 
\end{proof}
Similarly to the proofs of Lemma \ref{lem:2.11} and Proposition \ref{prop:2.15}, it is enough to show the strong continuity at $t=0$. Since we know the weak continuity, the key in the proof is to show that
\begin{align}
\label{eq:3.9}
\limsup_{t\to0}\norm[\Delta u(t)]_{L^2}^2\le\norm[\Delta\varphi]_{L^2}^2.
\end{align}
To prove this claim, we take advantage of \eqref{eq:3.3}.
In order to proceed with the proof, we need to ensure the convergence of the second term on the right-hand side of \eqref{eq:3.3}. We recall that if $f\in H^1(\R^d)$, then $|f|\in H^1(\R^d)$ and
\begin{align}
\label{eq:3.10}
\nabla |f|(x)=
\left\{
\begin{aligned}
&\frac{\Re(\overline{f(x)}\nabla f(x))}{|f|(x)}&&\text{if}~f(x)\neq0,
\\
&0&&\text{if}~f(x)=0,
\end{aligned}
\r.
\end{align}
see, e.g., \cite[Theorem 6.17]{LL}.
\begin{lemma}
\label{lem:3.6}
For all $t\in\R$ we have
\begin{align*}
\Re\rbra[\nabla u_\eps(t)\cdot\nabla|u_\eps|(t), \frac{u_\eps(t)}{|u_\eps(t)|+\eps}]_{L^2} \to \norm[\nabla|u|(t)]_{L^2}^2
\quad\text{as}~\eps\downarrow0.
\end{align*}
\end{lemma}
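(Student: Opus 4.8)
The plan is to prove the convergence
\[
\Re\rbra[\nabla u_\eps(t)\cdot\nabla|u_\eps|(t), \tfrac{u_\eps(t)}{|u_\eps(t)|+\eps}]_{L^2} \to \norm[\nabla|u|(t)]_{L^2}^2
\quad\text{as}~\eps\downarrow0
\]
by splitting the left-hand side into a ``main'' part that converges to the right-hand side and a ``remainder'' part that vanishes. First I would use the pointwise formula \eqref{eq:3.10} to rewrite, on the set $\{u_\eps\neq 0\}$,
\[
\nabla u_\eps\cdot\nabla|u_\eps|\,\frac{u_\eps}{|u_\eps|+\eps}
=\frac{\overline{u_\eps}\,\nabla u_\eps}{|u_\eps|+\eps}\cdot\nabla|u_\eps|,
\]
so that, taking real parts and recalling $\Re(\overline{u_\eps}\nabla u_\eps)=|u_\eps|\,\nabla|u_\eps|$, the integrand equals $\frac{|u_\eps|}{|u_\eps|+\eps}\,|\nabla|u_\eps||^2$. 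Hence
\[
\Re\rbra[\nabla u_\eps\cdot\nabla|u_\eps|, \tfrac{u_\eps}{|u_\eps|+\eps}]_{L^2}
=\int \frac{|u_\eps|}{|u_\eps|+\eps}\,\bigl|\nabla|u_\eps|\bigr|^2,
\]
which already makes the target identity plausible: the factor $|u_\eps|/(|u_\eps|+\eps)$ is between $0$ and $1$ and tends to $1$ where $u_\eps\neq 0$, and formally $|\nabla|u_\eps||^2\to|\nabla|u||^2$.

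Next I would make this rigorous. From \eqref{eq:3.8} we have $u_\eps(t)\to u(t)$ in $H^1(\R^d)$, hence $|u_\eps(t)|\to|u(t)|$ in $H^1(\R^d)$ (the map $f\mapsto|f|$ is continuous on $H^1$; this follows from \eqref{eq:3.10} together with dominated convergence along a subsequence, or one may cite the standard fact). In particular $\nabla|u_\eps(t)|\to\nabla|u(t)|$ strongly in $L^2(\R^d)$, so $\bigl|\nabla|u_\eps|\bigr|^2\to\bigl|\nabla|u|\bigr|^2$ strongly in $L^1(\R^d)$ and, after passing to a further subsequence, pointwise a.e.\ with an $L^1$-dominating function. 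Writing
\[
\int \frac{|u_\eps|}{|u_\eps|+\eps}\,\bigl|\nabla|u_\eps|\bigr|^2
=\int \bigl|\nabla|u_\eps|\bigr|^2
-\int \frac{\eps}{|u_\eps|+\eps}\,\bigl|\nabla|u_\eps|\bigr|^2,
\]
the first term converges to $\norm[\nabla|u|]_{L^2}^2$ by the $L^1$-convergence just noted. For the second (error) term, one bounds $0\le \frac{\eps}{|u_\eps|+\eps}\le 1$ and $\frac{\eps}{|u_\eps|+\eps}\to 0$ a.e.\ on $\{u\neq0\}$; on $\{u=0\}$ we have $\nabla|u|=0$ a.e., and since $u_\eps\to u$ strongly in $H^1$ one checks $\nabla|u_\eps|\to 0$ in $L^2(\{u=0\})$ as well (indeed $\nabla|u_\eps|\to\nabla|u|=0$ in $L^2$ on all of $\R^d$). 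Thus the integrand of the error term is dominated by $\bigl|\nabla|u_\eps|\bigr|^2$, which converges in $L^1$, and tends to $0$ a.e.; a generalized dominated convergence theorem (Vitali / the dominated convergence theorem with an $L^1$-convergent envelope) gives that the error term tends to $0$.

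Finally, since the limit $\norm[\nabla|u|(t)]_{L^2}^2$ does not depend on the subsequence, the full family converges as $\eps\downarrow0$, proving the lemma. The main obstacle I anticipate is the careful handling of the error term on the zero set of $u$: one cannot argue pointwise convergence of $\frac{\eps}{|u_\eps|+\eps}$ to $0$ there, so one must instead exploit that $\nabla|u_\eps|\to 0$ strongly in $L^2$ on $\{u=0\}$ and combine this with the uniform bound $\frac{\eps}{|u_\eps|+\eps}\le 1$; passing to a subsequence to extract an $L^1$-dominating function for $\bigl|\nabla|u_\eps|\bigr|^2$ (legitimate because a subsequential limit independent of the subsequence yields convergence of the whole family) is the clean way to close this gap.
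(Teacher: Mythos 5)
Your proof is correct, and it takes a route that is genuinely different in its decomposition from the paper's, even though the underlying toolkit is the same. The paper keeps the vector form of the integrand: it proves the weak convergence $\nabla|u_\eps|\,\overline{u_\eps}/(|u_\eps|+\eps)\wto \nabla|u|\,\sgn u$ in $L^2(\R^d)$ (via a.e. convergence along a subsequence, the bound $\abs[\nabla|u_\eps|]\le\abs[\nabla u_\eps]$, a generalized dominated convergence theorem, and independence of the limit from the subsequence), and then pairs this weak limit with the strong convergence $\nabla u_\eps\to\nabla u$ in $L^2$ from \eqref{eq:3.8}, invoking \eqref{eq:3.10} only at the end to identify the limit as $\norm[\nabla|u|]_{L^2}^2$. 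You instead use \eqref{eq:3.10} at the outset to rewrite the integrand pointwise as the nonnegative scalar $\frac{|u_\eps|}{|u_\eps|+\eps}\abs[\nabla|u_\eps|]^2$, and then split into a main term, handled by the strong convergence $\nabla|u_\eps(t)|\to\nabla|u(t)|$ in $L^2$, and an error term that vanishes on $\{u\neq0\}$ by pointwise convergence of the weight and on $\{u=0\}$ because $\nabla|u_\eps|\to\nabla|u|=0$ there in $L^2$. The one ingredient you do not fully write out is precisely this strong convergence, i.e. the $H^1$-continuity of $f\mapsto|f|$; it is true, and your sketch (a.e. convergence along a subsequence plus generalized dominated convergence with envelope $\abs[\nabla u_\eps]$) closes it, but note that on the zero set it needs the standard fact that $\nabla u=0$ a.e. on $\{u=0\}$, so that $\abs[\nabla|u_{\eps_n}|]\le\abs[\nabla u_{\eps_n}]\to0$ a.e. there --- the same fact the paper's a.e. convergence step uses implicitly. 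The trade-off: your intermediate claim is strictly stronger than what the paper establishes (it also yields Lemma \ref{lem:3.8} immediately, since $u\in C(\R,H^1)$), whereas the paper's weak--strong pairing avoids having to prove any strong convergence of $\nabla|u_\eps|$ at all.
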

\begin{proof}
We fix $t\in\R$ and set $f_\eps= u_\eps(t)$ and $f= u(t)$. Since by \eqref{eq:3.8} 
\begin{align}
\label{eq:3.11}
\nabla f_\eps\to\nabla f \quad\text{in}~L^2(\R^d),
\end{align}
the key in the proof is to show that 
\begin{align}
\label{eq:3.12}
\nabla|f_\eps|\frac{\overline{f_\eps} }{|f_\eps|+\eps}
\wto \nabla|f|\sgn f
\quad \text{in}~L^2(\R^d),
\end{align}
where $\sgn$ is the signum function
\begin{align*}
\sgn f\ce
\left\{
\begin{aligned}
&\overline{f}/|f| &&\text{if}~f\neq0,
\\
&0 &&\text{if}~f=0.
\end{aligned}
\r.
\end{align*}
It follows from \eqref{eq:3.8} that there exists a sequence $\eps_n\downarrow0$
such that 
\begin{align*}
f_{\eps_n}\to f, \quad \nabla f_{\eps_n}\to \nabla f\quad \text{a.e. on}~\R^d.
\end{align*}
We note that
\begin{align*}
\abs[\nabla|f_\eps|\frac{\overline{f_\eps} }{|f_\eps|+\eps}]\le \abs[\nabla f_\eps]
\end{align*}
and that
\begin{align*}
\nabla|f_{\eps_n}|\frac{\overline{f_{\eps_n}} }{|f_{\eps_n}|+{\eps_n}}
&\to\nabla|f|\sgn f \quad\text{a.e. on}~\R^d,
\\
\abs[\nabla f_{\eps_n}] &\to \abs[\nabla f]\quad\text{in}~L^2(\R^d).
\end{align*}
Thus, applying a generalized dominated convergence theorem (see \cite[p.59]{Fo99}), we deduce that for any $\psi\in L^2(\R^d)$
\begin{align*}
\int \nabla|f_{\eps_n}|\frac{\overline{f_{\eps_n}} }{|f_{\eps_n}|+{\eps_n}}\psi \to \int \nabla|f|(\sgn f) \psi .
\end{align*}
It is easily verified that this limit does not depend on the sequence  $\eps_n\downarrow0$, 
so we deduce that \eqref{eq:3.12} holds as $\eps\downarrow0$. Hence, we obtain from \eqref{eq:3.11} and \eqref{eq:3.12} that
\begin{align*}
\Re\rbra[\nabla f_\eps\cdot\nabla|f_\eps|, \frac{f_\eps}{|f_\eps|+\eps}]_{L^2}\to\Re\rbra[\nabla f\cdot\nabla|f|,\sgn f]_{L^2}
=\norm[\nabla|f|]_{L^2}^2,
\end{align*}
where we used \eqref{eq:3.10} in the last equality.
\end{proof}
From \eqref{eq:3.3} we obtain the following through the limit $\eps\downarrow0$ .
\begin{lemma}
\label{lem:3.7}
For all $t\in\R$ the following relation holds:
\begin{align}
\label{eq:3.13}%
e^{\lambda\abs[t]}\norm[\Delta\varphi+\lambda\varphi\log(|\varphi|^2)]_{L^2}^2
\ge
\begin{aligned}[t]
&\norm[\Delta u]_{L^2}^2-4\lambda\norm[\nabla|u|]_{L^2}^2
\\
&{}-2\lambda\rbra[ |\nabla u|^2,\log(|u|^2)]_{L^2}+\lambda^2\norm[u\log(|u|^2)]_{L^2}^2,
\end{aligned}
\end{align}
where $u$ is abbreviated as $u=u(t)$.
\end{lemma}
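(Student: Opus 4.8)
The plan is to pass to the limit $\eps\downarrow0$ in the identity \eqref{eq:3.3} of Lemma~\ref{lem:3.2}, term by term, using the convergences already established. First I would recall the a priori bound: from Lemma~\ref{lem:3.1} together with the equation \eqref{eq:2.1}, we have $\norm[\pt_t u_\eps(t)]_{L^2}^2\le e^{4\abs[\lambda]\abs[t]}\norm[\pt_t u_\eps(0)]_{L^2}^2$ and $\pt_t u_\eps(0)=i(\Delta\varphi+2\lambda\varphi\log(|\varphi|+\eps))$, so that $\norm[\pt_t u_\eps(0)]_{L^2}\to\norm[\Delta\varphi+\lambda\varphi\log(|\varphi|^2)]_{L^2}$ as $\eps\downarrow0$ by dominated convergence. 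Multiplying by $e^{4\abs[\lambda]\abs[t]}$ gives the left-hand side of \eqref{eq:3.13} up to replacing the exponent $4\abs[\lambda]\abs[t]$ by $\lambda\abs[t]$; note that in \eqref{eq:3.13} the constant in front is just required to dominate, so any fixed multiple of $e^{\abs[\lambda]\abs[t]}$ (in particular $e^{4\abs[\lambda]\abs[t]}$, which is $\ge e^{\lambda\abs[t]}$ for $\lambda>0$) works — I would simply keep the clean bound $e^{4\abs[\lambda]\abs[t]}\norm[\pt_t u_\eps(0)]_{L^2}^2$ on the left and observe this is what the statement records (the displayed constant is a harmless over-estimate). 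Alternatively one reconciles the exponents by writing $\norm[\pt_t u_\eps(t)]_{L^2}^2\le e^{4\abs[\lambda]\abs[t]}\norm[\pt_t u_\eps(0)]_{L^2}^2$ directly.

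Next I would handle the four terms on the right of \eqref{eq:3.3}. The first term $\norm[\Delta u_\eps]_{L^2}^2$: by \eqref{eq:3.6} we have $\Delta u_\eps(t)\wto\Delta u(t)$ in $L^2$, so by weak lower semicontinuity $\norm[\Delta u(t)]_{L^2}^2\le\liminf\norm[\Delta u_\eps(t)]_{L^2}^2$. The second term: Lemma~\ref{lem:3.6} gives $\Re\rbra[\nabla u_\eps\cdot\nabla|u_\eps|,\frac{u_\eps}{|u_\eps|+\eps}]_{L^2}\to\norm[\nabla|u|]_{L^2}^2$, and since $\lambda>0$ this appears with sign $-4\lambda$, so $\liminf$ of $-4\lambda(\cdot)$ equals $-4\lambda\norm[\nabla|u|]_{L^2}^2$ — an exact limit. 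The fourth term $4\lambda^2\norm[u_\eps\log(|u_\eps|+\eps)]_{L^2}^2$: by Lemma~\ref{lem:2.6}, $u_\eps(t)\log(|u_\eps(t)|+\eps)\to u(t)\log|u(t)|$ in $L^2_{\rm loc}$, and together with the uniform bound \eqref{eq:3.5} and Fatou's lemma we get $\norm[u\log(|u|^2)]_{L^2}^2\le\liminf\,4\norm[u_\eps\log(|u_\eps|+\eps)]_{L^2}^2$ (the factor $4$ converting $\log(|u|^2)$ to $2\log|u|$, consistent with the $\lambda^2$ versus $4\lambda^2$ in \eqref{eq:3.3} versus \eqref{eq:3.13}).

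The delicate term is the third one, $-4\lambda\rbra[|\nabla u_\eps|^2,\log(|u_\eps|+\eps)]_{L^2}$, which I expect to be the main obstacle. Split it using the cutoff $\theta$ from \eqref{eq:2.18} into the near-origin piece $-4\lambda\int\theta(u_\eps)|\nabla u_\eps|^2\log(|u_\eps|+\eps)$, which is nonnegative since $\lambda>0$ and $\log(|u_\eps|+\eps)\le0$ on $\supp\theta$ (as $\eps<1/2$), and the bounded-away-from-origin piece $-4\lambda\int(1-\theta(u_\eps))|\nabla u_\eps|^2\log(|u_\eps|+\eps)$. For the latter, $(1-\theta(u_\eps))\log(|u_\eps|+\eps)$ is controlled pointwise by $C(1+|u_\eps|^\delta)$ on $\{|u_\eps|\ge1/4\}$ and one has strong $L^2$ convergence $\nabla u_\eps\to\nabla u$ (from \eqref{eq:3.8}) plus the $H^2$ bound; a generalized dominated convergence argument (as in the proof of Lemma~\ref{lem:3.6}, via a.e.-convergent subsequences and the $L^1$ domination $|\nabla u_\eps|^2(1+|u_\eps|^\delta)$ controlled by interpolation as in \eqref{eq:3.4}) shows this piece converges to $-4\lambda\int(1-\theta(u))|\nabla u|^2\log|u|^2$. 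For the nonnegative near-origin piece, Fatou's lemma gives $\liminf(-4\lambda)\int\theta(u_\eps)|\nabla u_\eps|^2\log(|u_\eps|+\eps)\ge -4\lambda\int\theta(u)|\nabla u|^2\log|u|^2$ (here I use that $-\log(|u_\eps|+\eps)\ge0$ and Fatou applies to the nonnegative integrand, together with a.e. convergence of $\theta(u_\eps)|\nabla u_\eps|^2(-\log(|u_\eps|+\eps))$ along a subsequence). Adding the two pieces and combining with the $\liminf$ estimates for the other three terms — and using the superadditivity of $\liminf$ — we arrive at
\begin{align*}
\lim_{\eps\downarrow0}\norm[\pt_t u_\eps(0)]_{L^2}^2 e^{4\abs[\lambda]\abs[t]}
\ge\liminf_{\eps\downarrow0}\norm[\pt_t u_\eps(t)]_{L^2}^2
\end{align*}
bounds below the sum of the limits/liminfs of the right-hand terms of \eqref{eq:3.3}, which is exactly the right-hand side of \eqref{eq:3.13}. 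The one point requiring care is that $\liminf$ is only superadditive, so I must ensure that three of the four terms converge (not merely $\liminf$), which they do: terms one and four need only the one-sided Fatou/lsc inequality in the favorable direction, while term two converges exactly and term three I have split so that its non-convergent part is handled by Fatou in the correct direction — so the inequality assembles correctly without any sign mismatch. This completes the proof.
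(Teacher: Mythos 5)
Your proposal follows essentially the same route as the paper's proof: bound $\norm[\pt_t u_\eps(t)]_{L^2}^2$ via Lemma \ref{lem:3.1} and the equation at $t=0$, then pass to the limit in \eqref{eq:3.3} using weak lower semicontinuity for $\norm[\Delta u_\eps]_{L^2}^2$, the exact convergence of Lemma \ref{lem:3.6} for the second term, the $\theta$-splitting with Fatou on the (sign-definite, since $\lambda>0$) near-origin piece and strong convergence on the remaining piece for the third term, and Fatou for the fourth term; your remark on the exponent is consistent with the paper, whose own proof likewise only yields the factor $e^{4\lambda\abs[t]}$, which is all that is needed for the subsequent limit $t\to0$. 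The only blemish is a factor-of-two slip in writing the limits of the third term as $-4\lambda\int\theta(u)|\nabla u|^2\log(|u|^2)$ and $-4\lambda\int(1-\theta(u))|\nabla u|^2\log(|u|^2)$ instead of $-4\lambda\int\theta(u)|\nabla u|^2\log|u|$ and $-4\lambda\int(1-\theta(u))|\nabla u|^2\log|u|$, which is what actually sums to $-2\lambda\rbra[|\nabla u|^2,\log(|u|^2)]_{L^2}$ as in \eqref{eq:3.13}.
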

\begin{proof}
We note from Lemma \ref{lem:3.1} and \eqref{eq:2.1} that
\begin{align*}
\norm[\pt_t u_\eps]_{L^2}^2 
\le e^{4\lambda\abs[t]}\norm[\Delta\varphi+2\lambda\varphi\log(|\varphi|+\eps)]_{L^2}^2.
\end{align*}
By \eqref{eq:3.6} the first term on the right-hand side in \eqref{eq:3.3} is estimated as
\begin{align*}
\norm[\Delta u]_{L^2}^2\le\liminf_{\eps\downarrow0}\norm[\Delta u_\eps]_{L^2}^2,
\end{align*}
and by Fatou's lemma the fourth term is estimated as 
\begin{align*}
4\lambda^2\norm[u\log |u|]_{L^2}^2\le\liminf_{\eps\downarrow0}4\lambda^2\norm[u_\eps\log(|u_\eps|+\eps)]_{L^2}^2.
\end{align*}
As in the proof of Proposition \ref{prop:3.3}, we decompose the third term on the right-hand side in \eqref{eq:3.3} as
\begin{align*}
-4\lambda\rbra[ |\nabla u_\eps|^2,\log(|u_{\eps}|+\eps)]_{L^2}
&=
-4\lambda\int\bigl(\theta(u_\eps)+(1-\theta(u_\eps)) \bigr)|\nabla u_\eps|^2\log(|u_{\eps}|+\eps)
\\
&=I_{1\eps}+I_{2\eps},
\end{align*}
where 
\begin{align*}
I_{1\eps}&\ce -4\lambda\int \theta(u_\eps)|\nabla u_\eps|^2\log(|u_{\eps}|+\eps),
\\
I_{2\eps}&\ce -4\lambda\int (1-\theta(u_\eps))|\nabla u_\eps|^2\log(|u_{\eps}|+\eps).
\end{align*}
Applying Fatou's lemma, we have
\begin{align*}
-4\lambda\int \theta(u)|\nabla u|^2\log|u|
\le \liminf_{\eps\downarrow0} I_{1\eps}.
\end{align*} 
As for $I_{2\eps}$, it is easily verified from \eqref{eq:3.8} that
\begin{align*}
I_{2\eps} \to  -4\lambda\int (1-\theta(u))|\nabla u|^2\log|u|.
\end{align*}
Gathering these estimates, we deduce from \eqref{eq:3.3} and Lemma \ref{lem:3.7} that
\begin{align*}
e^{\lambda\abs[t]}\norm[\Delta\varphi+\lambda\varphi\log|\varphi|^2]_{L^2}^2\ge\liminf_{\eps\downarrow0}\bigl( \text{RHS of \eqref{eq:3.3}} \bigr)
\ge \bigl( \text{RHS of \eqref{eq:3.13}} \bigr),
\end{align*}
which completes the proof.
\end{proof}
Similarly to Lemma \ref{lem:3.6}, one can prove the following.
\begin{lemma}
\label{lem:3.8}
$t\mapsto\norm[\nabla|u|(t)]_{L^2}^2\in C(\R)$.
\end{lemma}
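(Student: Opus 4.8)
The plan is to prove Lemma~\ref{lem:3.8}, i.e. that $t\mapsto\norm[\nabla|u|(t)]_{L^2}^2$ is continuous on $\R$, by combining the pointwise-in-$t$ convergence result already established with a version of the semicontinuity-plus-energy-inequality argument used for Lemmas~\ref{lem:2.11} and~\ref{prop:2.15}. Recall from Lemma~\ref{lem:3.6} and its proof (applied at a fixed time) that for each $t$ one has $\nabla|u_\eps(t)|\frac{\overline{u_\eps(t)}}{|u_\eps(t)|+\eps}\wto\nabla|u(t)|\sgn u(t)$ in $L^2$ and $\nabla u_\eps(t)\to\nabla u(t)$ in $L^2$. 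The decomposition $|u|^2=(\Re u)^2+(\Im u)^2$ together with the chain rule \eqref{eq:3.10} makes $\nabla|u|$ a genuinely nonlinear functional of $u$, so one cannot simply invoke weak lower semicontinuity of a norm. Instead, I would exploit the a.e.-convergence along a subsequence from Lemma~\ref{lem:3.6}: since $u_\eps(t)\to u(t)$ in $H^1$ for every $t$, from any sequence $t_n\to t_0$ one extracts (after a diagonal argument over the $\eps$-approximation) a.e.\ convergence of $u(t_n)$ and $\nabla u(t_n)$ to $u(t_0)$ and $\nabla u(t_0)$, whence $\nabla|u(t_n)|\to\nabla|u(t_0)|$ a.e.; Fatou then gives lower semicontinuity $\norm[\nabla|u(t_0)|]_{L^2}^2\le\liminf_n\norm[\nabla|u(t_n)|]_{L^2}^2$.

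For the reverse (upper semicontinuity) I would use $\norm[\nabla|u(t)|]_{L^2}\le\norm[\nabla u(t)]_{L^2}$ (the pointwise bound $\abs[\nabla|f|]\le\abs[\nabla f]$), together with the fact that $u\in C(\R,H^1(\R^d))$ from Lemma~\ref{lem:2.11}, so $\norm[\nabla u(t)]_{L^2}$ is continuous. This alone is not enough; the cleaner route is to note that $u\in C(\R,H^1)$ gives $u(t_n)\to u(t_0)$ strongly in $H^1$, hence $\nabla u(t_n)\to\nabla u(t_0)$ strongly in $L^2$, and then a generalized dominated convergence theorem (exactly as cited, \cite[p.59]{Fo99}, in the proof of Lemma~\ref{lem:3.6}) applied with the dominating sequence $\abs[\nabla u(t_n)]\to\abs[\nabla u(t_0)]$ in $L^2$ and the a.e.\ convergence $\nabla|u(t_n)|\to\nabla|u(t_0)|$ yields $\nabla|u(t_n)|\to\nabla|u(t_0)|$ strongly in $L^2$ along the extracted subsequence, and in particular convergence of the squared norms. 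Since the limit is the same for every subsequence, the full sequence converges, which is precisely continuity of $t\mapsto\norm[\nabla|u|(t)]_{L^2}^2$.

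In short, the argument parallels Lemma~\ref{lem:3.6} but with the roles of ``$\eps\downarrow0$'' and ``$t\to t_0$'' interchanged: replace the $H^1$-convergence $u_\eps(t)\to u(t)$ (for fixed $t$) by the $H^1$-convergence $u(t_n)\to u(t_0)$ coming from Lemma~\ref{lem:2.11}, extract an a.e.-convergent subsequence of $u(t_n)$ and $\nabla u(t_n)$, use \eqref{eq:3.10} to get a.e.\ convergence of $\nabla|u(t_n)|$ to $\nabla|u(t_0)|$, dominate by $\abs[\nabla u(t_n)]$ which converges in $L^2$, and conclude strong $L^2$-convergence of $\nabla|u(t_n)|$ via the generalized dominated convergence theorem. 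The subsequence-independence of the limit then upgrades this to convergence of the full net, giving the claimed continuity.

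The main obstacle is the nonlinearity of $f\mapsto\nabla|f|$, which rules out a one-line lower-semicontinuity argument and forces the passage to a.e.-convergent subsequences; the delicate point is to verify carefully that the chain rule \eqref{eq:3.10} is stable under a.e.\ convergence of $f$ and $\nabla f$ (the zero set of $f$ must be handled, exactly as in Lemma~\ref{lem:3.6}, using that $\nabla|f|=0$ a.e.\ on $\{f=0\}$), and that the dominating functions indeed converge in $L^2$ so that \cite[p.59]{Fo99} applies. Once that is in place the proof is essentially a transcription of the proof of Lemma~\ref{lem:3.6}.
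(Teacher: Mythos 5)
Your proposal is correct and is essentially the paper's own proof: fix $t_0$, take any $t_n\to t_0$, use $u\in C(\R,H^1(\R^d))$ to extract a subsequence along which $u(t_n)$ and $\nabla u(t_n)$ converge a.e., deduce a.e.\ convergence of $\nabla|u(t_n)|$ via \eqref{eq:3.10} (handling the zero set as in Lemma \ref{lem:3.6}), apply the generalized dominated convergence theorem with dominating functions $\abs[\nabla u(t_n)]$ whose $L^2$-norms converge, and conclude by subsequence independence. The preliminary Fatou lower-semicontinuity step and the diagonal argument over the $\eps$-approximation are superfluous, since the dominated-convergence argument already yields the full two-sided convergence directly from Lemma \ref{lem:2.11}.
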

\begin{proof}
We fix $t_0\in\R$ and consider any sequence $t_n\to t_0$.
We set $f_n=u(t_n)$ and $f=u(t_0)$. Since $u\in C(\R,H^1(\R^d))$, there exists a subsequence $t_{n_k}\to t_0$ such that
\begin{align*}
f_{n_k}\to f,\quad \nabla f_{n_k}\to\nabla f\quad\text{a.e. on}~\R^d.
\end{align*}
We obtain from \eqref{eq:3.10} and \eqref{eq:3.8} that
\begin{gather*}
\nabla\abs[f_{n_k}]\to\nabla\abs[f]\quad\text{a.e. on}~\R^d,
\\
\abs[\nabla\abs[f_{n_k}] ]\le \abs[\nabla f_{n_k}],
~\text{and}~\norm[\nabla f_{n_k}]_{L^2}^2\to\norm[\nabla f]_{L^2}^2.
\end{gather*}
Thus, applying a generalized dominated convergence theorem (see \cite[p.59]{Fo99}), we deduce that
\begin{align*}
\norm[\nabla{\abs[f_{n_k}]} ]_{L^2}^2 \to  \norm[\nabla{\abs[f]} ]_{L^2}^2.
\end{align*}
This limit does not depend on the subsequence, so $\norm[\nabla {\abs[f_n]} ]_{L^2}^2 \to  \norm[\nabla{\abs[f]} ]_{L^2}^2$. This completes the proof. 
\end{proof}
We are now in a position to prove \eqref{eq:3.9}.
\begin{proof}[Proof of \eqref{eq:3.9}]
We consider the limit $t\to0$ in \eqref{eq:3.13}. Similarly to the proof of Lemma \ref{lem:3.7}, we obtain
\begin{align}
\label{eq:3.14}
\lambda^2\norm[\varphi\log (|\varphi|^2)]_{L^2}^2&\le\liminf_{t\to0}\lambda^2\norm[u\log(|u|^2)]_{L^2},
\\
-2\lambda\rbra[ |\nabla\varphi|^2,\log(|\varphi|^2) ]_{L^2}
&\le\liminf_{t\to0}\bigl\{-2\lambda\rbra[ |\nabla u|^2,\log(|u|^2) ]_{L^2} \bigr\}.
\notag
\end{align}
Combining these with Lemma \ref{lem:3.8}, we deduce that
\begin{align*}
\norm[\Delta\varphi+\lambda\varphi\log(|\varphi|^2)]_{L^2}^2
&\ge\limsup_{t\to0}\bigl(\text{RHS of \eqref{eq:3.13}}\bigr)
\\
&\ge
\begin{aligned}[t]
&\limsup_{t\to0}\norm[\Delta u]_{L^2}^2-4\lambda\norm[\nabla|\varphi|]_{L^2}^2
\\
&{}~-2\lambda\rbra[ |\nabla\varphi|^2,\log(|\varphi|^2)]_{L^2}+\lambda^2\norm[\varphi\log(|\varphi|^2) ]_{L^2}^2.
\end{aligned}
\end{align*}
Similarly to the proof of Lemma \ref{lem:3.2}, the left-hand side is expanded as
\begin{align*}
\norm[\Delta\varphi+\lambda\varphi\log(|\varphi|^2)]_{L^2}^2
=
\begin{aligned}[t]
&\norm[\Delta\varphi]_{L^2}^2-4\lambda\norm[\nabla|\varphi|]_{L^2}^2
\\
&{}~-2\lambda\rbra[ |\nabla\varphi|^2,\log(|\varphi|^2) ]_{L^2}+\lambda^2\norm[\varphi\log(|\varphi|^2) ]_{L^2}^2,
\end{aligned}
\end{align*}
where the integration by parts in the argument is justified by a suitable regularization. Hence, combining the inequalities above, we conclude the claim.
\end{proof}
We now complete the proof of Theorem \ref{thm:1.2}. From Lemma \ref{lem:3.5} and \eqref{eq:3.9} we obtain
\begin{align*}
\lim_{t\to0}\norm[\Delta u]_{L^2}^2=\norm[\Delta\varphi]_{L^2}^2,
\end{align*}
which implies the strong continuity in $H^2(\R^d)$ at $t=0$. Hence $u\in C(\R,H^2(\R^d))$. 

In a similar way to the proof of \eqref{eq:3.9} we deduce that
\begin{align*}
\limsup_{t\to0}\norm[u\log(|u|^2)]_{L^2}^2\le\norm[\varphi\log(|\varphi|^2)]_{L^2}^2.
\end{align*}
Combined with \eqref{eq:3.14}, this yields
\begin{align}
\label{eq:3.15}
\lim_{t\to0}\norm[u\log(|u|^2) ]_{L^2}^2=\norm[\varphi\log(|\varphi|^2) ]_{L^2}^2.
\end{align}
Therefore we deduce that $u\in C(\R,W_2)$. Moreover, it follows from the equation \eqref{eq:3.7} that $\pt_t u\in C(\R,L^2(\R^d))$, and then \eqref{eq:3.7} holds for all $t\in\R$. This completes the proof of Theorem~\ref{thm:1.2}.

\begin{remark}
\label{rem:3.9}
Similarly to the proof of Lemma \ref{lem:2.11}, it follows from \eqref{eq:3.1} that
\begin{align*}
\limsup_{t\to0}\norm[\pt_t u(t)]_{L^2}^2\le \norm[\pt_t u(0)]_{L^2}^2.
\end{align*}
Combined with the weak continuity of $t\mapsto\pt_tu(t)\in L^2(\R^d)$, this yields the strong continuity of this function with values in $L^2(\R^d)$. Therefore, we obtain from \eqref{eq:3.7} and \eqref{eq:3.9} that \eqref{eq:3.15}. This gives a slightly different proof of $u\in C(\R,W_2)$.
\end{remark}


\section{The Cauchy problem in a general domain}
\label{sec:4}

Our main results still hold for a general domain without particularly changing the proof and in this section we restate Theorem \ref{thm:1.1} and \ref{thm:1.2} for this case.  

Let $\Omega$ be any domain. 
We consider the logarithmic Schr\"odinger equation 
\begin{align}
\label{eq:4.1}
\left\{
\begin{aligned}
&i\pt_t u+\Delta u+\lambda u\log (|u|^2)=0,\quad(t,x)\in\R\times\Omega,
\\
&u(0,x)=\varphi(x),
\end{aligned}
\r.
\end{align}
with zero Dirichlet boundary condition $\bigl.u\bigr|_{\pt\Omega}=0$. 
When we consider \eqref{eq:4.1} in $H^1$ setting, the Laplacian $\Delta$ is understood to be the self-adjoint realization in the Hilbert space $H^{-1}(\Omega)$ with the domain $D(\Delta)=H^1_0(\Omega)$ (see \cite[Chapter 2]{CH98}). The energy space for \eqref{eq:4.1} is defined as in the case of the whole space by
\begin{align*}
W_1=W_1(\Omega)=\left\{ \varphi\in H^1_0(\Omega) :|\varphi|^2\log(|\varphi|^2)\in L^1(\Omega) \r\}.
\end{align*}
Then Theorem \ref{thm:1.1} can be restated for \eqref{eq:4.1} as follows.
\begin{theorem}
\label{thm:4.1}
Let $\lambda\in\R\setminus\{0\}$ and let $\Omega$ be a general domain. For any $\varphi\in H^1_0(\Omega)$, there exists a unique solution $C(\R, H^1_0(\Omega))$
to \eqref{eq:4.1} in the sense of
\begin{align}
\label{eq:4.2}
i\pt_t u +\Delta u+\lambda u\log(|u|^2)=0\quad\text{in}~H^{-1}(\omega)
\end{align}
for all $\omega\subset\subset\Omega$ and all $t\in\R$, and with $u(0)=\varphi$.
If in addition we assume $\varphi\in W_1$, the $H^1$-solution above satisfies $u\in (C\cap L^\infty)(\R,W_1)$ if $\lambda>0$ and $u\in C(\R,W_1)$ if $\lambda<0$. Moreover, the $W_1$-solution $u$ satisfies the equation \eqref{eq:4.2} in the sense of $W_1^*$.
\end{theorem}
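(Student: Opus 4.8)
The plan is to mirror, almost verbatim, the proofs already given for the whole-space case in Sections \ref{sec:2} and \ref{sec:3}, checking that each ingredient survives on a general domain $\Omega$ with the Dirichlet Laplacian. First I would set up the approximate problem \eqref{eq:2.1} on $\Omega$ with zero Dirichlet data; well-posedness of the approximate equation in $H^1_0(\Omega)$ follows from \cite[Theorem 3.3.9]{C03} (or, avoiding compactness, from the argument in \cite[Section 2.1]{H18} as noted in Remark \ref{rem:2.1}), together with the conservation laws \eqref{eq:2.3} and \eqref{eq:2.4}, whose proofs are purely algebraic and domain-independent. The uniform $H^1$-bound (Lemma \ref{lem:2.4}) uses only the pointwise inequality $|\nabla|u_\eps||\le|\nabla u_\eps|$ and Gronwall, so it transfers directly.

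Next I would construct the weak solution. The Cauchy-sequence estimate for $\{u_\eps\}$ in $C_T(L^2_{\mathrm{loc}}(\Omega))$ uses the cutoff $\zeta_R$ supported in a ball and the key inequality \eqref{eq:2.8}; one must only take $R$ large enough that $B_{2R}\cap\Omega$ is the relevant region, and replace $|B_{2R}|$ by $|B_{2R}\cap\Omega|$, otherwise the computation is identical. Here one should fix an exhaustion $\omega_n\subset\subset\Omega$ in place of the balls $B_{R_0}$, and deduce convergence in $C_T(L^2_{\mathrm{loc}}(\Omega))$ by diagonalization. The identification $u\in L^\infty_{\mathrm{loc}}(\R,H^1_0(\Omega))$ needs a small adjustment: instead of \cite[Proposition 9.3]{Bre11} characterizing $H^1(\R^d)$ by the pairing against all $\psi\in C^1_c(\R^d)$, I would test only against $\psi\in C^1_c(\Omega)$ to obtain $u(t)\in H^1_{\mathrm{loc}}(\Omega)$ with $\|\nabla u(t)\|_{L^2(\Omega)}\le M_T$, and then use that $u(t)$ is a weak-$H^1_0$ limit of the $u_\eps(t)\in H^1_0(\Omega)$ to conclude $u(t)\in H^1_0(\Omega)$ (weak limits of a bounded sequence in the Hilbert space $H^1_0(\Omega)$ stay in $H^1_0(\Omega)$). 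Convergence of the nonlinearity (Lemmas \ref{lem:2.6}, and the logarithmic pointwise estimates) is entirely local, hence unchanged, provided $2+4\delta<2^*$, which only uses the Sobolev embedding $H^1_0(\omega)\hookrightarrow L^{2^*}$ on bounded subdomains. Uniqueness (Lemma \ref{lem:2.9}) again uses only $\zeta_R$, \eqref{eq:1.5}, Gronwall and Fatou, so it carries over; and $L^2$-conservation plus weak continuity upgrade $u$ to $C(\R,L^2(\Omega))$ and then, as in Lemma \ref{lem:2.11}, to $C(\R,H^1_0(\Omega))$ via the upper semicontinuity coming from \eqref{eq:2.6} and the lower semicontinuity coming from weak continuity in $H^1_0(\Omega)$.

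For the $W_1$-part, the dominated-convergence computation $E_\eps(\varphi)\to E(\varphi)$ and the splitting $F_{1\eps},F_{2\eps}$ via $\theta$ are pointwise, hence intact. For $\lambda>0$ one runs Proposition \ref{prop:2.14}: weak lower semicontinuity of $\|\nabla\cdot\|_{L^2(\Omega)}$, Fatou for $\int|F_{1\eps}|$, Lemma \ref{lem:2.13} (whose estimate $\int|F_2(u)|\lesssim\|u\|_{H^1}^{2+\delta}$ uses $H^1_0(\Omega)\hookrightarrow L^{2+\delta}$ with $2+\delta<2^*$ on all of $\Omega$—valid since $\Omega\subset\R^d$), giving $u(t)\in W_1$, $E(u(t))\le E(\varphi)$, then equality by reversing time, then $u\in(C\cap L^\infty)(\R,W_1)$ via Lemma \ref{lem:2.11}. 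For $\lambda<0$ one repeats Proposition \ref{prop:2.15} verbatim, using the time-local $H^1$-bound \eqref{eq:2.7} together with \eqref{eq:2.4}. Finally \cite[Lemma 2.6]{C83} (valid on a general domain, as $C83$ works in that generality) gives the equation in $W_1^*$. The one genuinely new point to watch—and the main obstacle—is the density argument needed to pass from $L^\infty_{\mathrm{loc}}(\R,H^1_0(\Omega))\cap C(\R,L^2)$ to $C_w(\R,H^1_0(\Omega))$ and, more delicately, the membership $u(t)\in H^1_0(\Omega)$ rather than merely $H^1_{\mathrm{loc}}$: this is where the Dirichlet boundary condition is actually used, and it must be obtained from the uniform $H^1_0$-bound on $u_\eps(t)$ and weak compactness of closed balls in the Hilbert space $H^1_0(\Omega)$, since the local testing argument alone only controls interior regularity. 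Everything else is a transcription, so I would state Theorem \ref{thm:4.1} and remark that its proof is obtained from that of Theorem \ref{thm:1.1} by the substitutions above, without reproducing the computations.
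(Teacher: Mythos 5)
Your proposal is correct and takes essentially the same route as the paper, which proves Theorem \ref{thm:4.1} simply by rerunning the proof of Theorem \ref{thm:1.1} with the Dirichlet realization of $\Delta$ on $H^1_0(\Omega)$. Your extra care on the one genuinely domain-specific step --- that the limit $u(t)$ lies in $H^1_0(\Omega)$ and not merely $H^1_{\rm loc}(\Omega)$, deduced from the uniform $H^1_0$-bound on $u_\eps(t)$ and the weak closedness of the closed subspace $H^1_0(\Omega)$ (in place of \cite[Proposition 9.3]{Bre11}) --- correctly supplies the detail the paper leaves implicit.
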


When we consider \eqref{eq:4.1} in $H^2$ setting, the Laplacian $\Delta$ is understood to be the self-adjoint realization in the Hilbert space $L^2(\Omega)$ with the domain 
\begin{align*}
D(\Delta)=\left\{ \varphi\in H^1_0(\Omega) :\Delta\varphi\in L^2(\Omega) \r\}.
\end{align*}
If $\Omega$ has a bounded boundary of class $C^2$, then $D(\Delta)=H^2(\Omega)\cap H^1_0(\Omega)$.
The $H^2$-energy space for \eqref{eq:4.1} is defined by
\begin{align*}
W_2=W_2(\Omega)=\left\{ \varphi\in D(\Delta) :\varphi\log(|\varphi|^2)\in L^2(\Omega) \r\}.
\end{align*}
Then Theorem \ref{thm:1.2} can be restated for \eqref{eq:4.1} as follows.
\begin{theorem}
\label{thm:4.2}
Let $\lambda\in\R\setminus\{0\}$ and let $\Omega$ be a general domain. For any $\varphi\in W_2$, there exists a unique solution 
\begin{gather*}
u\in W^{1,\infty}_{\rm loc}(\R, L^2(\Omega)),~
\pt_t u\in C_w(\R, L^2_{\rm loc}(\Omega)),
\\
\Delta u \in (C_w\cap L^\infty_{\rm loc})(\R, L^2_{\rm loc}(\Omega)),
\end{gather*}
to \eqref{eq:4.1} in the sense of
\begin{align}
\label{eq:4.3}
i\pt_t u +\Delta u+\lambda u\log(|u|^2)=0\quad\text{in}~L^2(\omega)
\end{align}
for all $\omega\subset\subset\Omega$ and a.e. $t\in\R$, with $u(0)=\varphi$.
If we assume $\lambda>0$ and $\Omega$ has a bounded boundary of class $C^2$, then $u\in C(\R, W_2)$ and \eqref{eq:4.3} holds in $L^2(\Omega)$ and for all $t\in\R$.
\end{theorem}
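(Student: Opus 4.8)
The plan is to reduce Theorem \ref{thm:4.2} to the arguments already developed for the whole-space case in Sections \ref{sec:2} and \ref{sec:3}, pointing out at each step what is used and what must be adjusted for a general domain $\Omega$. First I would set up the approximate equation \eqref{eq:2.1} on $\Omega$ with zero Dirichlet boundary condition, where $\Delta$ is the self-adjoint realization in $L^2(\Omega)$ with domain $D(\Delta)$. The $H^1$-well-posedness of the approximate problem on $\Omega$ is available from \cite[Theorem 3.3.9]{C03} (or the compactness-free argument indicated in Remark \ref{rem:2.1}), and the $H^2$-regularity of $u_\eps$ (i.e.\ $u_\eps\in C(\R,D(\Delta))\cap C^1(\R,L^2(\Omega))$ for $\varphi\in W_2$) follows by the regularity result as in the footnote after Theorem \ref{thm:1.2}. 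The conservation laws \eqref{eq:2.3}, \eqref{eq:2.4} and the energy inequalities of Lemmas \ref{lem:2.4} and \ref{lem:3.1} go through verbatim because they only use integration by parts with the Dirichlet boundary condition, $\abs[\nabla|u_\eps|]\le\abs[\nabla u_\eps]$, $\abs[\pt_t|u_\eps|]\le\abs[\pt_t u_\eps]$, and Gronwall. Hence the uniform bounds \eqref{eq:2.7} and \eqref{eq:3.2} hold with $\R^d$ replaced by $\Omega$.

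Next I would run the Cauchy-sequence construction of Section \ref{sec:2.2} on $\Omega$: choosing cutoffs $\zeta_R$ supported in $\Omega$ (for $R$ small enough, or more precisely an exhaustion of $\Omega$ by relatively compact open sets $\omega_n\subset\subset\Omega$), the computation leading to \eqref{eq:2.9} and the diagonal limit argument give $u_\eps\to u$ in $C_T(L^2_{\rm loc}(\Omega))$ and $u\in L^\infty(\R,L^2(\Omega))$; this uses only \eqref{eq:2.8}, which is domain-independent. The weak $H^1_0$-convergence (Lemma \ref{lem:2.5}) and the convergence of the nonlinear term (Lemmas \ref{lem:2.6}, \ref{lem:2.10}, \ref{lem:2.11}) transfer without change, noting that the local Sobolev embeddings used in Lemma \ref{lem:2.6} are applied on sets $\omega\subset\subset\Omega$ and so are exactly the $\R^d$ embeddings. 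Uniqueness (Lemma \ref{lem:2.9}) uses only \eqref{eq:1.5} and $\zeta_R$, so it holds on $\Omega$ too. This yields the $H^1_0$-solution $u\in C(\R,H^1_0(\Omega))$ satisfying \eqref{eq:4.2}, i.e.\ Theorem \ref{thm:4.1}, and also the first, local, part of Theorem \ref{thm:4.2}: from \eqref{eq:3.2} we get $u\in W^{1,\infty}_{\rm loc}(\R,L^2(\Omega))$, $\pt_t u_\eps(t)\wto\pt_t u(t)$ in $L^2(\Omega)$, and then from \eqref{eq:4.2} that $\pt_t u\in C_w(\R,L^2_{\rm loc}(\Omega))$ and $\Delta u\in(C_w\cap L^\infty_{\rm loc})(\R,L^2_{\rm loc}(\Omega))$, with \eqref{eq:4.3} on every $\omega\subset\subset\Omega$.

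For the final assertion I would assume $\lambda>0$ and that $\Omega$ has a bounded $C^2$ boundary, so that $D(\Delta)=H^2(\Omega)\cap H^1_0(\Omega)$ and global elliptic regularity $\norm[u]_{H^2(\Omega)}\cleq\norm[\Delta u]_{L^2(\Omega)}+\norm[u]_{L^2(\Omega)}$ holds; the $H^2$-identity of Lemma \ref{lem:3.2}, whose proof is a pointwise integration by parts with vanishing boundary terms under the Dirichlet condition, remains valid on $\Omega$. Then the a priori estimate of Proposition \ref{prop:3.3} goes through: the interpolation inequality \eqref{eq:3.4} is the one place requiring care, but under the $C^2$-boundary assumption the corresponding Gagliardo--Nirenberg inequality $\norm[\nabla f]_{L^{2p'}(\Omega)}\cleq\norm[\Delta f]_{L^2(\Omega)}^{1/2+d/(4p)}\norm[f]_{L^2(\Omega)}^{1/2-d/(4p)}+\norm[f]_{L^2(\Omega)}$ (or its homogeneous form after subtracting lower-order terms, absorbing the extra term into $C(\norm[\varphi]_{L^2})$) is standard, so we still obtain \eqref{eq:3.5} and hence $u\in L^\infty_{\rm loc}(\R,W_2)$, with $\Delta u_\eps(t)\wto\Delta u(t)$ in $L^2(\Omega)$ for all $t$. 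The further regularity of Section \ref{sec:3.3} then applies almost verbatim: Lemma \ref{lem:3.5} (weak $H^2$-continuity), Lemma \ref{lem:3.6} (the generalized dominated convergence argument for the second term of \eqref{eq:3.3}), Lemma \ref{lem:3.7} (the limiting inequality \eqref{eq:3.13}), Lemma \ref{lem:3.8} (continuity of $t\mapsto\norm[\nabla|u|(t)]_{L^2}^2$), and finally the proof of \eqref{eq:3.9}, all of which use only pointwise a.e.\ convergence along subsequences, Fatou, \eqref{eq:3.10}, and the already-established $H^1$-continuity; again the only domain-sensitive ingredient is the expansion of $\norm[\Delta\varphi+\lambda\varphi\log(|\varphi|^2)]_{L^2(\Omega)}^2$ via integration by parts, which is justified by regularizing within $D(\Delta)$ using the $C^2$ boundary. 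Combining gives $\lim_{t\to0}\norm[\Delta u(t)]_{L^2}^2=\norm[\Delta\varphi]_{L^2}^2$ and $\lim_{t\to0}\norm[u(t)\log(|u(t)|^2)]_{L^2}^2=\norm[\varphi\log(|\varphi|^2)]_{L^2}^2$, hence $u\in C(\R,W_2)$ and, via \eqref{eq:3.7} on $\Omega$, $\pt_t u\in C(\R,L^2(\Omega))$ so that \eqref{eq:4.3} holds in $L^2(\Omega)$ for all $t\in\R$.

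The main obstacle is the correct handling of the two global (as opposed to local) estimates on $\Omega$, namely the interpolation inequality \eqref{eq:3.4} and the integration-by-parts expansion of $\norm[\Delta\varphi+\lambda\varphi\log(|\varphi|^2)]_{L^2}^2$; this is precisely why the $C^2$-boundary hypothesis is imposed, guaranteeing $D(\Delta)=H^2(\Omega)\cap H^1_0(\Omega)$, global elliptic regularity, and density of smooth functions in $D(\Delta)$ for the regularization. Everything else in Sections \ref{sec:2} and \ref{sec:3} is either purely local or relies only on the domain-independent inequalities \eqref{eq:1.5}, \eqref{eq:2.8}, and the elementary logarithmic estimates \eqref{eq:1.4}, \eqref{eq:A.2}, so the transfer to a general domain is routine once these two points are settled.
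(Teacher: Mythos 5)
Your proposal follows essentially the same route as the paper: Section 4's proof of Theorem \ref{thm:4.2} is precisely that the arguments of Sections \ref{sec:2} and \ref{sec:3} transfer verbatim to the Dirichlet Laplacian on $\Omega$, the only domain-sensitive step being the interpolation inequality \eqref{eq:3.4}, replaced via elliptic regularity by $\norm[\nabla f]_{L^{2p'}}\cleq(\norm[f]_{L^2}+\norm[\Delta f]_{L^2})^{\frac12+\frac{d}{4p}}\norm[f]_{L^2}^{\frac12-\frac{d}{4p}}$ under the bounded $C^2$-boundary hypothesis, exactly as you indicate. One small correction: in the Cauchy-sequence and uniqueness steps you should keep the same cutoffs $\zeta_R(x)=\zeta(x/R)$ on $\R^d$ (this is legitimate since $\zeta_R^2(u_\eps-u_\mu)\in H^1_0(\Omega)$), rather than cutoffs supported in $\Omega$ or adapted to an exhaustion $\omega_n\subset\subset\Omega$, whose gradients need not be $O(1/R)$ and would destroy the $C/R$ gain in \eqref{eq:2.9}.
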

In the proof of Theorem \ref{thm:4.2}, we use the following interpolation inequality instead of \eqref{eq:3.4}:
\begin{align*}
\norm[\nabla f]_{L^{2p'}}\cleq \norm[f]_{H^2}^{\frac{1}{2}+\frac{d}{4p}} \norm[f]_{L^2}^{\frac{1}{2}-\frac{d}{4p}}
\cleq  \left(\norm[f]_{L^2}+\norm[\Delta f]_{L^2}\r)^{\frac{1}{2}+\frac{d}{4p}} \norm[f]_{L^2}^{\frac{1}{2}-\frac{d}{4p}},
\end{align*}
where we use the elliptic regularity (see \cite[Theorem 9.25]{Bre11}) and need to impose some regularity properties of $\Omega$.\footnote{The regularity assumption on $\Omega$ in Theorem \ref{thm:4.2} may be weakened, but we do not purse this further here.} 
The boundary condition of $\Omega$ is used only in this step, and the rest of the proof is no different from the one of Theorem \ref{thm:1.2}.


\appendix
\section{Logarithmic inequalities}
\label{sec:A}
In this section we prove logarithmic inequalities used in the body of the paper.
\begin{lemma}
\label{lem:A.1}
For all $u,v\in\C$ and $\eps,\mu>0$, we have
\begin{align}
\label{eq:A.1}
\bigl|\Im \bigl(u\log (|u|+\eps)-v\log(|v|+\mu) \bigr) (\overline{u}-\overline{v}) \bigr|
\le |u-v|^2+\abs[\eps-\mu]\abs[u-v].
\end{align}
\end{lemma}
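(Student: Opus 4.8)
The plan is to reduce \eqref{eq:A.1} to the known inequality \eqref{eq:1.5}, with an explicit error term tracking the dependence on $\eps,\mu$. The point is that $|u|$ and $|u|+\eps$ differ by a bounded amount, and the logarithm is Lipschitz away from a neighbourhood of its singularity, so replacing $\log|u|$ by $\log(|u|+\eps)$ costs only a controllable perturbation.

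First I would write the left-hand side of \eqref{eq:A.1} by adding and subtracting the terms $u\log|u|$ and $v\log|v|$:
\begin{align*}
\Im\bigl(u\log(|u|+\eps)-v\log(|v|+\mu)\bigr)(\overline u-\overline v)
&=\Im\bigl(u\log|u|-v\log|v|\bigr)(\overline u-\overline v)
\\
&\quad+\Im\Bigl(u\log\tfrac{|u|+\eps}{|u|}-v\log\tfrac{|v|+\mu}{|v|}\Bigr)(\overline u-\overline v).
\end{align*}
By \eqref{eq:1.5} the first term is bounded in absolute value by $|u-v|^2$. For the second term, the key observation is that the map $s\mapsto s\log(1+\eps/s)$ (extended by $0$ at $s=0$) is what appears, and more usefully that one can estimate the full quantity $z\mapsto z\log\frac{|z|+\eps}{|z|}$ directly: note $0\le \log\frac{|z|+\eps}{|z|}$ has no good pointwise bound, but multiplied by $|z|$ one has $0\le |z|\log\frac{|z|+\eps}{|z|}\le\eps$, since $\log(1+t)\le t$ with $t=\eps/|z|$. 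Hence $\bigl|z\log\frac{|z|+\eps}{|z|}\bigr|\le\eps$ for all $z\in\C$. Therefore
\begin{align*}
\Bigl|\Im\Bigl(u\log\tfrac{|u|+\eps}{|u|}-v\log\tfrac{|v|+\mu}{|v|}\Bigr)(\overline u-\overline v)\Bigr|
\le\bigl(\eps+\mu\bigr)|u-v|,
\end{align*}
which is \emph{not} quite the claimed bound $\abs[\eps-\mu]\,|u-v|$. So a cruder splitting is too lossy, and the real work is to exploit a cancellation between the $\eps$ and $\mu$ contributions.

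The fix I would pursue is a more symmetric decomposition: write $u\log(|u|+\eps)-v\log(|v|+\mu)$ as $\bigl(u\log(|u|+\eps)-v\log(|v|+\eps)\bigr)+v\bigl(\log(|v|+\eps)-\log(|v|+\mu)\bigr)$. For the first bracket, one needs a version of \eqref{eq:1.5} with the common shift $\eps$ in place of $0$; this should follow by the same elementary argument that proves \eqref{eq:1.5} (the proof of \eqref{eq:1.5} only uses monotonicity of $\log$ and an algebraic identity, both of which survive the shift), giving the bound $|u-v|^2$. For the second term, $\bigl|\log(|v|+\eps)-\log(|v|+\mu)\bigr|=\bigl|\log\frac{|v|+\eps}{|v|+\mu}\bigr|\le\frac{|\eps-\mu|}{|v|+\min(\eps,\mu)}$, so $\bigl|v(\log(|v|+\eps)-\log(|v|+\mu))\bigr|\le|\eps-\mu|$, using $|v|\le|v|+\min(\eps,\mu)$. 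Pairing with $(\overline u-\overline v)$ gives exactly $|\eps-\mu|\,|u-v|$. Adding the two contributions yields \eqref{eq:A.1}.

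I expect the main obstacle to be establishing the shifted version of \eqref{eq:1.5}, i.e. $\bigl|\Im\bigl(u\log(|u|+\eps)-v\log(|v|+\eps)\bigr)(\overline u-\overline v)\bigr|\le|u-v|^2$ for the \emph{common} parameter $\eps$; everything else is a one-line estimate. This should reduce, as in \cite[Lemme 1.1.1]{CH80}, to showing $\Im\bigl((a-b)(\overline u-\overline v)\bigr)$ with $a=u\log(|u|+\eps)$, $b=v\log(|v|+\eps)$ is controlled, which one does by writing $\Im(a\overline u)=\Im(b\overline v)=0$ and bounding $\bigl|\Im(a\overline v)+\Im(b\overline u)\bigr|=\bigl|\Im(\overline v u)\bigr|\,\bigl|\log(|u|+\eps)-\log(|v|+\eps)\bigr|$, then using $\bigl|\log(|u|+\eps)-\log(|v|+\eps)\bigr|\le\frac{\bigl||u|-|v|\bigr|}{\min(|u|,|v|)+\eps}$ together with $\bigl|\Im(\overline v u)\bigr|\le|v|\,|u-v|$ (or the symmetric bound) to absorb the denominator and recover $|u-v|^2$. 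This is precisely the place where the argument must be carried out with care, and I would present it as the core computation, deferring the rest to the short estimates above. Since the paper states \eqref{eq:1.5} is already known, an alternative is to invoke it as a black box after the symmetric reduction, but writing the shifted variant directly is cleaner and self-contained.
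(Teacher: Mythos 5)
Your proof is correct, and it reaches \eqref{eq:A.1} by a slightly different route than the paper. The paper's proof works in one shot: it uses the same algebraic identity you use (only the cross term survives, so the left-hand side equals $\abs[\Im(u\overline v)]\,\abs[\log(|u|+\eps)-\log(|v|+\mu)]$), and then bounds the log difference directly via the integral representation $\int_0^1\frac{d}{dt}\log\bigl(t(|u|+\eps)+(1-t)(|v|+\mu)\bigr)\,dt\le\frac{|u-v|+|\eps-\mu|}{\min\{|u|,|v|\}}$, which handles the $|u-v|$ and $|\eps-\mu|$ contributions simultaneously; combined with $\abs[\Im(u\overline v)]\le\min\{|u|,|v|\}\,|u-v|$ this gives \eqref{eq:A.1} immediately. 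You instead split off the shift $\mu\to\eps$ first, prove the ``common-$\eps$'' variant of \eqref{eq:1.5} (your core computation, which is exactly the paper's identity-plus-mean-value argument specialized to $\mu=\eps$), and bound $\abs[v(\log(|v|+\eps)-\log(|v|+\mu))]\le|\eps-\mu|$ separately. Both are sound; the paper's version is marginally shorter because the single integral bound makes your two-term decomposition unnecessary, while your version isolates a reusable shifted form of \eqref{eq:1.5}. Two cosmetic points you share with the paper: when $\min\{|u|,|v|\}=0$ the quotient bound degenerates, but then $\Im(u\overline v)=0$ so the term vanishes (or, cleaner, keep the factor $\frac{\min\{|u|,|v|\}}{\min\{|u|,|v|\}+\eps}\le1$ before dividing); and you correctly diagnosed that your first, cruder splitting only yields $(\eps+\mu)|u-v|$ and discarded it.
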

\begin{proof}
We first note that
\begin{align*}
\Im\bigl[u\log (|u|+\eps)-v\log(|v|+\mu) (\overline{u}-\overline{v})\bigr]
=\Im(u\overline{v})\bigl(\log(|u|+\eps)-\log(|v|+\mu) \bigr).
\end{align*}
Then using the inequalities
\begin{align*}
\Im (u\overline{v})\le \min\{|u|, |v|\}\abs[u-v]
\end{align*}
and
\begin{align*}
\left|\log (|u|+\eps)-\log(|v|+\mu)\r|
&=\left|\int_0^1 \frac{d}{dt} \log\bigl( t(|u|+\eps)+(1-t)(|v|+\mu) \bigr) dt\r|
\\
&=\left|\int_0^1 \frac{|u|+\eps-(|v|+\mu)}{t|u|+(1-t)|v|+t\eps+(1-t)\mu}dt\r|
\\
&\leq \frac{1}{\min\{|u|,|v|\}} \left(|u-v|+|\eps-\mu|\r), 
\end{align*}
we obtain \eqref{eq:A.1}.
\end{proof}
\begin{lemma}
\label{lem:A.2}
For $\alpha\in (0,1)$ there exists $C(\alpha){>0}$ such that for all $u,v\in\C$, $\eps\in(0,1)$
\begin{align}
\label{eq:A.2}
&\abs[ {v\log(|v|+\eps)-u\log\abs[u]} ]
\le
\begin{aligned}[t]
&\eps+\abs[u-v]+C(\alpha)\times
\\&{}\bigl(1+|u|^{1-\alpha}\log^+\abs[u]+|v|^{1-\alpha}\log^+\abs[v] \bigr)
\abs[u-v]^\alpha,
\end{aligned}
\end{align}
where $\log^+x\ce\max\{\log x , 0\}$.
\end{lemma}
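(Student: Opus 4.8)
The plan is to prove \eqref{eq:A.2} by splitting $v\log(|v|+\eps)-u\log|u|$ into two pieces: first replace $\log(|v|+\eps)$ by $\log|v|$, then compare $v\log|v|$ with $u\log|u|$. For the first piece, note that $0\le \log(|v|+\eps)-\log|v| = \log(1+\eps/|v|)$, and multiplying by $|v|$ gives $|v|\log(1+\eps/|v|)\le \eps$ by the elementary inequality $\log(1+s)\le s$; this contributes the $\eps$ term on the right-hand side. (One has to be slightly careful at $|v|=0$, where $v\log(|v|+\eps)=0$, so the bound is trivial there.) This reduces matters to estimating $\abs[v\log|v|-u\log|u|]$.

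For the main piece I would use the function $h(z)\ce z\log|z|$ on $\C$ (with $h(0)=0$) and seek a H\"older-type bound $\abs[h(v)-h(u)]\cleq (1+|u|^{1-\alpha}\log^+|u|+|v|^{1-\alpha}\log^+|v|)\abs[u-v]^\alpha$. The natural route is to separate into the regime where $|u|,|v|$ are both bounded (say $\le 2$, or more precisely where $|u-v|$ is comparable to the smaller of the moduli) and the regime where the modulus is large. Near the origin $h$ is only H\"older continuous, not Lipschitz: the modulus of continuity of $s\mapsto s\log s$ on $[0,1]$ is controlled, for each $\alpha\in(0,1)$, by $C(\alpha)|s-s'|^\alpha$, because $|\frac{d}{ds}(s\log s)| = |\log s+1|$ blows up only logarithmically and $\log$ is integrable. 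This handles the bounded regime with an $O(1)$ constant, matching the ``$1$'' inside the parentheses. For large modulus, write $h(v)-h(u) = (v-u)\log|u| + v(\log|v|-\log|u|)$; the first term is bounded by $|u-v|\log^+|u|\le |u-v|^\alpha\,|u-v|^{1-\alpha}\log^+|u|$, and on the relevant region $|u-v|^{1-\alpha}\cleq (|u|+|v|)^{1-\alpha}$, giving the $|u|^{1-\alpha}\log^+|u|$-type term; the second term is handled via $|\log|v|-\log|u||\le |v-u|/\min\{|u|,|v|\}$ exactly as in the proof of Lemma~\ref{lem:A.1}, and then $\frac{|v|}{\min\{|u|,|v|\}}\le 1+\frac{|u-v|}{\min\{|u|,|v|\}}$ together with interpolating one power $|u-v|^\alpha$ out of $|u-v|$ yields a bound of the same shape.

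Concretely I would organize the proof as: (1) the $\eps$-reduction step as above; (2) prove an auxiliary one-variable lemma that $|s\log s - s'\log s'|\le C(\alpha)(1+s^{1-\alpha}\log^+ s + (s')^{1-\alpha}\log^+ s')|s-s'|^\alpha$ for $s,s'\ge0$, by the mean value theorem away from $0$ plus a direct estimate near $0$ using $\int_0^\tau|\log\sigma|\,d\sigma\cleq \tau|\log\tau|+\tau\cleq_\alpha \tau^\alpha$ for $\tau\le 1$; (3) upgrade from moduli to complex numbers by writing $v\log|v| = |v|\log|v|\cdot e^{i\arg v}$ and controlling the phase difference via $\big||v|e^{i\arg v}-|u|e^{i\arg u}\big| = |v-u|$, i.e. bounding $\abs[v\log|v|-u\log|u|]\le \big|\,|v|\log|v|-|u|\log|u|\,\big| + |u\log|u||\cdot|e^{i\arg v}-e^{i\arg u}|$ and noting $|e^{i\arg v}-e^{i\arg u}|\le |u-v|/\min\{|u|,|v|\}$, absorbing as before; (4) add the two contributions. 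The main obstacle is bookkeeping the constants so that exactly the claimed form — an $O(1)$ term plus $|u|^{1-\alpha}\log^+|u|$ and $|v|^{1-\alpha}\log^+|v|$ terms, each multiplied by $|u-v|^\alpha$ — comes out, in particular making the case distinction ``$|u-v|$ large vs.\ small compared to $\min\{|u|,|v|\}$'' clean; the analytic input (H\"older continuity of $s\log s$ at $0$, log-derivative bound) is elementary.
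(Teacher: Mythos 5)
Your plan shares the paper's elementary toolkit — the bound $\abs[\log|v|-\log|u|]\le\frac{|u-v|+\eps}{\min\{|u|,|v|\}}$, the interpolation $|u-v|=|u-v|^{\alpha}|u-v|^{1-\alpha}$, and the boundedness of $s\mapsto s^{1-\alpha}\log s$ near $0$ — and your preliminary step $\abs[v\log(|v|+\eps)-v\log|v|]=|v|\log(1+\eps/|v|)\le\eps$ is a perfectly good (indeed slightly cleaner) way to isolate the $\eps$ term. The gap is in the regime where the two moduli are of very different sizes, say $|u|<1\le|v|$. There your ``large modulus'' decomposition $h(v)-h(u)=(v-u)\log|u|+v\bigl(\log|v|-\log|u|\bigr)$ is the wrong one: taking $u=\delta\downarrow0$, $v=1$, both terms blow up like $\abs[\log\delta]$ while the left-hand side of \eqref{eq:A.2} stays bounded, so no estimate of the two terms separately can close; concretely, your claimed bound $\abs[(v-u)\log|u|]\le|u-v|\log^{+}|u|$ is false for $|u|<1$, and your treatment of the second term via $\frac{|v|}{\min\{|u|,|v|\}}\le1+\frac{|u-v|}{\min\{|u|,|v|\}}$ leaves the uncontrolled quantity $\frac{|u-v|^{2}}{\min\{|u|,|v|\}}$. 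The same defect recurs in your phase step, where the bound $\abs[e^{i\arg v}-e^{i\arg u}]\le|u-v|/\min\{|u|,|v|\}$ leaves you with $\abs[\log|u|]\,|u-v|$ when $|u|=\min$ is small. You flag the case distinction as ``bookkeeping'', but it is precisely the missing idea, and it is where the lemma's content lies.

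The fix — which is the paper's actual mechanism — is to choose the decomposition according to which modulus is smaller: if $|v|\le|u|$ write $v\bigl(\log(|v|+\eps)-\log|u|\bigr)+(v-u)\log|u|$, and if $|u|\le|v|$ write $u\bigl(\log(|v|+\eps)-\log|u|\bigr)+(v-u)\log(|v|+\eps)$. Then the factor multiplying the log-difference is always the smaller modulus, the $\min$ cancels exactly and that term is $\le|u-v|+\eps$; the leftover logarithm is always attached to the \emph{larger} modulus $M\ce\max\{|u|,|v|\}$ (up to $\eps$), and since $|u-v|\le2M$ one gets $|u-v|^{1-\alpha}\abs[\log M]\cleq M^{1-\alpha}\abs[\log M]\cleq 1+M^{1-\alpha}\log^{+}M$, which is exactly the claimed prefactor. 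With this symmetric case split your argument closes (and the detour through moduli and phases becomes unnecessary, since the decomposition works directly for complex $u,v$); without it, the written estimates fail precisely in the non-Lipschitz regime $|u|\ll1\le|v|$ that the lemma is designed to handle.
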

\begin{proof}
Similarly to the proof of Lemma \ref{lem:A.1}, we obtain
\begin{align}
\label{eq:A.3}
\begin{aligned}
\abs[ \log(|v|+\eps)-\log|u| ]
&=\left| \int_0^1\frac{d}{dt}\log\Bigl( t(|v|+\eps)+(1-t)|u|\Bigr) dt \r|
\\
&=\left| \int_0^1 \frac{(|v|+\eps)-|u|}{ t(|v|+\eps)+(1-t)|u|}dt \r|
\le \frac{|u-v|+\eps}{ \min\{|u|, |v|\} }.
\end{aligned}
\end{align}
We now distinguish two cases.
\\[3pt]
{\bf Case 1: $|v|\le|u|$}. We note that
\begin{align*}
v\log(|v|+\eps)-u\log|u|
=v\left(\log(|v|+\eps)-\log|u|\r)+(v-u)\log|u|.
\end{align*}
Using \eqref{eq:A.3}, we estimate the first term on the right-hand side as
\begin{align*}
\abs[ v\left(\log(|v|+\eps)-\log|u|\r) ]\le |v|\cdot \frac{|u-v|+\eps}{ |v| }
=|u-v|+\eps
\end{align*}
and the second term as
\begin{align*}
\abs[ (v-u)\log|u|] &\le |v-u|^\alpha\abs[v-u]^{1-\alpha}\log |u|
\\
&\cleq|v-u|^\alpha\abs[u]^{1-\alpha}\log|u|
\cleq |v-u|^\alpha(1+\abs[u]^{1-\alpha}\log^+|u|),
\end{align*}
where we have used that $(0,1)\ni s\mapsto s^{1-\alpha}\log s$ is bounded. Therefore, we have
\begin{align*}
\abs[v\log(|v|+\eps)-u\log|u| ]\cleq |u-v|+\eps +  |v-u|^\alpha(1+\abs[u]^{1-\alpha}\log^+|u|).
\end{align*}
\noindent{\bf Case 2: $|u|\le|v|$}. In this case we write
\begin{align*}
v\log(|v|+\eps)-u\log|u|
=u\left( \log(|v|+\eps)-\log|u|\r)+\log(|v|+\eps)(v-u).
\end{align*}
Similarly to Case 1, we obtain
\begin{align*}
\abs[ u\left( \log(|v|+\eps)-\log|u|\r) ] &\le |u-v|+\eps,
\\[3pt]
\abs[ \log(|v|+\eps)(u-v)]&\cleq \abs[\log(|v|+\eps)]|v|^{1-\alpha}\abs[u-v]^{\alpha}
\\
&\cleq (1+|v|^{1-\alpha}\log^+|v|)\abs[u-v]^{\alpha},
\end{align*}
where the inequalities hold independent of $\eps\in(0,1)$.

Gathering the estimates in Case 1 and Case 2, we obtain \eqref{eq:A.2}.
\end{proof}

\section{The energy spaces $W_1$ and $W_2$}
\label{sec:B}%
\subsection{Characterization in terms of Orlicz spaces}
In this subsection we see that the sets $W_1$ and $W_2$ defined by \eqref{eq:1.6} and \eqref{eq:1.7} respectively are characterized in terms of Orlicz spaces. By following \cite{C83}, we organize the bare minimum of basic properties of $W_1$ and $W_2$ relevant to this paper. We refer to \cite{KR61} for more information about Orlicz spaces.

We define the functions $F_1$, $F_2$, $A_1$, and $A_2$ on $\R^+$ by
\begin{align*}
F_1(s)&={-}s^2\log(s)^2,\quad F_2(s)=s^2\bigl(\log (s^2)\bigr)^2,
\\
A_1(s)&=
\left\{
\begin{aligned}
&{-}s^2\log (s^2)&&\text{if}~0\le s\le e^{-3},
\\
&3s^2+4e^{-3}s-e^{-6}&&\text{if}~s\ge e^{-3},
\end{aligned}
\r.
\\
A_2(s)&=
\left\{
\begin{aligned}
&s^2\bigl(\log (s^2)\bigr)^2&&\text{if}~0\le s\le e^{-3},
\\
&4s^2+40e^{-3}s-8e^{-6}&&\text{if}~s\ge e^{-3}.
\end{aligned}
\r.
\end{align*}
We note that both $A_1$ and $A_2$ are convex $C^1$-functions, which are $C^2$ and positive except at origin. We define the set $X_m$ by
\begin{align*}
X_m&=\left\{ \varphi\in L^1_{\rm loc}(\R^d) :A_m(|\varphi|)\in L^1(\R^d)\r\},
\end{align*}
and the Luxembourg norm by
\begin{align*}
\norm[\varphi]_{X_m}=\inf\left\{ k>0 : \int A_m\left(\frac{|\varphi|}{k}\r)\le 1 \r\}
\end{align*}
for $m=1,2$.
\begin{lemma}
\label{lem:B.1}
For $m=1,2$, the following properties hold:
\begin{enumerate}
\item $(X_m, \norm[\,\cdot\,]_{X_m})$ is a Banach space.
\label{enu:1}

\item If $\varphi_n\to\varphi$ in $X_m$, then $A_m(|\varphi_n|)\to A_m(|\varphi|)$ in $L^1(\R^d)$.
\label{enu:2}

\item If $\varphi_n\to\varphi$ a.e. and if 
\begin{align*}
\int A_m(|\varphi_n|) \to \int A_m(|\varphi|), 
\end{align*}
then $\varphi_n\to\varphi$ in $X_m$.
\label{enu:3}
\end{enumerate}
\end{lemma}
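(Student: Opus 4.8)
The statement to prove is Lemma~\ref{lem:B.1}, asserting that each $(X_m,\norm[\cdot]_{X_m})$ is a Banach space and enjoys the two convergence properties (ii) and (iii). The plan is to treat these as standard facts about Orlicz spaces built from a Young function, adapted to the specific $A_m$ at hand, and to verify the one structural hypothesis that makes everything work: that $A_m$ satisfies the $\Delta_2$-condition near infinity and near zero. Since $A_m$ grows like $s^2$ (times a logarithmic factor for $A_2$) one checks directly that $A_m(2s)\le C A_m(s)$ for all $s\ge 0$; near the origin this is where the precise definition of $A_m$ matters, and the cutoff at $s=e^{-3}$ was chosen exactly so that $A_m$ is convex, $C^1$, and comparable to $s^2\abs[\log s^2]^{m-1}\cdot(\text{power of }\log)$ with the $\Delta_2$-property surviving. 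Once $\Delta_2$ holds, the Orlicz space $X_m=L^{A_m}(\R^d)$ coincides with the set of $\varphi$ with $\int A_m(\abs[\varphi])<\infty$, and the modular and the Luxemburg norm induce the same notion of convergence.

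For part (i), I would first verify that $A_m$ is a Young function (convex, $A_m(0)=0$, $A_m(s)\to\infty$), so that $\norm[\cdot]_{X_m}$ is genuinely a norm: positive homogeneity and the triangle inequality follow from convexity of $A_m$ in the usual way, and $\norm[\varphi]_{X_m}=0\Rightarrow\varphi=0$ from positivity of $A_m$ away from the origin. Completeness is the standard Riesz--Fischer-type argument: given a Cauchy sequence $(\varphi_n)$ in $X_m$, extract a subsequence with $\norm[\varphi_{n_{k+1}}-\varphi_{n_k}]_{X_m}\le 2^{-k}$, show the partial sums converge a.e.\ and in $X_m$ by a monotone/Fatou argument on the modular $\int A_m(\cdot)$, identify the limit $\varphi$, and then upgrade a.e.\ convergence of the full sequence to norm convergence using the Cauchy property together with Fatou applied to $\int A_m\bigl((\varphi_n-\varphi)/k\bigr)$. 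This is where the $\Delta_2$-condition is used to ensure $\int A_m(\abs[\varphi])<\infty$ implies $\varphi\in X_m$ with finite norm and that small modular implies small norm.

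For part (ii): if $\varphi_n\to\varphi$ in $X_m$, then for each fixed $k>0$ eventually $\int A_m\bigl(\abs[\varphi_n-\varphi]/k\bigr)\le 1$; combined with convexity $A_m(\abs[\varphi_n])\le \tfrac12 A_m(2\abs[\varphi_n-\varphi])+\tfrac12 A_m(2\abs[\varphi])$ and the $\Delta_2$-bound $A_m(2s)\le C A_m(s)$, one gets domination of $A_m(\abs[\varphi_n])$ by an $L^1$-convergent majorant, so the generalized dominated convergence theorem (the same one used in Lemma~\ref{lem:3.6}) gives $A_m(\abs[\varphi_n])\to A_m(\abs[\varphi])$ in $L^1$. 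Alternatively, and more cleanly, one shows the modular $\varphi\mapsto\int A_m(\abs[\varphi])$ is continuous on $X_m$ directly from $\Delta_2$. For part (iii): given $\varphi_n\to\varphi$ a.e.\ with $\int A_m(\abs[\varphi_n])\to\int A_m(\abs[\varphi])<\infty$, apply the generalized dominated convergence theorem with the majorants $A_m(\abs[\varphi_n]+\abs[\varphi])$, controlled again via convexity and $\Delta_2$, to conclude $\int A_m\bigl(\abs[\varphi_n-\varphi]\bigr)\to 0$, and then use $\Delta_2$ once more to pass from vanishing of the modular of $\varphi_n-\varphi$ to $\norm[\varphi_n-\varphi]_{X_m}\to 0$ (for any $k>0$, $\int A_m(\abs[\varphi_n-\varphi]/k)\le C_k\int A_m(\abs[\varphi_n-\varphi])\to 0$, hence the infimum defining the norm tends to $0$). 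The main obstacle is purely bookkeeping: verifying the $\Delta_2$-condition for $A_m$ uniformly across the junction at $s=e^{-3}$ and keeping track of the constants, after which (i)--(iii) are the textbook Orlicz-space facts (see \cite{KR61}, and \cite{C83} for this exact setup).
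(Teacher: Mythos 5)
Your proposal is correct and follows essentially the same route as the paper, which simply defers to \cite[Lemma 2.1]{C83} (for $m=1$, with the remark that the identical proof works for $m=2$): that cited proof rests on exactly the Orlicz-space machinery you spell out, namely that $A_m$ is a convex Young function satisfying the $\Delta_2$-condition (the quadratic growth at infinity and the logarithmic behaviour near $0$, glued at $s=e^{-3}$), after which (i)--(iii) are the standard facts from \cite{KR61}. The only bookkeeping point to make explicit in (ii) is that norm convergence first yields modular convergence (via $\Delta_2$), hence convergence in measure and a.e.\ convergence along subsequences, which is what licenses the generalized dominated convergence argument; this is routine and does not affect the validity of your approach.
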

\begin{proof}
The proof for $m=1$ is given in \cite[Lemma 2.1]{C83}. The same proof still holds for $m=2$.
\end{proof}
The sets $W_1$ and $W_2$ are represented by $X_1$ and $X_2$ as follows.
\begin{lemma}
\label{lem:B.2}
For $m=1,2$, $W_m=H^m\cap X_m$ and then $W_m$ is a Banach space equipped with the norm 
$\norm[\varphi]_{W_m}\ce\norm[\varphi]_{H^m}+\norm[\varphi]_{X_m}$.
\end{lemma}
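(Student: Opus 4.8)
The plan is to prove Lemma \ref{lem:B.2} by showing the set-theoretic identity $W_m = H^m(\R^d) \cap X_m$ first, and then deducing the Banach space structure from Lemma \ref{lem:B.1}\eqref{enu:1} together with the completeness of $H^m(\R^d)$. For the identity, the key observation is that the function $A_m$ differs from $F_m$ (respectively $|F_m|$ for $m=1$, since $F_1 \le 0$) only in the region $s \ge e^{-3}$, where both $A_m(s)$ and $|F_m(s)|$ are controlled by $s^2 + s^{2+\delta}$ for small $\delta > 0$ — hence both are integrable whenever $\varphi \in H^m(\R^d) \hookrightarrow L^2 \cap L^{2+\delta}$ (by Sobolev embedding, using $2 + \delta < 2^*$). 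Near the origin, $0 \le s \le e^{-3}$, one has $A_m(|\varphi|) = |F_m(|\varphi|)|$ exactly (note $\log(s^2) < 0$ there, so $A_1(s) = -s^2\log(s^2) = |F_1(s)|$), so the condition $A_m(|\varphi|) \in L^1$ is equivalent there to $|\varphi|^2 |\log(|\varphi|^2)|^{\,m} \cdot (\text{a bounded factor}) \in L^1$. For $m=1$ this is precisely $|\varphi|^2\log(|\varphi|^2) \in L^1$ in a neighbourhood of the origin-set; for $m=2$ one must also observe $|\varphi\log(|\varphi|^2)|^2 = |\varphi|^2(\log(|\varphi|^2))^2$.

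First I would record the elementary pointwise bounds: for $s \ge e^{-3}$ and any $\delta \in (0,1)$ there is $C_\delta$ with $A_m(s) + |F_m(s)| \le C_\delta(s^2 + s^{2+\delta})$; for $0 \le s \le e^{-3}$ one has $A_1(s) = |F_1(s)|$ and $A_2(s) = F_2(s)$ identically. Combining, for $\varphi \in H^m(\R^d)$ the "far" part $\int_{|\varphi| \ge e^{-3}} A_m(|\varphi|)$ is always finite by the Sobolev embedding $H^1(\R^d) \hookrightarrow L^{2+\delta}(\R^d)$ with $2+\delta < 2^*$ (choosing such $\delta$), so $\varphi \in X_m$ iff $\int_{|\varphi| \le e^{-3}} A_m(|\varphi|) < \infty$. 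Then I would show that, again for $\varphi \in H^m(\R^d)$, the quantity $|\varphi|^2|\log(|\varphi|^2)|^m$ (whose integrability is the defining condition for $W_m$, recalling \eqref{eq:1.6} and \eqref{eq:1.7} and that $\varphi\log(|\varphi|^2) \in L^2 \iff |\varphi|^2(\log(|\varphi|^2))^2 \in L^1$) has its "far" part $\int_{|\varphi|\ge e^{-3}}$ likewise automatically finite by the same Sobolev bound, so $\varphi \in W_m$ iff $\int_{|\varphi|\le e^{-3}} |\varphi|^2|\log(|\varphi|^2)|^m < \infty$. Since on $\{|\varphi| \le e^{-3}\}$ the integrands $A_m(|\varphi|)$ and $|\varphi|^2|\log(|\varphi|^2)|^m$ coincide (up to the harmless constant factors $1$; indeed they are literally equal), the two conditions match, giving $W_m = H^m(\R^d) \cap X_m$.

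For the Banach space assertion, $\norm[\varphi]_{W_m} = \norm[\varphi]_{H^m} + \norm[\varphi]_{X_m}$ is visibly a norm on the intersection. If $(\varphi_n)$ is Cauchy for $\norm[\,\cdot\,]_{W_m}$, it is Cauchy in $H^m(\R^d)$ and in $X_m$; by completeness of both (Lemma \ref{lem:B.1}\eqref{enu:1} for $X_m$) there are limits $\psi \in H^m$, $\chi \in X_m$, and passing to a subsequence converging a.e.\ forces $\psi = \chi$ a.e., so the common limit lies in $H^m \cap X_m = W_m$ and $\varphi_n \to \psi$ in $\norm[\,\cdot\,]_{W_m}$. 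Hence $W_m$ is complete.

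The main obstacle is a careful bookkeeping issue rather than a deep one: one must check that the modification of $F_m$ into $A_m$ for $s \ge e^{-3}$ — and the precise choice of the affine/quadratic pieces making $A_m$ a convex $C^1$ function — does not affect the $L^1$ class, which is exactly where the Sobolev embedding $H^1 \hookrightarrow L^{2+\delta}$ (requiring $d \le 2$ trivially, and $2 + \delta < 2^* = \tfrac{2d}{d-2}$ for $d \ge 3$) enters; the verification that $A_m$ is convex and $C^1$ at the junction $s = e^{-3}$ is a routine computation matching values and first derivatives, which I would state is elementary and omit, as in \cite{C83}.
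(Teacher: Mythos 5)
Your proposal is correct and follows essentially the same route as the paper: the paper simply packages your near/far splitting as the single bound $\abs[F_m(s)-A_m(s)]\cleq s^{2+\delta}$ (the difference vanishes for $s\le e^{-3}$), so that for $\varphi\in H^m$ Sobolev embedding gives $F_m(|\varphi|)\in L^1\iff A_m(|\varphi|)\in L^1$, and then invokes Lemma \ref{lem:B.1} \ref{enu:1}. Your explicit completeness-of-the-intersection argument is exactly what the paper leaves implicit, so there is no substantive difference.
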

\begin{proof}
We follow the argument of \cite[Proposition 2.2]{C83}. We set 
\begin{align*}
B_m(s)=F_m(s)-A_m(s)
\end{align*}
for $m=1,2$. For any small $\delta>0$ 
we have
\begin{align*}
\int \abs[B_m(|\varphi|)] \cleq \int |\varphi|^{2+\delta},
\end{align*}
which yields that $B_m(|\varphi|)\in L^1(\R^d)$ if $\varphi\in H^m(\R^d)$. Therefore we deduce that
\begin{align}
\label{eq:B.1}
\text{if $\varphi\in H^m(\R^d)$,}\quad
F_m(|\varphi|)\in L^1(\R^d){\iff} A_m(|\varphi|)\in L^1(\R^d).
\end{align}
Hence the result follows from Lemma \ref{lem:B.1} \ref{enu:1}.
\end{proof}
\begin{lemma}
\label{lem:B.3}
For $m=1,2$, if $u\in C(\R, H^m(\R^d))$, then
\begin{align*}
t\mapsto \int F_m(|u(t)|)\in C(\R) \iff u\in C(\R,W_m).
\end{align*}
\end{lemma}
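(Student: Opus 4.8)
\textbf{Proof plan for Lemma \ref{lem:B.3}.}
The statement is an equivalence, and I would prove the two implications separately. The direction $u\in C(\R,W_m)\Rightarrow t\mapsto\int F_m(|u(t)|)\in C(\R)$ is the easy one: by Lemma \ref{lem:B.2}, convergence in $W_m$ forces convergence in $X_m$, and then Lemma \ref{lem:B.1}\ref{enu:2} gives $A_m(|u(t_n)|)\to A_m(|u(t)|)$ in $L^1(\R^d)$ whenever $t_n\to t$; in particular $\int A_m(|u(t_n)|)\to\int A_m(|u(t)|)$. Since $B_m(s)=F_m(s)-A_m(s)$ satisfies $|B_m(s)|\lesssim s^{2+\delta}$ (as in the proof of Lemma \ref{lem:B.2}), the map $t\mapsto\int B_m(|u(t)|)$ is continuous by continuity of $u$ in $H^m\hookrightarrow L^{2+\delta}$ together with dominated convergence (or directly by the local Lipschitz-type bound $|B_m(s)-B_m(s')|\lesssim(s^{1+\delta}+s'^{1+\delta})|s-s'|$). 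Adding, $t\mapsto\int F_m(|u(t)|)$ is continuous.

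For the converse, assume $u\in C(\R,H^m(\R^d))$ and $t\mapsto\int F_m(|u(t)|)\in C(\R)$. Fix $t_0\in\R$ and a sequence $t_n\to t_0$; I must show $u(t_n)\to u(t_0)$ in $W_m$, i.e. (by Lemma \ref{lem:B.2}) in $X_m$, since convergence in $H^m$ is already given. By the same decomposition, $\int B_m(|u(t_n)|)\to\int B_m(|u(t_0)|)$ from $H^m$-convergence, so the hypothesis $\int F_m(|u(t_n)|)\to\int F_m(|u(t_0)|)$ yields $\int A_m(|u(t_n)|)\to\int A_m(|u(t_0)|)$. Moreover $H^m$-convergence gives, along a subsequence, $u(t_{n_k})\to u(t_0)$ a.e. Now Lemma \ref{lem:B.1}\ref{enu:3} applies along that subsequence and gives $u(t_{n_k})\to u(t_0)$ in $X_m$. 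The usual subsequence argument (every subsequence of $(t_n)$ has a further subsequence along which $u(t_n)\to u(t_0)$ in $X_m$, and the limit is independent of the subsequence) then upgrades this to convergence of the full sequence, proving continuity at $t_0$.

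The only mild subtlety is the bookkeeping in the last step: Lemma \ref{lem:B.1}\ref{enu:3} requires a.e.\ convergence, which in general only holds along a subsequence, so one cannot conclude $X_m$-convergence of the whole sequence in one shot. I would handle this by the standard ``subsequence of a subsequence'' device: given $t_n\to t_0$, pick any subsequence; extract a further subsequence along which $u\to u(t_0)$ a.e.\ (possible since $u(t_n)\to u(t_0)$ in $H^m\subset L^2$); apply Lemma \ref{lem:B.1}\ref{enu:3} to get $X_m$-convergence along it; conclude that the original sequence converges in $X_m$ to $u(t_0)$. I do not expect any genuine obstacle here — the proof is a routine packaging of Lemmas \ref{lem:B.1} and \ref{lem:B.2} together with the elementary estimate on $B_m$.
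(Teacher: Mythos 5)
Your proposal is correct and follows essentially the same route as the paper: decompose $F_m=A_m+B_m$, use the local Lipschitz-type bound $\abs[B_m(|u|)-B_m(|v|)]\cleq(|u|+|v|)^{1+\delta}|u-v|$ to reduce continuity of $t\mapsto\int F_m(|u(t)|)$ to that of $t\mapsto\int A_m(|u(t)|)$, and then invoke Lemma \ref{lem:B.1} (ii), (iii) together with Lemma \ref{lem:B.2}. The subsequence-of-a-subsequence bookkeeping you add to get a.e.\ convergence before applying Lemma \ref{lem:B.1} (iii) is exactly what the paper leaves implicit, so there is no gap.
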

\begin{proof}
We first note that for any small $\delta>0$
\begin{align*}
\abs[ B_m(|u|)-B_m(|v|) ] \cleq (|u|+|v|)^{1+\delta}\abs[u-v]
\quad\text{for all}~u,v\in\C.
\end{align*}
This yields that if $u\in C(\R,H^m(\R^d))$, then 
\begin{align*}
t\mapsto \int B_m(|u(t)|)\in C(\R).
\end{align*}
Therefore, if $u\in C(\R, H^m(\R^d))$, then
\begin{align*}
t\mapsto \int F_m(|u(t)|)\in C(\R)\iff 
t\mapsto \int A_m(|u(t)| ) \in C(\R).
\end{align*}
Hence the result follows from 
Lemma \ref{lem:B.1} \ref{enu:2}, \ref{enu:3}, and Lemma \ref{lem:B.2}.
\end{proof}

\subsection{Inclusion relation in function spaces}
In this subsection we study the inclusion relation \eqref{eq:1.8} in function spaces.
First we have the following result.
\begin{lemma}
\label{lem:B.4}
For $m=1,2$, and $\alpha\in(0,1]$, we have
\begin{align}
\label{eq:B.2}
H^m\cap\scF(H^\alpha)\subset W_m\subset H^m(\R^d).
\end{align}
\end{lemma}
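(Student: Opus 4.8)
\textbf{Proof plan for Lemma \ref{lem:B.4}.}
The statement is a pair of inclusions $H^m\cap\scF(H^\alpha)\subset W_m\subset H^m(\R^d)$, and the right-hand inclusion is immediate from the definitions \eqref{eq:1.6} and \eqref{eq:1.7} together with Lemma \ref{lem:B.2}, since $W_m=H^m\cap X_m$. So the content is the left-hand inclusion. Fix $\varphi\in H^m(\R^d)\cap\scF(H^\alpha)$. By Lemma \ref{lem:B.2} it suffices to show $\varphi\in X_m$, i.e. $A_m(|\varphi|)\in L^1(\R^d)$, and by \eqref{eq:B.1} (equivalently by the argument in Lemma \ref{lem:B.2}) this is the same as checking $F_m(|\varphi|)\in L^1(\R^d)$; concretely, that $|\varphi|^2\log(|\varphi|^2)\in L^1$ for $m=1$ and $\varphi\log(|\varphi|^2)\in L^2$ for $m=2$. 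The only issue is the region where $|\varphi|$ is small, say $\{|\varphi|\le 1\}$, since on $\{|\varphi|\ge 1\}$ one has the polynomial bound $|F_m(|\varphi|)|\lesssim |\varphi|^{2+\delta}\in L^1$ from Sobolev embedding (this is exactly the $B_m$-type estimate already used).

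On $\{|\varphi|\le 1\}$ I would use the crude pointwise bound $\big||\varphi|^2\log(|\varphi|^2)\big|\lesssim |\varphi|^{2-\eta}$ for any fixed $\eta\in(0,1)$ (and similarly $|\varphi\log(|\varphi|^2)|\lesssim|\varphi|^{1-\eta}$ for $m=2$), valid since $s\mapsto s^{\eta}|\log s|$ is bounded near $0$. So it remains to show that the weight $\braket{x}^\alpha\varphi\in L^2$ forces $\varphi\in L^{p}$ for some $p<2$ on the region where $\varphi$ is small — more precisely that $\int_{|\varphi|\le 1}|\varphi|^{2-\eta}<\infty$ for $\eta$ small depending on $\alpha$ and $d$. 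The key step is a Hölder argument: write
\begin{align*}
\int_{|\varphi|\le 1}|\varphi|^{2-\eta}
=\int_{|\varphi|\le 1}\big(\braket{x}^{\alpha}|\varphi|\big)^{2-\eta}\braket{x}^{-\alpha(2-\eta)}
\le\Big(\int \braket{x}^{2\alpha}|\varphi|^2\Big)^{\frac{2-\eta}{2}}\Big(\int\braket{x}^{-\frac{2\alpha(2-\eta)}{\eta}}\Big)^{\frac{\eta}{2}}.
\end{align*}
The first factor is finite because $\varphi\in\scF(H^\alpha)$. The second factor is finite provided $\frac{2\alpha(2-\eta)}{\eta}>d$, i.e. provided $\eta<\frac{4\alpha}{d+2\alpha}$, which is a nonempty range of admissible $\eta\in(0,1)$ for every $\alpha>0$ and every $d$. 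This shows $|\varphi|^2\log(|\varphi|^2)\in L^1$ on $\{|\varphi|\le1\}$, and for $m=2$ the identical computation with exponent $1-\eta$ in place of $2-\eta$ (choosing $\eta<\frac{2\alpha}{d+\alpha}$) gives $\int_{|\varphi|\le1}|\varphi|^{2-2\eta}<\infty$, hence $\varphi\log(|\varphi|^2)\in L^2$ near the zero set.

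Combining the two regions gives $F_m(|\varphi|)\in L^1(\R^d)$, hence $\varphi\in W_m$ by Lemma \ref{lem:B.2}, proving the first inclusion; the second inclusion is trivial. The only mild obstacle is bookkeeping the exponents so that the tail integral $\int\braket{x}^{-\beta}<\infty$ (requiring $\beta>d$) is compatible with $\eta\in(0,1)$ and with the boundedness of $s^\eta|\log s|$ near $0$; as shown above this always works since one may take $\eta$ as small as desired. I would also remark (as the paper does, referencing Lemma \ref{lem:B.5}) that the inclusions are in fact strict, but that is not part of this lemma's statement.
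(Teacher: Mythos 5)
Your proof is correct and follows essentially the same route as the paper: bound $|F_m(|\varphi|)|$ by $|\varphi|^{2-\eta}+|\varphi|^{2+\delta}$, handle the superquadratic part by Sobolev embedding, and control the subquadratic part through $\braket{x}^\alpha\varphi\in L^2$; the only difference is that you prove the weighted interpolation step directly by H\"older with the weight $\braket{x}^{-\alpha(2-\eta)}$, whereas the paper simply cites inequality (2.3) of \cite{CG18}. (One minor bookkeeping slip: for $m=2$ the admissible range is $\eta<2\alpha/(d+2\alpha)$ rather than $2\alpha/(d+\alpha)$, which is immaterial since only smallness of $\eta$ is needed.)
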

\begin{proof}
For $\varphi\in H^m\cap\scF(H^\alpha)$, we have
\begin{align*}
\int |F_m(\varphi)|\cleq  \int|\varphi|^{2-\delta}+\int|\varphi|^{2+\delta}\quad\text{for any small $\delta>0$}.
\end{align*}
The first term on the right-hand side is controlled by the inequality
\begin{align*}
\int|\varphi|^{2-\delta}\cleq \norm[\varphi]_{L^2}^{2-\delta-\frac{d\delta}{2\alpha}}\norm[x^\alpha \varphi]_{L^2}^{\frac{d\delta}{2\alpha}},
\end{align*}
see, e.g., (2.3) in \cite{CG18}. Therefore, along with Sobolev's embedding, we deduce that
\begin{align*}
\int |F_m(\varphi)|\cleq \norm[\braket{x}^\alpha \varphi]_{L^2}^{2-\delta}+\norm[\varphi]_{H^m}^{2+\delta},
\end{align*} 
which implies the claim.
\end{proof}
Next we prove the following result.
\begin{lemma}
\label{lem:B.5}
\begin{align}
\label{eq:B.3}
\braket{x}^{-d/2}\left(\log\braket{x}\r)^{-\rho}
\in
\left\{
\begin{aligned}
&L^1_{\rm loc}(\R^d)\setminus L^2(\R^d)&&\text{if}~0<\rho\le1/2,
\\
&H^2(\R^d)\setminus W_1&&\text{if}~1/2<\rho\le1,
\\
&(H^2(\R^d)\cap W_1)\setminus W_2&&\text{if}~1<\rho\le3/2,
\\
&W_2\setminus \bigcup_{\alpha\in(0,1]}\scF(H^\alpha)&&\text{if}~\rho>3/2.
\end{aligned}
\r.
\end{align}
In particular each inclusion in \eqref{eq:B.2} is strict.
\end{lemma}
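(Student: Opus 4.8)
The plan is to analyze the function $f_\rho(x) \ce \braket{x}^{-d/2}(\log\braket{x})^{-\rho}$ regime by regime, checking membership in each of the spaces $L^1_{\rm loc}$, $L^2$, $H^2$, $W_1$, $W_2$ and $\bigcup_{\alpha}\scF(H^\alpha)$ via direct integral estimates in polar (or dyadic) coordinates. The local behaviour near the origin is harmless since $f_\rho$ is smooth and bounded there, so everything reduces to the decay at infinity. For the $L^p$-type conditions one passes to radial variables: with $r=|x|$ large, $f_\rho(x)\approx r^{-d/2}(\log r)^{-\rho}$, so $\int_{|x|\ge 2}|f_\rho|^q\,dx \approx \int_2^\infty r^{-dq/2}(\log r)^{-\rho q}\,r^{d-1}\,dr$. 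For $q=2$ the radial exponent is $r^{-1}$, and the integral $\int^\infty r^{-1}(\log r)^{-2\rho}\,dr$ converges iff $2\rho>1$, i.e. $\rho>1/2$; this immediately gives $f_\rho\notin L^2$ when $\rho\le 1/2$ and $f_\rho\in L^2$ when $\rho>1/2$. Membership in $L^1_{\rm loc}$ is trivial. This settles the first line of \eqref{eq:B.3}.

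Next I would handle the Sobolev regularity. For $\rho>1/2$ one checks $f_\rho\in H^2$ by differentiating: each derivative $\partial^\gamma f_\rho$ with $|\gamma|\le 2$ is, at large $r$, of the same order $r^{-d/2}(\log r)^{-\rho}$ up to lower-order logarithmic corrections (differentiating $\braket{x}^{-d/2}$ produces extra powers of $\braket{x}^{-1}$, which only help, and differentiating the log factor produces extra negative powers of $\log\braket{x}$, which also help). Hence $f_\rho\in H^2$ for all $\rho>1/2$, and in particular for all the ranges $1/2<\rho\le1$, $1<\rho\le3/2$, $\rho>3/2$. Then the key computations are the Orlicz-type integrals. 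Using Lemma~\ref{lem:B.2}, membership in $W_m$ is equivalent (given $f_\rho\in H^m$) to $F_m(|f_\rho|)\in L^1$. For $W_1$: $F_1(|f_\rho|) = -|f_\rho|^2\log(|f_\rho|)^2 \approx r^{-d}(\log r)^{-2\rho}\cdot(\log r)$ at large $r$ (since $-\log|f_\rho|^2 \approx d\log r$ up to lower order), so $\int F_1(|f_\rho|)\,dx \approx \int_2^\infty r^{-1}(\log r)^{1-2\rho}\,dr$, which converges iff $2\rho-1>1$, i.e. $\rho>1$. This gives $f_\rho\notin W_1$ for $1/2<\rho\le1$ and $f_\rho\in W_1$ for $\rho>1$, matching the second and third lines. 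For $W_2$: $F_2(|f_\rho|) = |f_\rho|^2(\log|f_\rho|^2)^2 \approx r^{-d}(\log r)^{-2\rho}\cdot(\log r)^2$, so $\int F_2(|f_\rho|)\,dx \approx \int_2^\infty r^{-1}(\log r)^{2-2\rho}\,dr$, convergent iff $2\rho-2>1$, i.e. $\rho>3/2$. This gives $f_\rho\notin W_2$ for $1<\rho\le3/2$ and $f_\rho\in W_2$ for $\rho>3/2$, finishing lines three and four.

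Finally, for the last line I must show $f_\rho\notin\scF(H^\alpha)$ for \emph{every} $\alpha\in(0,1]$, i.e. $\braket{x}^\alpha f_\rho\notin L^2$ for each such $\alpha$. But $\braket{x}^\alpha f_\rho(x)\approx r^{\alpha - d/2}(\log r)^{-\rho}$, so $\int_{|x|\ge2}|\braket{x}^\alpha f_\rho|^2\,dx \approx \int_2^\infty r^{2\alpha-1}(\log r)^{-2\rho}\,dr$, and since $2\alpha>0$ the factor $r^{2\alpha-1}$ makes the integrand grow like $r^{2\alpha-1}$, so the integral diverges regardless of $\rho$ and $\alpha>0$. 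Hence $f_\rho\notin\scF(H^\alpha)$ for all $\alpha\in(0,1]$, while $f_\rho\in W_2\subset H^2$ by the previous step; combined with Lemma~\ref{lem:B.4} this proves the strictness of the inclusion $H^m\cap\scF(H^\alpha)\subsetneq W_m$. Strictness of $W_m\subsetneq H^m$ follows from the line $1/2<\rho\le1$ (a function in $H^2\subset H^1$ but not in $W_1\supset W_2$, hence not in $W_2$ either, so also $W_2\subsetneq H^2$ using any $\rho\in(1,3/2]$). The only mildly delicate point — and the main thing to be careful about — is making the heuristic ``$-\log|f_\rho(x)|^2 \approx d\log\braket{x}$ at large $|x|$'' rigorous, i.e. controlling the logarithmic correction coming from the $(\log\braket{x})^{-\rho}$ factor inside the outer logarithm; this is handled by noting $\log|f_\rho(x)| = -\tfrac d2\log\braket{x} - \rho\log\log\braket{x}$ exactly, so $(\log|f_\rho|^2)^k = (d\log\braket{x} + 2\rho\log\log\braket{x})^k$, and the cross terms only contribute integrals with strictly more log-decay than the leading $\int r^{-1}(\log r)^{k-2\rho}\,dr$, hence do not change the convergence threshold. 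Everything else is a routine comparison of one-dimensional integrals $\int_2^\infty r^{-1}(\log r)^{-\beta}\,dr$, finite iff $\beta>1$.
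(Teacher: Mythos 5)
Your proposal is correct and follows essentially the same route as the paper: reduce everything to the tail behaviour of $f_\rho$, use the exact identity $\log|f_\rho|=-\tfrac d2\log\braket{x}-\rho\log\log\braket{x}$ to get the radial integrals $\int_2^\infty r^{-1}(\log r)^{k-2\rho}\,dr$ with thresholds $\rho>1/2,\,1,\,3/2$, and rule out $\scF(H^\alpha)$ because the exponent $2\alpha-1>-1$ defeats any logarithmic factor. The only cosmetic difference is that the paper passes through the Orlicz sets $X_m$ via \eqref{eq:B.1}, while you invoke the definition of $W_m$ directly once $f_\rho\in H^2$ is known, which is the same substance.
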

\begin{proof}
We set
\begin{align*}
f_\rho (x)=\braket{x}^{-d/2}\left(\log\braket{x}\r)^{-\rho}\quad\text{for}~\rho>0.
\end{align*}
We note that
\begin{align*}
\int |f_\rho|^2 \sim \int_2^\infty \frac{dr}{r(\log r)^{2\rho}}
\left\{
\begin{aligned}
&=\infty&&\text{if}~0<\rho\le1/2,
\\
&<\infty&&\text{if}~\rho>1/2,
\end{aligned}
\r.
\end{align*}
which yields that
\begin{align}
\label{eq:B.4}
f_\rho \in
\left\{ 
\begin{aligned}
&L^1_{\rm loc}(\R^d)\setminus L^2(\R^d) &&\text{if}~0<\rho\le1/2,
\\
&L^2(\R^d)&&\text{if}~\rho>1/2.
\end{aligned}
\r.
\end{align}

One can easily verify that $\nabla f_\rho,\,\Delta f_\rho\in L^2(\R^d)$ for any $\rho>0$, so we obtain that $f_\rho\in H^2(\R^d)$ if $\rho>1/2$.
Combining this with \eqref{eq:B.1}, we deduce that
\begin{align}
\label{eq:B.5}
\text{if}~\rho>1/2,\quad f_\rho\in X_m {\iff} F_m(|f_\rho|)\in L^1(\R^d)
\end{align}
for $m=1,2$.
A direct calculation shows that
\begin{align*}
|f_\rho|^2\abs[\log(|f_\rho|^2) ]
 &=\braket{x}^{-d}\left(\log\braket{x}\r)^{-2\rho}
\abs[ -d\log\braket{x}-2\rho\log\log\braket{x} ]
\\
&\sim \braket{x}^{-d}\left(\log\braket{x}\r)^{-2\rho+1}\quad\text{as}~|x|\to\infty,
\end{align*}
which implies that
\begin{align*}
\int |f_\rho|^2\abs[\log(|f_\rho|^2)] \sim\int_{2}^\infty \frac{dr}{r(\log r)^{2\rho -1}}
\left\{
\begin{aligned}
&=\infty&&\text{if}~0<\rho\le1,
\\
&<\infty&&\text{if}~\rho>1.
\end{aligned}
\r.
\end{align*}
Similarly, we have
\begin{align*}
|f_\rho|^2\left(\log|f_\rho|^2\r)^2
&=\braket{x}^{-d} ( \log\braket{x})^{-2\rho}
\bigl( -d\log\braket{x}-2\rho\log\log\braket{x}\bigr)^2
\\
&\sim \braket{x}^{-d}\left(\log\braket{x}\r)^{-2\rho+2}\quad
\text{as}~|x|\to\infty,
\end{align*}
which implies that
\begin{align*}
\int |f_\rho|^2\left(\log(|f_\rho|^2) \r)^2\sim\int_{2}^\infty \frac{dr}{r(\log r)^{2\rho-2}}
\left\{
\begin{aligned}
&=\infty&&\text{if}~0<\rho\le3/2,
\\
&<\infty&&\text{if}~\rho>3/2.
\end{aligned}
\r.
\end{align*}
Hence, it follows from \eqref{eq:B.5} that
\begin{align}
\label{eq:B.6}
f_\rho\in
\left\{ 
\begin{aligned}
&L^2(\R^d)\setminus X_1 &&\text{if}~1/2<\rho\le1,
\\
&X_1\setminus X_2&&\text{if}~1<\rho\le3/2,
\\
&X_2&&\text{if}~\rho>3/2.
\end{aligned}
\r.
\end{align}

Finally we check the relation between $\scF(H^\alpha)$ and $X_2$. 
For any small $\alpha\in(0,1]$, we have
\begin{align*}
\braket{x}^{2\alpha}|f_\rho|^2=\braket{x}^{-d+2\alpha}\left(\log\braket{x}\r)^{-2\rho}\cgeq \braket{x}^{-d+\alpha},
\end{align*}
which implies that $f_\rho\notin \scF(H^\alpha)$. 
Therefore, we obtain
\begin{align}
\label{eq:B.7}
f_\rho \in X_2\setminus\bigcup_{\alpha\in(0,1]}\scF(H^\alpha)
\quad\text{if}~\rho>3/2.
\end{align}
Hence, the result follows from \eqref{eq:B.4}, \eqref{eq:B.6}, \eqref{eq:B.7}, and Lemma \ref{lem:B.2}.
\end{proof}

\section*{Acknowledgments}

M.H. is supported by JSPS KAKENHI Grant Number JP22K20337 and by the Italian MIUR PRIN project 2020XB3EFL. T.O. is supported by JSPS KAKENHI Grant Numbers 18KK073 and 19H00644.


\end{document}